\documentclass[preprint,11pt]{elsarticle}

\usepackage[utf8]{inputenc}

\usepackage{geometry}
\usepackage{amsmath}
\usepackage{amssymb}
\usepackage{amsthm}
\usepackage{enumerate}
\usepackage{tikz}
\usepackage{graphicx}
\usepackage{xcolor}

\newtheorem{teo}{Theorem}[section]
\newtheorem{remark}[teo]{Remark}
\newtheorem{lemma}[teo]{Lemma}
\newtheorem{prop}[teo]{Proposition}
\newtheorem{cor}[teo]{Corollary}
\newtheorem{definition}[teo]{Definition}

\usepackage{hyperref}
\hypersetup{hidelinks,
  pdftitle={Crossover asymptotics and a sharp confinement rate for the viscous Burgers equation},
  pdfauthor={Maicon Sonego and Enrique Zuazua},
  pdfkeywords={viscous Burgers equation, crossover asymptotics, Hopf--Cole transformation, conservative boundary conditions, sharp confinement rate, variable diffusion}
}

\journal{Journal of Differential Equations}

\begin{document}

\begin{frontmatter}

\title{Crossover asymptotics and a sharp confinement rate for the
viscous Burgers equation}

\author[itajuba]{Maicon Sonego\corref{cor1}}
 \ead{mcn.sonego@unifei.edu.br}  

\author[fau,deusto,uam]{Enrique Zuazua}
\ead{enrique.zuazua@fau.de}
\cortext[cor1]{Corresponding author.}

\affiliation[itajuba]{organization={Instituto de Matem\'atica e
    Computa\c{c}\~ao, Universidade Federal de Itajub\'a},
  city={Itajub\'a-MG},
  country={Brazil}}

\affiliation[fau]{organization={Chair for Dynamics, Control, Machine
    Learning and Numerics (Alexander von Humboldt Professorship),
    Department of Mathematics,
    Friedrich--Alexander-Universit\"at Erlangen--N\"urnberg},
  addressline={Cauerstr.~11},
  postcode={91058},
  city={Erlangen},
  country={Germany}}

\affiliation[deusto]{organization={Chair of Computational Mathematics,
    University of Deusto},
  postcode={48007},
  city={Bilbao, Basque Country},
  country={Spain}}

\affiliation[uam]{organization={Departamento de Matem\'aticas,
    Universidad Aut\'onoma de Madrid},
  postcode={28049},
  city={Madrid},
  country={Spain}}

\begin{abstract}
We study the one-dimensional viscous Burgers equation on $(0,L)$ with
the conservative boundary conditions $u_x=-u^2$.  On the half-line, solutions
approach a nonlinear self-similar profile, whereas on a bounded interval they
converge to a nonconstant equilibrium.  We describe explicitly how the
dynamics passes between these states at the critical diffusive scale
$t=cL^2$.  For compactly supported initial data of mass $M$, the rescaled
solution converges as $L\to\infty$ to an explicit crossover profile $\Phi_c$.
In similarity variables, $\Phi_c$ converges to the half-line profile $f_M$ as
$c\downarrow0$; after rescaling to domain variables, it converges to the
interval equilibrium as $c\to\infty$.  We also determine the sharp onset of
confinement:
\[
  \lim_{c\downarrow0}-c\log|\Phi_c(0)-f_M(0)|=1\qquad(M\ne0).
\]
Moreover, on compact sets in similarity variables, the interval and
half-line solutions differ in $C^k$ by at most
$C_{k,\varepsilon}\exp(-(1-\varepsilon)L^2/t)$, uniformly in $L$, and the
exponential constant is optimal.  Thus we identify not only the transition
scale $L^2$, but also the profile governing the crossover and the sharp rate
at which the remote boundary becomes visible.  We finally discuss the
conclusions that persist for space-dependent diffusivity and illustrate the
three asymptotic regimes numerically.
\end{abstract}

\begin{keyword}
viscous Burgers equation \sep crossover asymptotics \sep
Hopf--Cole transformation \sep conservative boundary conditions \sep
sharp confinement rate \sep variable diffusion
\MSC[2020] 35B40 \sep 35K58 \sep 35Q35 \sep 35C06
\end{keyword}

\end{frontmatter}

\section{Introduction}

A fundamental feature of diffusive equations is that the large-time behavior of solutions depends crucially on the geometry of the underlying domain. In unbounded domains, solutions typically spread and converge, after suitable rescaling, to self-similar profiles that are universal up to conserved quantities. In bounded domains, confinement eventually takes over and drives the solution toward a stationary state. When the domain is large but finite, these two mechanisms compete. That the diffusive time $t\sim L^{2}$ separates the two regimes is classical. Understanding the nature of the transition at this scale is the central question we address.

A simple and instructive example is the linear heat equation with homogeneous Neumann boundary conditions,
\[
v_t = v_{xx}, \qquad (x,t)\in(0,L)\times(0,\infty), \qquad v_x(0,t)=v_x(L,t)=0,
\]
which conserves mass: $\int_0^L v(x,t)\,dx = M$. On the half-line with Neumann condition at $x=0$, the solution converges, after rescaling, to the Gaussian heat kernel. On the bounded interval, the solution converges exponentially to the spatial average $M/L$. Crucially, before the boundary is felt, the solution on $(0,L)$ is exponentially close to the half-line solution for times $t \ll L^2$; only for $t \gg L^2$ does confinement dominate. This two-scale behavior is a manifestation of the classical \emph{principle of not feeling the boundary}, introduced by Kac \cite{Kac1951}, proved for the heat kernel by Ciesielski \cite{Ciesielski1966}, and later quantified with an explicit rate by van den Berg \cite{vandenBerg1989} (see also Davies \cite{Davies1989}): for the heat equation, the influence of the boundary is exponentially small for times $t \ll \operatorname{dist}(x,\partial\Omega)^2$.

Classical stabilization theory for expanding parabolic domains establishes
qualitatively that a remote boundary is not felt and that the diffusive scale
governs the competition \cite{Ushakov1980,Mukminov1980,Ilin1985,Gushchin1984}.
Here we identify the crossover profile itself, the two nonlinear states it
connects, and the sharp confinement exponent.

In this work, we study the viscous Burgers equation with conservative boundary conditions,
\[
u_t - u_{xx} = (u^2)_x, \qquad (x,t)\in(0,L)\times(0,\infty),
\]
with
\[
u_x(0,t) = -u^2(0,t), \qquad u_x(L,t) = -u^2(L,t).
\]
Written conservatively the equation reads $u_t-\partial_x(u_x+u^2)=0$, so the flux is $-(u_x+u^2)$ and these conditions are precisely the statement that it vanishes at the boundaries; mass is therefore conserved. In the literature, such conditions are sometimes referred to as \emph{no-flux boundary conditions} in the context of the conservation-law formulation \cite{Watanabe2016}; they arise naturally in applications such as the complete separation of fluids in confined geometries. Throughout this paper, we adopt the term \emph{conservative} to highlight the mass-conservation property that is central to our analysis.

The conservative structure provides a common parameter: the total mass $M = \int_0^L u(x,t)\,dx$ is conserved in both the bounded-interval and half-line problems. As recalled in Section~\ref{FI}, on the half-line solutions converge in self-similar variables to a non-Gaussian profile $f_M$ parametrized by $M$, while on the bounded interval they converge exponentially to a non-constant stationary profile $U_M^L$, also parametrized by $M$. The question we address is: how does the bounded-interval dynamics pass from one to the other?

Our contribution has three parts. First, we construct an explicit family
$\{\Phi_c\}_{c>0}$ governing the simultaneous limit $L\to\infty$,
$t=cL^2$. Second, we prove that this family connects the half-line
self-similar state $f_M$ to the finite-interval equilibrium $\mathcal{U}_M$ in their
respective natural variables. Third, we determine the sharp rate at which the
remote boundary becomes visible. More precisely, for compactly supported
initial data, and with $M\ne0$ for the nontrivial lower bound,
\[
\lim_{c\to0^{+}}\,-c\ln\big|\Phi_c(0)-f_M(0)\big| = 1,
\]
and the corresponding comparison with the half-line problem is
\[
\| v^L(\cdot,t) - v^\infty(\cdot,t) \|_{C^k([0,R])} \le C_{k,\varepsilon}\, e^{-(1-\varepsilon)/c},
\qquad t=cL^{2},
\]
on compact sets in similarity variables, uniformly in $L$. The optimal
constant $1$ is the Gaussian cost of the image charge at $x=2L$. Hence the
result resolves the dynamics at the diffusive scale $t\sim L^2$, rather than
merely locating the scale at which confinement begins.

The Hopf--Cole transformation \cite{hopf,cole} maps the equation exactly
to a heat equation with constant Dirichlet data. The boundary-shadowing
mechanism is therefore linear, but the states it connects are nonlinear: $f_M$
is non-Gaussian, $\mathcal{U}_M$ is nonconstant, and $\Phi_c$ deforms one into the
other. Recovering the result for $u=w_x/w$ also requires uniform control of
this quotient and its derivatives on the region $x=O(\sqrt t)$.

Two earlier works address the Burgers equation on a bounded interval and deserve explicit comparison. Kreiss and Kreiss \cite{Kreiss1986} prove convergence to a unique steady state for the initial--boundary value problem and show that the rate of convergence depends on the boundary conditions and may be exponentially slow; Bertini and Ponsiglione \cite{BertiniPonsiglione2012} characterize the stationary solutions variationally, through a Lyapunov functional arising as the large-deviation rate of an associated interacting particle system, and analyze its behavior as the interval length diverges. Both concern the stationary regime $t\gg L^2$ and inhomogeneous Dirichlet data. The conservative boundary conditions considered here produce, after Hopf--Cole, the specific Dirichlet data $w(0,t)=1$, $w(L,t)=e^{M}$, so that the mass $M$ is conserved and parametrizes the stationary profile $U_M^L$; what is new here is not the stationary regime but the diffusive scale $t\sim L^{2}$ and the explicit family linking $f_M$ and $\mathcal U_M$ in their respective natural coordinates.

The distinction from the closest results can be summarized as follows.
The classical ``not feeling the boundary'' literature
\cite{Kac1951,Ciesielski1966,vandenBerg1989,Davies1989} estimates linear heat
kernels or solutions locally before a boundary is reached; it does not identify
the nonlinear profile obtained when $t/L^2$ has a nonzero limit.
Watanabe et al.\ \cite{Watanabe2016} provide well-posedness and
large-time information for the no-flux Burgers problem, while Biler--Karch
\cite{Karch2000} study half-line asymptotics for a Neumann problem on
$(0,\infty)$, in spirit close to Section~\ref{FI}; what is specific to the
conservative condition is the explicit form of the profile $f_M$.  To the best
of our knowledge, none of these results combines
the simultaneous limit $L\to\infty$, $t=cL^2$, the explicit crossover family,
and a matching sharp finite-domain estimate.  These are precisely the contents of Theorem~\ref{thm:intro-main}.

The transient lasts for the natural diffusive time $t\sim L^2$ and is an
instance of Barenblatt's intermediate asymptotics
\cite{BarenblattZeldovich1972,barenblatt1996}.  It is distinct from the
small-viscosity metastability and exponentially slow shock motion studied in
\cite{KT01,BW09,ReynaWard1995,Kreiss1986,MasciaStrani2013}; see
Remark~\ref{rem:notmetastable}.

The following statement gathers the three limits and the uniform
comparison estimate proved separately in Section~\ref{sm}.

\begin{teo}[Main crossover theorem]\label{thm:intro-main}
Let $u_0\in L^1(0,\infty)$ be supported in $[0,R_0]$, with mass $M$, and
let $u^L$ and $u^\infty$ be the corresponding Hopf--Cole solutions on $(0,L)$
and $(0,\infty)$.  Set
\[
 v^L(\xi,t)=\sqrt t\,u^L(\xi\sqrt t,t),\qquad
 v^\infty(\xi,t)=\sqrt t\,u^\infty(\xi\sqrt t,t).
\]
There is an explicit family $\{\Phi_c\}_{c>0}$, defined in
\eqref{eq:Phic}, such that:
\begin{enumerate}
\item for every fixed $c>0$, $k\ge0$, and
$R<c^{-1/2}$,
\[
 \|v^L(\cdot,cL^2)-\Phi_c\|_{C^k([0,R])}=O(L^{-1});
\]
\item for every fixed $R>0$ and $k\ge0$,
$\Phi_c\to f_M$ in $C^k([0,R])$ as $c\downarrow0$, and, if $M\ne0$,
\[
 \lim_{c\downarrow0}-c\log|\Phi_c(0)-f_M(0)|=1;
\]
\item in domain variables,
\[
 \frac1{\sqrt c}\Phi_c\!\left(\frac{y}{\sqrt c}\right)
 \longrightarrow \mathcal U_M(y)
 \quad\text{uniformly for }y\in[0,1]\quad(c\to\infty);
\]
\item for every $R>0$, $k\ge0$, and $\varepsilon\in(0,1)$, there are
$C_{k,\varepsilon},L_0>0$, independent of $c$ and $L$, such that, whenever
$0<c<R^{-2}$, $t=cL^2$, and $L\ge L_0$,
\[
 \|v^L(\cdot,t)-v^\infty(\cdot,t)\|_{C^k([0,R])}
 \le C_{k,\varepsilon} e^{-(1-\varepsilon)/c}.
\]
The exponential constant $1$ is optimal when $M\ne0$.
\end{enumerate}
\end{teo}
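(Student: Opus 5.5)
We reduce everything to the linear heat equation through the Hopf--Cole transformation. Set $w(x,t)=\exp\big(\int_0^x u(s,t)\,ds\big)$, so that $u=w_x/w$ and $w_t=w_{xx}$. The conservative conditions $u_x+u^2=0$ at the endpoints become $w_{xx}=0$ there, hence $w_t=0$. Since $w(0,t)=1$ and mass is conserved, the boundary data are $w(0,t)=1$ and $w(L,t)=e^{M}$, with initial datum $w_0(x)=\exp(\int_0^x u_0)$. This $w_0$ equals $e^{M}$ for $x\ge R_0$.

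Write $w^L=\ell^L+h^L$ with $\ell^L(x)=1+(e^M-1)x/L$. Then $h^L$ solves the Dirichlet heat equation on $(0,L)$ with datum $h_0^L=w_0-\ell^L$. On the half-line write $w^\infty=1+h^\infty$, where $h^\infty$ is given by the odd reflection of $w_0-1$. The interval solution is represented by the method of images, with reflections about $x=0$ and $x=L$:
\[
h^L(x,t)=\sum_{n\in\mathbb Z}\int_0^L\big[G(x-y-2nL,t)-G(x+y-2nL,t)\big]h_0^L(y)\,dy .
\]
In the variables $x=\xi\sqrt t$, $t=cL^2$, each image term becomes an explicit expression in erf/Gaussian functions of $\xi$, $\sqrt c$ and $n$. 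The datum $h_0^L$ is $w_0-1$ near the origin minus a linear ramp of slope $(e^M-1)/L$. Passing to the limit $L\to\infty$, the part supported in $[0,R_0]$ concentrates and contributes a dipole-like term of size $O(R_0/\sqrt t)=O(L^{-1})$. The ramp term converges to an explicit theta-function series $\Theta_c(\xi)$.

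We define $W_c$ as the resulting limit of $w^L(\xi\sqrt t,t)$, namely $1$ plus the limit of the step $(e^M-1)\mathbf 1_{y>R_0}$ propagated by the image sum. We then set $\Phi_c=W_c'/W_c$, which I expect to match \eqref{eq:Phic}. Item 1 follows by differentiating the image series termwise. All derivatives in $\xi$ of the difference are $O(L^{-1})$ on $[0,R]$, and a positive lower bound on $W_c$ lets us control the quotient in $C^k$. The restriction $R<c^{-1/2}$ keeps $x<L$, so the points stay inside the domain.

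For item 2 we compare $W_c$ with the half-line profile $W_0(\xi)=1+(e^M-1)\operatorname{erf}(\xi/2)$, whose logarithmic derivative is $f_M$. The nearest image charge sits at distance $2L$, i.e. at $2/\sqrt c$ in $\xi$. Its contribution is of order $\exp(-(2/\sqrt c-\xi)^2/4)$ up to polynomial factors, which at $\xi=0$ is $e^{-1/c}$ times powers of $c$. All other images are smaller, by at least a factor $e^{-c^{-1}\cdot\mathrm{const}}$. Hence $-c\log|\Phi_c(0)-f_M(0)|\to1$, provided the leading coefficient does not vanish. That coefficient is proportional to $(e^M-1)$ times a nonzero factor, which is where $M\ne0$ enters. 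The exact value at $\xi=0$ needs a careful computation, since $W_c(0)=1$ and the difference appears only in $W_c'(0)$.

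For item 3 we use $y=\xi\sqrt c\in[0,1]$ and $c\to\infty$. The Fourier-sine form of the Dirichlet series decays like $e^{-\pi^2 c}$, so $W_c(y/\sqrt c)$ converges to the linear profile $1+(e^M-1)y$. Its logarithmic derivative is $\mathcal U_M$ (presumably defined this way in Section~\ref{FI}).

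Item 4 repeats the image estimate at finite $L$. The difference $w^L-w^\infty$ equals the sum of image terms with $n\ne0$ applied to $w_0-1$, plus the correction from the ramp and the right boundary. On $x\le R\sqrt t<L$, each of these is bounded by $C_k e^{-(2L-x-R_0)^2/(4t)}$. This is at most $C_{k,\varepsilon}e^{-(1-\varepsilon)/c}$ uniformly once $L\ge L_0$: the loss from $x$ and $R_0$ is absorbed into $\varepsilon$ because $\xi\le R$ while $1/\sqrt c$ is large.

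\textbf{Main obstacle.} The absorption above fails when $c$ is not small, since $\xi\le R$ with $R\sqrt c<1$ still allows $x$ close to $L$. I expect this uniform-in-$c$ bookkeeping to be the hardest step. I would handle it by splitting into $c<\delta(\varepsilon)$ and $c\ge\delta$. In the second range the bound is trivial from uniform $C^k$ bounds, but $x$ near $L$ needs care. Optimality of the constant follows from item 2 together with item 1.
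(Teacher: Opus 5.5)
Items 1--3 are sound and close to the paper's route: the paper works in domain variables with $\Psi=1+(e^M-1)\varphi$ and obtains your theta series by Poisson summation. Item 4, however, has a genuine gap. After splitting $w^L-w^\infty$ into the $n\ne0$ images of $w_0-1$ plus the ramp/right-boundary correction, you assert that \emph{each} piece is $O\big(e^{-(2L-x-R_0)^2/(4t)}\big)$. This is false.

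\begin{itemize}
\item The $n=1$ image contains the reflection across $x=L$ of the tail $w_0-1=e^M-1$ near $y=L$. That reflection lies at distance only $L-x$ from $x$.
\item The truncation of the half-line integral at $y=L$ and the ramp/right-boundary correction also sit at distance $L-x$.
\end{itemize}

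Each of these pieces is of size $e^{-(L-x)^2/(4t)}$ up to polynomial factors, i.e.\ $e^{-1/(4c)}$ at $\xi=0$. Estimated termwise, they give essentially the maximum-principle exponent $L^2/4t$ mentioned before Lemma~\ref{lem:image}, that is, $1/4$ instead of $1$.

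The exponent $1$ appears only after an exact cancellation among these pieces. The reason is that $w_0(L)=e^M$ already matches the boundary value, so the right endpoint acts only through the defect $\beta(s)=e^M-w^\infty(L,s)=O\big(e^{-(L-R_0)^2/(4s)}\big)$ of Lemma~\ref{lem:boundary-tail}. This is the heart of the paper's proof:
\begin{itemize}
\item $z=w^L-w^\infty$ has zero initial datum, $z(0,s)=0$ and $z(L,s)=\beta(s)$;
\item Duhamel with the boundary-kernel bound $|N(r,x)|\lesssim(L-x)r^{-3/2}e^{-(L-x)^2/(4r)}$, combined with $a^2/s+b^2/(t-s)\ge(a+b)^2/t$, yields $|z|\le C_\theta e^{-\theta(2L-R_0-x)^2/(4t)}$;
\item interior parabolic estimates on cylinders of size $\sim\sqrt t$, with odd reflection at $x=0$, supply the derivatives.
\end{itemize}
By contrast, the step you single out as the main obstacle is routine. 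For $c\in[c_2,R^{-2})$ the uniform bounds $t^{j/2}\big(|\partial_x^jw^L|+|\partial_x^jw^\infty|\big)\le C$ up to $x=L$ (Lemma~\ref{lem:moderate-c}) suffice, because $e^{-(1-\varepsilon)/c}$ is bounded below in that range.

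Your image framework can be repaired, and it then becomes more elementary than the paper's argument. One option is to keep $w$ and regroup. Let $\widetilde h$ be the odd $2L$-periodic extension of $w_0-\ell^L$ and $E$ the odd extension of $w_0-1$ to $\mathbb R$. Then $w^L-w^\infty=\int_{\mathbb R}G(x-y,t)D(y)\,dy$ with $D=\widetilde h+(e^M-1)y/L-E$, and $D$ vanishes identically on $|y|<2L-R_0$ while growing at most linearly. A simpler option is to apply images to $w_x$. It solves the \emph{Neumann} problem on $(0,L)$ and on $(0,\infty)$ with the same datum $w_0u_0$ supported in $[0,R_0]$, so
\[
w^L_x-w^\infty_x=\sum_{n\ne0}\int_0^{R_0}\big[G(x-y-2nL,t)+G(x+y-2nL,t)\big]\,w_0(y)u_0(y)\,dy .
\]
Every term sits at distance at least $2L-x-R_0$ and no cancellation is needed. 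Then $z=\int_0^x(w^L_x-w^\infty_x)$, and the quotient $u=w_x/w$ is handled as in your item 1.

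Either repair gives the full exponent $(2L-R_0-x)^2/4t$ with only polynomial prefactors; the Hermite factors contribute $c^{-k/2}$, which is absorbed by $e^{\varepsilon/c}$. You avoid both the $\theta$-loss and the interior estimates. The paper's Duhamel route instead uses only a Gaussian bound on the boundary kernel plus the tail estimate for $\beta$, and it makes the outward-and-return path cost explicit.

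Two smaller points.
\begin{itemize}
\item In item 2 the nonvanishing at $\xi=0$ must be shown, not assumed. In theta form, $\Phi_c(0)-f_M(0)=(e^M-1)\frac{2}{\sqrt\pi}\sum_{\ell\ge1}e^{-\ell^2/c}$. All terms are positive, the images at $y=\pm2$ contribute equally, and no power of $c$ appears.
\item The optimality claim in item 4 needs more than items 1 and 2. You also need the half-line asymptotics $v^\infty(0,cL^2)\to f_M(0)$ as $L\to\infty$ (Theorem~\ref{thm:halflinerate}). Only then does a uniform bound with exponent $\gamma>1$ pass to $|\Phi_c(0)-f_M(0)|\le Ce^{-\gamma/c}$.
\end{itemize}
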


Figure~\ref{fig:crossover} illustrates the three windows and the role of $\Phi_c$ in the middle one.

\begin{figure}[htbp]
\centering
\begin{tikzpicture}[xscale=5.05, yscale=0.60, every node/.style={inner sep=1.7pt}]

\definecolor{diffcol}{RGB}{20,130,115}
\definecolor{confcol}{RGB}{195,85,45}

\shade[left color=white, right color=gray!14] (-0.86,-6.20) rectangle (-0.458,0.92);
\shade[left color=gray!14, right color=white] (-0.458,-6.20) rectangle (-0.06,0.92);

\draw[->,gray!65,line width=0.5pt] (-1.45,-6.20) -- (0.52,-6.20);
\draw[->,gray!65,line width=0.5pt] (-1.45,-6.20) -- (-1.45,0.95);
\node[anchor=west,font=\small,gray!85] at (0.55,-6.02) {$c=t/L^{2}$};
\node[gray!85,font=\footnotesize,anchor=south west] at (-1.43,0.98)
      {};

\foreach \x/\lab in {-1.301/{0.05},-1/{0.1},-0.523/{0.3},0/{1},0.301/{2}}{
  \draw[gray!65] (\x,-6.20) -- (\x,-6.34);
  \node[anchor=north,font=\footnotesize,gray!85] at (\x,-6.36) {$\lab$};}
\foreach \y/\lab in {0/{10^{0}},-2/{10^{-2}},-4/{10^{-4}},-6/{10^{-6}}}{
  \draw[gray!65] (-1.45,\y) -- (-1.475,\y);
  \node[anchor=east,font=\footnotesize,gray!85] at (-1.485,\y) {$\lab$};}

\draw[diffcol,line width=1.25pt] plot coordinates {(-1.1739,-6.1940) (-1.1681,-6.1087) (-1.1624,-6.0245) (-1.1566,-5.9415) (-1.1509,-5.8595) (-1.1451,-5.7787) (-1.1394,-5.6988) (-1.1336,-5.6201) (-1.1279,-5.5423) (-1.1221,-5.4656) (-1.1164,-5.3899) (-1.1106,-5.3152) (-1.1049,-5.2415) (-1.0991,-5.1688) (-1.0934,-5.0970) (-1.0876,-5.0261) (-1.0819,-4.9562) (-1.0761,-4.8872) (-1.0704,-4.8191) (-1.0646,-4.7519) (-1.0588,-4.6856) (-1.0531,-4.6202) (-1.0473,-4.5556) (-1.0416,-4.4919) (-1.0358,-4.4290) (-1.0301,-4.3669) (-1.0243,-4.3057) (-1.0186,-4.2453) (-1.0128,-4.1856) (-1.0071,-4.1268) (-1.0013,-4.0687) (-0.9956,-4.0114) (-0.9898,-3.9548) (-0.9841,-3.8990) (-0.9783,-3.8439) (-0.9726,-3.7895) (-0.9668,-3.7359) (-0.9611,-3.6830) (-0.9553,-3.6307) (-0.9496,-3.5792) (-0.9438,-3.5283) (-0.9381,-3.4781) (-0.9323,-3.4285) (-0.9265,-3.3796) (-0.9208,-3.3314) (-0.9150,-3.2838) (-0.9093,-3.2368) (-0.9035,-3.1904) (-0.8978,-3.1446) (-0.8920,-3.0995) (-0.8863,-3.0549) (-0.8805,-3.0109) (-0.8748,-2.9675) (-0.8690,-2.9247) (-0.8633,-2.8824) (-0.8575,-2.8407) (-0.8518,-2.7996) (-0.8460,-2.7590) (-0.8403,-2.7189) (-0.8345,-2.6793) (-0.8288,-2.6403) (-0.8230,-2.6018) (-0.8173,-2.5637) (-0.8115,-2.5262) (-0.8058,-2.4892) (-0.8000,-2.4527) (-0.7942,-2.4166) (-0.7885,-2.3810) (-0.7827,-2.3459) (-0.7770,-2.3113) (-0.7712,-2.2771) (-0.7655,-2.2433) (-0.7597,-2.2100) (-0.7540,-2.1772) (-0.7482,-2.1447) (-0.7425,-2.1127) (-0.7367,-2.0812) (-0.7310,-2.0500) (-0.7252,-2.0192) (-0.7195,-1.9889) (-0.7137,-1.9589) (-0.7080,-1.9294) (-0.7022,-1.9002) (-0.6965,-1.8714) (-0.6907,-1.8430) (-0.6850,-1.8150) (-0.6792,-1.7873) (-0.6735,-1.7600) (-0.6677,-1.7331) (-0.6619,-1.7065) (-0.6562,-1.6802) (-0.6504,-1.6543) (-0.6447,-1.6288) (-0.6389,-1.6036) (-0.6332,-1.5787) (-0.6274,-1.5541) (-0.6217,-1.5299) (-0.6159,-1.5060) (-0.6102,-1.4824) (-0.6044,-1.4591) (-0.5987,-1.4361) (-0.5929,-1.4134) (-0.5872,-1.3911) (-0.5814,-1.3690) (-0.5757,-1.3472) (-0.5699,-1.3257) (-0.5642,-1.3044) (-0.5584,-1.2835) (-0.5527,-1.2628) (-0.5469,-1.2424) (-0.5412,-1.2223) (-0.5354,-1.2024) (-0.5296,-1.1828) (-0.5239,-1.1635) (-0.5181,-1.1444) (-0.5124,-1.1255) (-0.5066,-1.1069) (-0.5009,-1.0886) (-0.4951,-1.0705) (-0.4894,-1.0526) (-0.4836,-1.0350) (-0.4779,-1.0175) (-0.4721,-1.0004) (-0.4664,-0.9834) (-0.4606,-0.9667) (-0.4549,-0.9502) (-0.4491,-0.9339) (-0.4434,-0.9178) (-0.4376,-0.9019) (-0.4319,-0.8863) (-0.4261,-0.8708) (-0.4204,-0.8555) (-0.4146,-0.8405) (-0.4088,-0.8256) (-0.4031,-0.8109) (-0.3973,-0.7965) (-0.3916,-0.7822) (-0.3858,-0.7681) (-0.3801,-0.7541) (-0.3743,-0.7404) (-0.3686,-0.7268) (-0.3628,-0.7134) (-0.3571,-0.7002) (-0.3513,-0.6872) (-0.3456,-0.6743) (-0.3398,-0.6616) (-0.3341,-0.6490) (-0.3283,-0.6366) (-0.3226,-0.6244) (-0.3168,-0.6123) (-0.3111,-0.6004) (-0.3053,-0.5886) (-0.2996,-0.5770) (-0.2938,-0.5655) (-0.2881,-0.5542) (-0.2823,-0.5430) (-0.2765,-0.5319) (-0.2708,-0.5210) (-0.2650,-0.5102) (-0.2593,-0.4996) (-0.2535,-0.4891) (-0.2478,-0.4787) (-0.2420,-0.4684) (-0.2363,-0.4583) (-0.2305,-0.4483) (-0.2248,-0.4384) (-0.2190,-0.4286) (-0.2133,-0.4189) (-0.2075,-0.4094) (-0.2018,-0.3999) (-0.1960,-0.3906) (-0.1903,-0.3814) (-0.1845,-0.3723) (-0.1788,-0.3632) (-0.1730,-0.3543) (-0.1673,-0.3455) (-0.1615,-0.3368) (-0.1558,-0.3282) (-0.1500,-0.3197) (-0.1442,-0.3112) (-0.1385,-0.3029) (-0.1327,-0.2947) (-0.1270,-0.2865) (-0.1212,-0.2784) (-0.1155,-0.2704) (-0.1097,-0.2625) (-0.1040,-0.2547) (-0.0982,-0.2470) (-0.0925,-0.2393) (-0.0867,-0.2317) (-0.0810,-0.2242) (-0.0752,-0.2168) (-0.0695,-0.2094) (-0.0637,-0.2021) (-0.0580,-0.1948) (-0.0522,-0.1877) (-0.0465,-0.1806) (-0.0407,-0.1735) (-0.0350,-0.1666) (-0.0292,-0.1597) (-0.0235,-0.1528) (-0.0177,-0.1460) (-0.0119,-0.1393) (-0.0062,-0.1326) (-0.0004,-0.1260) (0.0053,-0.1194) (0.0111,-0.1129) (0.0168,-0.1065) (0.0226,-0.1001) (0.0283,-0.0937) (0.0341,-0.0874) (0.0398,-0.0812) (0.0456,-0.0750) (0.0513,-0.0688) (0.0571,-0.0627) (0.0628,-0.0566) (0.0686,-0.0506) (0.0743,-0.0446) (0.0801,-0.0387) (0.0858,-0.0328) (0.0916,-0.0269) (0.0973,-0.0211) (0.1031,-0.0153) (0.1088,-0.0095) (0.1146,-0.0038) (0.1204,0.0018) (0.1261,0.0075) (0.1319,0.0131) (0.1376,0.0186) (0.1434,0.0242) (0.1491,0.0297) (0.1549,0.0351) (0.1606,0.0406) (0.1664,0.0460) (0.1721,0.0513) (0.1779,0.0567) (0.1836,0.0620) (0.1894,0.0672) (0.1951,0.0725) (0.2009,0.0777) (0.2066,0.0829) (0.2124,0.0881) (0.2181,0.0932) (0.2239,0.0983) (0.2296,0.1034) (0.2354,0.1085) (0.2412,0.1135) (0.2469,0.1185) (0.2527,0.1235) (0.2584,0.1285) (0.2642,0.1334) (0.2699,0.1383) (0.2757,0.1432) (0.2814,0.1481) (0.2872,0.1529) (0.2929,0.1578) (0.2987,0.1626) (0.3044,0.1673) (0.3102,0.1721) (0.3159,0.1769) (0.3217,0.1816) (0.3274,0.1863) (0.3332,0.1910) (0.3389,0.1956) (0.3447,0.2003) (0.3504,0.2049) (0.3562,0.2095) (0.3619,0.2141) (0.3677,0.2187) (0.3735,0.2232) (0.3792,0.2277) (0.3850,0.2323) (0.3907,0.2368) (0.3965,0.2412) (0.4022,0.2457) (0.4080,0.2502) (0.4137,0.2546) (0.4195,0.2590) (0.4252,0.2634) (0.4310,0.2678) (0.4367,0.2722) (0.4425,0.2765) (0.4482,0.2809) (0.4540,0.2852) (0.4597,0.2895) (0.4655,0.2938) (0.4712,0.2981) (0.4770,0.3024) (0.4827,0.3066) (0.4885,0.3109) (0.4942,0.3151) (0.5000,0.3193)};
\draw[confcol,line width=1.25pt] plot coordinates {(-1.4500,0.5351) (-1.4442,0.5308) (-1.4385,0.5264) (-1.4327,0.5221) (-1.4270,0.5177) (-1.4212,0.5133) (-1.4155,0.5089) (-1.4097,0.5045) (-1.4040,0.5001) (-1.3982,0.4956) (-1.3925,0.4912) (-1.3867,0.4867) (-1.3810,0.4822) (-1.3752,0.4777) (-1.3695,0.4731) (-1.3637,0.4686) (-1.3580,0.4640) (-1.3522,0.4594) (-1.3465,0.4548) (-1.3407,0.4502) (-1.3350,0.4456) (-1.3292,0.4409) (-1.3235,0.4362) (-1.3177,0.4316) (-1.3119,0.4268) (-1.3062,0.4221) (-1.3004,0.4173) (-1.2947,0.4126) (-1.2889,0.4078) (-1.2832,0.4030) (-1.2774,0.3981) (-1.2717,0.3932) (-1.2659,0.3884) (-1.2602,0.3835) (-1.2544,0.3785) (-1.2487,0.3736) (-1.2429,0.3686) (-1.2372,0.3636) (-1.2314,0.3586) (-1.2257,0.3535) (-1.2199,0.3484) (-1.2142,0.3433) (-1.2084,0.3382) (-1.2027,0.3330) (-1.1969,0.3278) (-1.1912,0.3226) (-1.1854,0.3174) (-1.1796,0.3121) (-1.1739,0.3068) (-1.1681,0.3015) (-1.1624,0.2961) (-1.1566,0.2907) (-1.1509,0.2853) (-1.1451,0.2799) (-1.1394,0.2744) (-1.1336,0.2689) (-1.1279,0.2633) (-1.1221,0.2577) (-1.1164,0.2521) (-1.1106,0.2464) (-1.1049,0.2407) (-1.0991,0.2350) (-1.0934,0.2292) (-1.0876,0.2234) (-1.0819,0.2176) (-1.0761,0.2117) (-1.0704,0.2058) (-1.0646,0.1998) (-1.0588,0.1938) (-1.0531,0.1877) (-1.0473,0.1816) (-1.0416,0.1755) (-1.0358,0.1693) (-1.0301,0.1630) (-1.0243,0.1567) (-1.0186,0.1504) (-1.0128,0.1440) (-1.0071,0.1376) (-1.0013,0.1311) (-0.9956,0.1245) (-0.9898,0.1179) (-0.9841,0.1113) (-0.9783,0.1046) (-0.9726,0.0978) (-0.9668,0.0910) (-0.9611,0.0841) (-0.9553,0.0771) (-0.9496,0.0701) (-0.9438,0.0630) (-0.9381,0.0559) (-0.9323,0.0487) (-0.9265,0.0414) (-0.9208,0.0340) (-0.9150,0.0266) (-0.9093,0.0191) (-0.9035,0.0115) (-0.8978,0.0039) (-0.8920,-0.0038) (-0.8863,-0.0116) (-0.8805,-0.0195) (-0.8748,-0.0275) (-0.8690,-0.0355) (-0.8633,-0.0436) (-0.8575,-0.0519) (-0.8518,-0.0602) (-0.8460,-0.0686) (-0.8403,-0.0771) (-0.8345,-0.0857) (-0.8288,-0.0943) (-0.8230,-0.1031) (-0.8173,-0.1120) (-0.8115,-0.1210) (-0.8058,-0.1301) (-0.8000,-0.1392) (-0.7942,-0.1485) (-0.7885,-0.1580) (-0.7827,-0.1675) (-0.7770,-0.1771) (-0.7712,-0.1868) (-0.7655,-0.1967) (-0.7597,-0.2067) (-0.7540,-0.2168) (-0.7482,-0.2270) (-0.7425,-0.2374) (-0.7367,-0.2479) (-0.7310,-0.2585) (-0.7252,-0.2692) (-0.7195,-0.2801) (-0.7137,-0.2911) (-0.7080,-0.3023) (-0.7022,-0.3135) (-0.6965,-0.3250) (-0.6907,-0.3366) (-0.6850,-0.3483) (-0.6792,-0.3602) (-0.6735,-0.3722) (-0.6677,-0.3844) (-0.6619,-0.3967) (-0.6562,-0.4092) (-0.6504,-0.4219) (-0.6447,-0.4347) (-0.6389,-0.4477) (-0.6332,-0.4609) (-0.6274,-0.4742) (-0.6217,-0.4878) (-0.6159,-0.5014) (-0.6102,-0.5153) (-0.6044,-0.5294) (-0.5987,-0.5436) (-0.5929,-0.5580) (-0.5872,-0.5726) (-0.5814,-0.5874) (-0.5757,-0.6024) (-0.5699,-0.6176) (-0.5642,-0.6330) (-0.5584,-0.6486) (-0.5527,-0.6645) (-0.5469,-0.6805) (-0.5412,-0.6967) (-0.5354,-0.7132) (-0.5296,-0.7298) (-0.5239,-0.7467) (-0.5181,-0.7638) (-0.5124,-0.7812) (-0.5066,-0.7987) (-0.5009,-0.8165) (-0.4951,-0.8346) (-0.4894,-0.8529) (-0.4836,-0.8714) (-0.4779,-0.8901) (-0.4721,-0.9092) (-0.4664,-0.9284) (-0.4606,-0.9480) (-0.4549,-0.9678) (-0.4491,-0.9878) (-0.4434,-1.0081) (-0.4376,-1.0287) (-0.4319,-1.0496) (-0.4261,-1.0707) (-0.4204,-1.0922) (-0.4146,-1.1139) (-0.4088,-1.1359) (-0.4031,-1.1582) (-0.3973,-1.1807) (-0.3916,-1.2036) (-0.3858,-1.2268) (-0.3801,-1.2503) (-0.3743,-1.2742) (-0.3686,-1.2983) (-0.3628,-1.3228) (-0.3571,-1.3475) (-0.3513,-1.3727) (-0.3456,-1.3981) (-0.3398,-1.4239) (-0.3341,-1.4500) (-0.3283,-1.4765) (-0.3226,-1.5033) (-0.3168,-1.5305) (-0.3111,-1.5581) (-0.3053,-1.5860) (-0.2996,-1.6143) (-0.2938,-1.6430) (-0.2881,-1.6720) (-0.2823,-1.7015) (-0.2765,-1.7313) (-0.2708,-1.7615) (-0.2650,-1.7922) (-0.2593,-1.8232) (-0.2535,-1.8547) (-0.2478,-1.8866) (-0.2420,-1.9189) (-0.2363,-1.9516) (-0.2305,-1.9848) (-0.2248,-2.0184) (-0.2190,-2.0524) (-0.2133,-2.0869) (-0.2075,-2.1219) (-0.2018,-2.1574) (-0.1960,-2.1933) (-0.1903,-2.2297) (-0.1845,-2.2665) (-0.1788,-2.3039) (-0.1730,-2.3418) (-0.1673,-2.3801) (-0.1615,-2.4190) (-0.1558,-2.4584) (-0.1500,-2.4984) (-0.1442,-2.5388) (-0.1385,-2.5798) (-0.1327,-2.6214) (-0.1270,-2.6635) (-0.1212,-2.7061) (-0.1155,-2.7493) (-0.1097,-2.7931) (-0.1040,-2.8375) (-0.0982,-2.8825) (-0.0925,-2.9281) (-0.0867,-2.9743) (-0.0810,-3.0211) (-0.0752,-3.0685) (-0.0695,-3.1166) (-0.0637,-3.1653) (-0.0580,-3.2146) (-0.0522,-3.2646) (-0.0465,-3.3153) (-0.0407,-3.3667) (-0.0350,-3.4187) (-0.0292,-3.4714) (-0.0235,-3.5249) (-0.0177,-3.5790) (-0.0119,-3.6339) (-0.0062,-3.6895) (-0.0004,-3.7458) (0.0053,-3.8029) (0.0111,-3.8608) (0.0168,-3.9194) (0.0226,-3.9788) (0.0283,-4.0390) (0.0341,-4.1000) (0.0398,-4.1618) (0.0456,-4.2244) (0.0513,-4.2879) (0.0571,-4.3522) (0.0628,-4.4174) (0.0686,-4.4835) (0.0743,-4.5504) (0.0801,-4.6182) (0.0858,-4.6869) (0.0916,-4.7566) (0.0973,-4.8271) (0.1031,-4.8986) (0.1088,-4.9711) (0.1146,-5.0445) (0.1204,-5.1189) (0.1261,-5.1943) (0.1319,-5.2708) (0.1376,-5.3482) (0.1434,-5.4266) (0.1491,-5.5061) (0.1549,-5.5867) (0.1606,-5.6683) (0.1664,-5.7511) (0.1721,-5.8349) (0.1779,-5.9198) (0.1836,-6.0059) (0.1894,-6.0931) (0.1951,-6.1815)};
\fill[gray!50] (-0.4577,-0.9581) circle (0.011 and 0.092);

\node[gray!88,font=\footnotesize,anchor=north,align=center] at (-0.45,0.86)
      {transition window\\[-1.2pt]
       };

\node[diffcol,font=\footnotesize,anchor=north,align=center] at (-1.06,-6.78)
      {$|\Phi_{c}(0)-f_{M}(0)|\asymp e^{-1/c}$\\[-1pt]
       \\[-1pt]
       \textcolor{gray}{diffusive window: $\Phi_{c}\simeq f_{M}$}};
\node[confcol,font=\footnotesize,anchor=north,align=center] at (0.16,-6.78)
      {$\|c^{-1/2}\Phi_{c}(y/\sqrt c)-\mathcal U_{M}\|_{L^{\infty}(0,1)}\asymp e^{-\pi^{2}c}$\\[-1pt]
       \\[-1pt]
       \textcolor{gray}{confinement window: $\Phi_{c}\simeq\mathcal U_{M}$}};

\end{tikzpicture}
\caption{Illustration of Theorem~\ref{thm:intro-main} at $M=1$: the deviation
of $\Phi_{c}$ from each of the two states it degenerates to, evaluated from the
closed form \eqref{eq:Phic} and so free of discretisation error.  On the left,
the deviation from the self-similar profile at the origin, in similarity
variables, obeying the exact asymptotics \eqref{eq:sharpasymp}; on the right,
the deviation from the interval equilibrium in domain variables, at the
spectral rate $e^{-\pi^{2}c}$ of Proposition~\ref{prop:cinfty}.  The two are
measured in different coordinates because they are taken in different
coordinates (Remark~\ref{rem:scaling}), and the uniform-in-$L$ comparison
carries the exponent of the left curve.  Each deviation is small exactly on the
window where the corresponding description is valid; the two cross near
$c\approx0.35$, and in between only $\Phi_{c}$ describes the solution.  The
shading marks the transition window, in which there is no threshold, the
passage between regimes being continuous (Remark~\ref{rem:regimes}).}
\label{fig:crossover}
\end{figure}

Theorem~\ref{thm:intro-main} is a uniform singular-limit statement: in
domain variables the datum occupies a layer of width $O(L^{-1})$, the
observation time is $cL^2$, and the nonlinear observable is $w_x/w$.
Section~\ref{FI} is preparatory and essentially known: Theorems~\ref{T1} and \ref{expt1} are the $L^{1}$ version of results proved in \cite{Watanabe2016} for $C^{2}$ data, and the half-line analysis is close to \cite{Karch2000}. It is included because the whole paper is read off the Hopf--Cole reduction recalled there.

Section~\ref{sm} proves Theorem~\ref{thm:intro-main}.
Section~\ref{sec:variable} asks how much survives when the diffusivity depends
on $x$, $u_t=\partial_x\big(a(x)(u_x+u^2)\big)$. Hopf--Cole still linearizes and
the picture separates: the stationary profile $U_M^L$ is unchanged and the
relaxation rate is governed by the principal eigenvalue $\lambda_1(a)$, but the
explicit crossover is lost, the family $\Phi_c$ and the sharp constant resting
on the closed-form image representation available only for constant
coefficients; the half-line regime there is close in spirit to Duro and Zuazua
\cite{DuroZuazua1999}. Section~\ref{NS} gives numerical illustrations, and
Section~\ref{multi} closes with a brief outlook.

\section{Existence, uniqueness, and large-time behavior}\label{FI}

This section collects, for completeness, the well-posedness and large-time facts used in
Section~\ref{sm}. Nothing here is essentially new: Theorems~\ref{T1} and \ref{expt1} are the
$L^1$ version of results proved in \cite{Watanabe2016} for $C^2$ data, and
Section~\ref{infinite} is close in spirit to \cite{Karch2000}, what is specific to the
conservative condition being the explicit form of the profile $f_M$. Proofs are included
because the whole paper is read off the Hopf--Cole reduction recalled here.

Throughout, a \emph{Hopf--Cole solution} of \eqref{eb} or \eqref{ei} means a
function $u=w_x/w$, where $w$ is the classical solution of the associated linear
problem and $u(\cdot,t)\to u_0$ in $L^1$ as $t\downarrow0$.  All uniqueness
statements below refer to this class.  We use this terminology explicitly to
avoid asserting uniqueness in a larger, unspecified class of weak solutions;
the transition analysis requires precisely the Hopf--Cole solution and its heat
kernel estimates.

\subsection{\texorpdfstring{Large-time behavior in $(0,L)$}{Large-time behavior in (0,L)}}

We consider the Burgers equation posed on a bounded interval:
\begin{equation}\label{eb}\left\{\begin{array}{l}
u_t-u_{xx}=(u^2)_x,\ \ \ \  (x,t)\in(0,L)\times (0,\infty)\\
u_x(0,t)=-u^2(0,t),\ \ u_x(L,t)=-u^2(L,t),\ \ \  \ t\in (0,\infty)\\
u(x,0)=u_0(x),\ \ \ x\in (0,L).
\end{array}\right.
\end{equation}
As already mentioned, \eqref{eb} reads $u_t-\partial_x(u_x+u^2)=0$, so the flux is
$-(u_x+u^2)$ and the boundary conditions are precisely the statement that it vanishes at
$x=0,L$.

\begin{teo}\label{T1}
Let $u_0 \in L^1(0,L)$ have mass $M$. Then problem \eqref{eb} admits a unique
Hopf--Cole solution, and
it preserves the mass,
\[
\int_0^L u(x,t)\,dx = M, \qquad t>0.
\]
\end{teo}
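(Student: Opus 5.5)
The proof runs entirely through the Hopf--Cole transformation. Set $W_0(x)=\exp\bigl(\int_0^x u_0(s)\,ds\bigr)$. Since $u_0\in L^1(0,L)$, $W_0$ is absolutely continuous, and
\[
e^{-\|u_0\|_{L^1}}\le W_0\le e^{\|u_0\|_{L^1}},\qquad W_0(0)=1,\qquad W_0(L)=e^{M}.
\]
If $u=w_x/w$ with $w>0$, then $w_t=w_{xx}$ turns into $u_t=u_{xx}+(u^2)_x$. The boundary condition becomes
\[
u_x+u^2=\frac{w_{xx}}{w}=\frac{w_t}{w},
\]
so the no-flux condition $u_x+u^2=0$ at $x=0,L$ is equivalent to $w_t=0$ there. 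We therefore take $w$ to be the classical solution of the heat equation on $(0,L)$ with constant Dirichlet data $w(0,t)=1$, $w(L,t)=e^{M}$ and initial datum $W_0$. Since $W_0$ is continuous and matches the boundary values, this problem has a unique classical solution, smooth for $t>0$ and continuous up to $t=0$. It is given explicitly by
\[
w=1+(e^{M}-1)\frac{x}{L}+\sum_n b_n e^{-n^2\pi^2 t/L^2}\sin\frac{n\pi x}{L},
\]
where the $b_n$ are the sine coefficients of $W_0$ minus the linear interpolant. By the maximum principle, $w\ge\min\bigl(1,e^{M},\min W_0\bigr)>0$, so $u:=w_x/w$ is smooth for $t>0$. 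The identity above then shows that $u$ solves \eqref{eb} with the conservative boundary conditions.

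Mass conservation is immediate from the construction:
\[
\int_0^L u(x,t)\,dx=\log w(L,t)-\log w(0,t)=M-0=M .
\]
Alternatively, integrating $u_t=\partial_x(u_x+u^2)$ in $x$ and using the boundary conditions gives the same conclusion.

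The main obstacle is the initial trace $u(\cdot,t)\to u_0$ in $L^1$, because $u_0$ is only integrable and $W_0$ is merely $W^{1,1}$. The plan is to write $w(t)=S(t)W_0$, where $S$ is the Dirichlet heat semigroup after subtracting the linear interpolant $\ell$, so that $w-\ell$ has zero boundary values. Differentiating, $w_x$ is obtained by applying the Neumann heat semigroup to $W_0'=u_0W_0$, which lies in $L^1$; indeed the $x$-derivative of the Dirichlet problem for $w-\ell$ satisfies the Neumann problem. Strong continuity of the Neumann semigroup on $L^1(0,L)$ then gives $w_x(t)\to W_0'$ in $L^1$. Uniform convergence $w(t)\to W_0$ together with the positive lower bound on $w$ then yields
\[
\frac{w_x}{w}\longrightarrow \frac{W_0'}{W_0}=u_0\quad\text{in }L^1 .
\]
One point needs checking: differentiating the Dirichlet problem for $w-\ell$ produces a Neumann problem for $w_x$. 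This holds because $\partial_x(w-\ell)$ satisfies $\partial_x\bigl[(w-\ell)_{xx}\bigr]=0$ at the boundary, since $(w-\ell)_t=0$ there.

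For uniqueness, let $u=w_x/w$ and $\tilde u=\tilde w_x/\tilde w$ be two Hopf--Cole solutions with the same data. Normalize $w(0,t)=1$; the boundary value $w(L,t)=e^{M}$ is then forced by mass conservation, since $\int_0^L u\,dx=\log w(L,t)$. Because $u(t)\to u_0$ in $L^1$, we get
\[
w(x,t)=\exp\Bigl(\int_0^x u(s,t)\,ds\Bigr)\to W_0\quad\text{uniformly},
\]
and likewise for $\tilde w$. Hence $w-\tilde w$ solves the heat equation with zero Dirichlet data and zero initial trace in $C([0,L])$. The maximum principle gives $w=\tilde w$, and therefore $u=\tilde u$.
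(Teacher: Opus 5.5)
Your proof is correct and follows the paper's lines. The shared steps are the Hopf--Cole reduction to the heat equation with Dirichlet data $1$ and $e^{M}$, mass computed as $\log w(L,t)-\log w(0,t)$, and the initial trace from strong continuity on $L^1(0,L)$ of the Neumann semigroup $S_N(t)$ applied to $W_0'=u_0W_0$. Your identity $u_x+u^2=w_t/w$ is the same equivalence the paper derives through the Dirichlet problem for $v=\int_0^x u$.

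There are two differences worth noting.

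\emph{Initial trace.} The paper argues in the opposite direction. It takes $h=S_N(t)W_0'$, checks that $1+\int_0^x h$ solves \eqref{ebw} and is continuous up to $t=0$, and obtains $w_x=h$ from uniqueness of the Dirichlet problem, so the initial data match automatically. In your direction, checking the Neumann condition for $t>0$ does not by itself identify $z_x(t)$, with $z=w-\ell$, as $S_N(t)z_0'$. You need one more line. For instance, integration by parts (legitimate since $z_0$ is absolutely continuous and vanishes at both ends) shows that the cosine coefficients of $z_0'$ are $(n\pi/L)b_n$, which is exactly the termwise derivative of your sine series. Also, the Neumann condition you need is $(w-\ell)_{xx}=0$ at $x=0,L$, which is what $(w-\ell)_t=0$ gives. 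The expression $\partial_x[(w-\ell)_{xx}]$ you wrote has a spurious derivative.

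\emph{Uniqueness.} The paper's argument is essentially definitional: the class consists of $w_x/w$ with $w$ \emph{the} solution of \eqref{ebw}. Yours proves more. Any classical solution with the conservative boundary conditions and $L^1$ initial trace yields, via $\exp\int_0^x u$, a solution of \eqref{ebw}. The boundary conditions make $w(0,\cdot)$ and $w(L,\cdot)$ constant in time, which is what legitimizes your normalization. The trace then fixes the constant $e^{M}$ and gives $w(\cdot,t)\to W_0$ uniformly. The maximum principle therefore yields uniqueness in a genuinely larger class than the paper's.
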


\begin{proof}
Set $v(x,t)=\int_0^x u(z,t)\,dz$, so that $u=v_x$ and $v(0,t)=0$. Since \eqref{eb} in conservative form reads $u_t=\partial_x(u_x+u^2)$, for every $t>0$
\[
\partial_t v(L,t)=\int_0^L u_t\,dx=\int_0^L \partial_x(u_x+u^2)\,dx=\big[u_x+u^2\big]_{x=0}^{x=L}=0,
\]
the last equality being exactly the boundary conditions of \eqref{eb}; hence $v(L,t)=v(L,0)=M$ for all $t>0$. Differentiating $v=\int_0^x u$ and using \eqref{eb} shows that $v$ then satisfies
\begin{equation}\label{ebv}
\left\{
\begin{array}{ll}
v_t-v_{xx}=(v_x)^2, & (x,t)\in(0,L)\times(0,\infty),\\[1mm]
v(0,t)=0,\qquad v(L,t)=M, & t>0,\\[1mm]
v(x,0)=\displaystyle\int_0^x u_0(z)\,dz, & x\in(0,L).
\end{array}
\right.
\end{equation}
Conversely, if $v$ solves \eqref{ebv}, then $v(L,\cdot)\equiv M$ gives $v_t(L,t)=0$, and evaluating $v_t-v_{xx}=(v_x)^2$ at $x=L$ yields $v_{xx}(L,t)=-(v_x(L,t))^2$, i.e.\ $u_x(L,t)=-u^2(L,t)$; likewise at $x=0$. Thus the nonlinear Neumann conditions of \eqref{eb} and the Dirichlet problem \eqref{ebv} are equivalent, and mass conservation is a consequence rather than an assumption.

Applying the Hopf--Cole transformation $w=e^v$,
\begin{equation}\label{ebw}
\left\{
\begin{array}{ll}
w_t-w_{xx}=0,& (x,t)\in (0,L)\times(0,\infty)\\[1mm]
w(0,t)=1,\ \  w(L,t)=e^M, & t>0,\\[1mm]
w(x,0)=w_0(x):=\displaystyle \exp\!\left(\int_0^x u_0(z)\,dz\right), & x\in(0,L).
\end{array}
\right.
\end{equation}
Since $u_0\in L^1(0,L)$, the datum $v(\cdot,0)$ is absolutely continuous, so $w_0$ is
continuous on $[0,L]$, strictly positive, and compatible with the boundary data:
$w_0(0)=1$ and $w_0(L)=e^M$. Subtracting the affine steady state $W(x)=1+(e^M-1)x/L$ reduces
\eqref{ebw} to the heat equation with homogeneous Dirichlet conditions and continuous datum
$w_0-W$ vanishing at both endpoints, for which existence and uniqueness of a classical
solution, continuous up to $t=0$, are standard. The maximum principle gives $w>0$; more precisely $0<m_{0}\le w\le M_{0}<\infty$ on
$[0,L]\times[0,\infty)$ with the explicit constants of \eqref{eq:wbounds} below, and the
same bounds hold for $w_{0}$. In particular $u:=w_x/w$ is well defined and smooth for
$t>0$.

It remains to identify the initial trace. Since $u_{0}\in L^{1}(0,L)$, the datum
$w_{0}$ is absolutely continuous with $w_{0}'=w_{0}u_{0}\in L^{1}(0,L)$. Let
$h(\cdot,t)$ solve the heat equation on $(0,L)$ with homogeneous Neumann
conditions and datum $w_{0}'$. Then $\tilde w(x,t):=1+\int_{0}^{x}h(z,t)\,dz$
satisfies $\tilde w_{t}=h_{x}=\tilde w_{xx}$, has $\tilde w(0,t)=1$ and, the
Neumann semigroup preserving the integral, $\tilde w(L,t)=w_{0}(L)=e^{M}$, with
$\tilde w(\cdot,0)=w_{0}$; so $\tilde w$ solves \eqref{ebw} and, by the
uniqueness just recalled, $w=\tilde w$ and $w_{x}=h$. That semigroup being
strongly continuous on $L^{1}(0,L)$, one has $w_{x}(\cdot,t)\to w_{0}u_{0}$ in
$L^{1}(0,L)$ and $w(\cdot,t)\to w_{0}$ uniformly on $[0,L]$. Writing
$u(\cdot,t)-u_{0}=(w_{x}-w_{0}u_{0})/w+w_{0}u_{0}\big(w^{-1}-w_{0}^{-1}\big)$
and using $w\ge m_{0}>0$, both terms tend to $0$ in $L^{1}$ as $t\to0^{+}$, so
$u(\cdot,t)\to u_{0}$ in $L^{1}(0,L)$.

This is the unique solution in the class fixed above, since the correspondence
$u\leftrightarrow v\leftrightarrow w$ is one-to-one and $w$ is unique.

Mass conservation is immediate from \eqref{ebv}: for every $t>0$,
\[
\int_0^L u(x,t)\,dx=v(L,t)-v(0,t)=M .
\]
\end{proof}

When the interval length varies, we write the stationary solution of \eqref{eb} with mass $M$ as
\begin{equation}\label{UM}
U_M^L(x)=\dfrac{e^M-1}{x(e^M-1)+L}.
\end{equation}
Its unit-interval rescaling is
\begin{equation}\label{eq:calUM}
\mathcal U_M(y):=L U_M^L(Ly)=\frac{e^M-1}{1+(e^M-1)y},\qquad 0\le y\le1.
\end{equation}

The stationary solution is unique, and the uniqueness is transparent at the linear
level: a stationary $w$ solves $w_{xx}=0$ with $w(0)=1$, $w(L)=e^M$, whose only
solution is the affine profile $W(x)=1+(e^M-1)x/L>0$. Through the bijection
$u\leftrightarrow v\leftrightarrow w$ this determines the stationary states at the
other two levels uniquely, $V=\log W$ and
\[
U_M^L(x)=\frac{W_x}{W}=\frac{e^M-1}{L+(e^M-1)x},
\]
which is \eqref{UM}. In particular $U_M^L$ is the only possible equilibrium of
\eqref{eb} in the mass class $M$: since the mass is conserved along the evolution,
every trajectory issuing from a datum of mass $M$ can relax to no steady state
other than $U_M^L$, which is what makes the long-time limit unambiguous.

\begin{teo}\label{expt1}
Let $u_0 \in L^1(0,L)$ have mass $M$, let $u$ be the corresponding solution of \eqref{eb},
and let $\lambda_1=\pi^2/L^2$ be the first Dirichlet eigenvalue of $(0,L)$. Then, for every
$1\le p\le\infty$ there is $C=C(\|u_0\|_{L^1},L,p)>0$ such that
\begin{equation}\label{eq:expconv}
\|u(\cdot,t)-U_M^L\|_{L^p(0,L)}
\le C\,\bigl(1+t^{-3/4}\bigr)\, e^{-\lambda_1 t},
\qquad t>0 .
\end{equation}
\end{teo}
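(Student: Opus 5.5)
We work entirely at the linear level provided by the proof of Theorem~\ref{T1}. Write $w=W+z$, where $W(x)=1+(e^M-1)x/L$ is the affine steady state and $z$ solves the heat equation on $(0,L)$ with homogeneous Dirichlet conditions and continuous datum $z_0=w_0-W$, vanishing at both endpoints. Since $U_M^L=W_x/W$, we get
\[
u-U_M^L=\frac{w_x}{w}-\frac{W_x}{W}=\frac{z_x}{w}-\frac{W_x\,z}{wW}.
\]
By the maximum principle, $w\ge m_0>0$ and $W\ge\min(1,e^M)$ on $[0,L]\times[0,\infty)$. Also $|W_x|\le|e^M-1|/L$, and $m_0$ depends only on $\|u_0\|_{L^1}$ through the bounds $e^{-\|u_0\|_{L^1}}\le w_0\le e^{\|u_0\|_{L^1}}$. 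The task therefore reduces to estimating $\|z(\cdot,t)\|_{L^p}$ and $\|z_x(\cdot,t)\|_{L^p}$ by $C(1+t^{-3/4})e^{-\lambda_1 t}$.

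Expand $z$ in the Dirichlet eigenbasis $\sin(n\pi x/L)$:
\[
z(x,t)=\sum_n b_n e^{-n^2\pi^2t/L^2}\sin(n\pi x/L),\qquad |b_n|\le 2\|z_0\|_{L^\infty}\le C(\|u_0\|_{L^1}).
\]
For $t\ge1$, factor out $e^{-\lambda_1 t}$. The remaining terms of $z$ and of $z_x$ are summable, because
\[
\sum_n n\,e^{-(n^2-1)\pi^2/L^2}<\infty,
\]
so $\|z\|_{C^1}\le Ce^{-\lambda_1 t}$ uniformly in $x$. This gives every $L^p$ norm, with $C$ depending on $L$ and $\|u_0\|_{L^1}$.

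The short-time regime $0<t\le1$ is the main obstacle, since $z_x$ is only controlled through the datum $z_0'=w_0u_0-W_x\in L^1$. Here I would not use the eigenfunction expansion. Instead, note that $w_x=h$ solves the Neumann heat equation with datum $w_0u_0$, as in the proof of Theorem~\ref{T1}. The $L^1\to L^p$ smoothing of the Neumann semigroup on a bounded interval gives
\[
\|w_x(\cdot,t)\|_{L^p}\le C\,t^{-\frac12(1-1/p)}\|w_0u_0\|_{L^1}\le C\,t^{-1/2}.
\]
Then $z_x=w_x-W_x$ obeys the same bound, and $\|z\|_{L^\infty}\le\|z_0\|_{L^\infty}$. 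For $t\le1$ we have $e^{-\lambda_1t}\ge e^{-\lambda_1}$, so
\[
t^{-1/2}\le C(L)\,t^{-3/4}e^{-\lambda_1 t},
\]
and the weaker power $t^{-3/4}$ in the statement absorbs this, covering all $p\in[1,\infty]$ at once since the exponent $\frac12(1-1/p)\le\frac12$.

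Combining the two regimes with the quotient identity and the lower bounds on $w$ and $W$ yields \eqref{eq:expconv}. The only delicate point is keeping the constants dependent on $\|u_0\|_{L^1}$ alone, and not on finer properties of $u_0$. This holds because $b_n$, $m_0$, and the $L^1$ norm of $w_0u_0$ are all bounded in terms of $e^{\|u_0\|_{L^1}}$ and $\|u_0\|_{L^1}$.
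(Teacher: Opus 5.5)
Your proof is correct, and its skeleton is the paper's: the splitting $z=w-W$, the quotient identity for $u-U_M^L$ together with the lower bounds on $w$ and $W$, and, for $t\ge1$, a spectral-gap bound on $z$ in $C^1$. Your explicit sine series with $|b_n|\le 2\|z_0\|_{L^\infty}$ is a concrete version of the paper's ``evolve one unit of time, then use the gap''.

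Where you genuinely differ is the short-time regime. The paper applies the $L^2\to W^{j,\infty}$ smoothing of the Dirichlet semigroup to $z$, using only $z(\cdot,0)\in L^2$. The $j=1$ case of that estimate is exactly the source of the power $t^{-3/4}$, which is then carried into every $L^p$ norm through $\|\cdot\|_{L^p}\le L^{1/p}\|\cdot\|_{L^\infty}$.

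You instead use the extra information $w_0'=w_0u_0\in L^1$. Through the identification, made in the proof of Theorem~\ref{T1}, of $w_x$ with the Neumann evolution of $w_0u_0$, you estimate $z_x$ directly in $L^p$ by $L^1\to L^p$ smoothing. This gives the sharper prefactor $1+t^{-\frac12(1-\frac1p)}$, which is the natural scaling for data controlled only in $L^1$. In particular, the singular factor disappears for $p=1$, which is consistent with Remark~\ref{rem:expt1} asserting its necessity only for $p>1$. It also shows that the exponent $3/4$ in \eqref{eq:expconv} is an artifact of the $L^2\to L^\infty$ route rather than intrinsic to the problem.

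In exchange, the paper's argument stays entirely within the Dirichlet problem for $z$ and uses nothing about $w_0$ beyond boundedness.
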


\begin{remark}\label{rem:expt1}
The prefactor cannot be dispensed with: for $u_0(x)=x^{-1/2}$ on $(0,1)$, which is
admissible ($\|u_0\|_{L^1}=2$, $w_0=e^{2\sqrt x}$), one has $w_0\notin H^1$, hence
$\|u(\cdot,t)\|_{L^2}\to\infty$ as $t\to0^+$ while $U_M^L\in L^\infty$, so no bound
$Ce^{-\lambda_1 t}$ can hold up to $t=0$ for $p>1$. Note also that
$\|U_M^L\|_{L^\infty}\sim1/L$ and $\|u(\cdot,t)\|_{L^\infty}\lesssim t^{-1/2}$, so at
$t\sim L^2$ both sides of \eqref{eq:expconv} are $O(1/L)$; the estimate is informative in a
relative sense only for $t\gg L^2$, which is the regime in which it is used
(Remark~\ref{rem:regimes}(3)).
\end{remark}

\begin{proof}
With $v$ and $w$ as in Theorem~\ref{T1}, $w$ solves \eqref{ebw} and $u=w_x/w$; the
stationary profile is $U_M^L=W_x/W$ with $W(x)=1+(e^M-1)x/L$. Set $z:=w-W$, which solves the
heat equation on $(0,L)$ with homogeneous Dirichlet conditions and datum $z(\cdot,0)=w_0-W$.
By \eqref{ebw} and the maximum principle,
\begin{equation}\label{eq:wbounds}
0<m_0:=\min\{1,e^M\}\wedge e^{-\|u_0\|_{L^1}}\le w\le e^{\|u_0\|_{L^1}}\vee\max\{1,e^M\},
\end{equation}
and the same bounds hold for $W$. Since $z(\cdot,0)\in L^\infty(0,L)\subset L^2(0,L)$, the
smoothing of the Dirichlet heat semigroup together with its spectral gap $\lambda_1$ gives,
for $j=0,1$,
\begin{equation}\label{eq:zsmoothing}
\|\partial_x^j z(\cdot,t)\|_{L^\infty(0,L)}
\le C\,\bigl(1+t^{-(2j+1)/4}\bigr)\,e^{-\lambda_1 t}\,\|z(\cdot,0)\|_{L^2(0,L)},
\qquad t>0,
\end{equation}
with $C=C(L)$. The two regimes are treated separately: for $0<t\le1$ the factor
$t^{-(2j+1)/4}$ is the standard $L^2\to W^{j,\infty}$ smoothing of the Dirichlet
semigroup, while for $t\ge1$ one first evolves for one unit of time and then applies
the spectral gap on $[1,t]$, which gives the bound with a constant prefactor. A factor
$t^{-(2j+1)/4}$ alone would fail for large $t$, as the eigenfunction
$z(\cdot,0)=\sin(\pi\cdot/L)$ shows. Finally,
\[
u-U_M^L=\frac{w_x}{w}-\frac{W_x}{W}=\frac{z_x}{w}-\frac{W_x}{wW}\,z ,
\]
so by \eqref{eq:wbounds} and $\|W\|_{C^1}\le C(L)$,
\[
|u-U_M^L|\le C\bigl(|z_x|+|z|\bigr) .
\]
Combining with \eqref{eq:zsmoothing} and $\|\cdot\|_{L^p(0,L)}\le L^{1/p}\|\cdot\|_{L^\infty(0,L)}$
gives \eqref{eq:expconv} for every $p\in[1,\infty]$; here the exponent
$3/4$ is the value furnished by the $j=1$ smoothing estimate in
\eqref{eq:zsmoothing}.
\end{proof}

\subsection{\texorpdfstring{Large-time behavior in $(0,\infty)$}{Large-time behavior in (0,infty)}}

We now consider the same problem on the half-line:
\begin{equation}\label{ei}\left\{\begin{array}{l}
u_t-u_{xx}=(u^2)_x,\ \ \ \  (x,t)\in(0,\infty)\times (0,\infty)\\
u_x(0,t)=-u^2(0,t),\ \ \  \ t\in (0,\infty)\\
u(x,0)=u_0(x)\in L^{1}(0,\infty).
\end{array}\right.
\end{equation}
The results of this subsection extend to the half-line the classical theory of large-time
behavior for convection-diffusion equations on the whole space \cite{evz1,evz2,ez}; as previously mentioned, the
half-line case with a Neumann condition was studied by Biler and Karch \cite{Karch2000}. The
systematic study of convergence to steady states for parabolic equations was initiated by
Friedman \cite{Friedman1959,Friedman1961}.

\begin{teo}\label{TI}
Let $u_0 \in L^1(0,\infty)$ have mass $M$. Then problem \eqref{ei} admits a unique
Hopf--Cole solution,
it preserves the mass, and for every $p\ge1$,
\[
\|u(\cdot,t)\|_{L^p} \le C\, t^{-\frac12(1-\frac1p)}, \qquad t>0.
\]
\end{teo}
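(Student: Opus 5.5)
We follow the proof of Theorem~\ref{T1} line by line, replacing the interval by the half-line. Set $v(x,t)=\int_0^x u(z,t)\,dz$ and $w=e^v$. Mass conservation together with the boundary condition at $x=0$ turns \eqref{ei} into the Dirichlet problem
\[
w_t=w_{xx}\ \text{on }(0,\infty)\times(0,\infty),\qquad w(0,t)=1,\qquad w(x,0)=w_0(x)=\exp\Big(\int_0^x u_0\Big).
\]
The datum $w_0$ is continuous and lies between $e^{-\|u_0\|_{L^1}}$ and $e^{\|u_0\|_{L^1}}$. It tends to $e^M$ as $x\to\infty$, and $w_0'=w_0u_0\in L^1$.

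\textbf{Existence and uniqueness.} Writing $z=w-1$, $z$ solves the heat equation on the half-line with homogeneous Dirichlet data and a bounded continuous datum. Odd reflection gives an explicit solution. Uniqueness holds among bounded solutions (Phragmén--Lindelöf, or Tikhonov's class), and the maximum principle yields the same two-sided bounds $m_0\le w\le M_0$ as in \eqref{eq:wbounds}. For the initial trace we use the same trick as in Theorem~\ref{T1}. Let $h$ solve the heat equation on $(0,\infty)$ with homogeneous Neumann condition (even reflection) and datum $w_0u_0\in L^1$. Then $1+\int_0^x h$ solves the Dirichlet problem, so $w_x=h$. Strong continuity of the Neumann semigroup in $L^1$ then gives $u=w_x/w\to u_0$ in $L^1$. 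Uniqueness of $u$ follows from the bijection $u\leftrightarrow w$.

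\textbf{Mass conservation.} The Neumann semigroup preserves integrals, so $\int_0^\infty h(\cdot,t)=\int w_0u_0=e^M-1$. Hence $w(x,t)\to e^M$ as $x\to\infty$, which gives $v(\infty,t)=M$, that is, $\int_0^\infty u(\cdot,t)=M$.

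\textbf{Decay estimate.} We bound $u$ by $m_0^{-1}|h|$, since $|u|=|h|/w\le m_0^{-1}|h|$. The Neumann heat semigroup on the half-line is the whole-line heat semigroup applied to the even extension. Young's inequality therefore gives
\[
\|h(t)\|_{L^p}\le \|G_t\|_{L^p}\,\|w_0u_0\|_{L^1}\le C\,t^{-\frac12(1-\frac1p)}\,M_0\|u_0\|_{L^1},
\]
where $G_t$ is the Gaussian kernel and $\|G_t\|_{L^p}\sim t^{-\frac12(1-\frac1p)}$. Combining the two bounds yields $\|u(t)\|_{L^p}\le C t^{-\frac12(1-\frac1p)}$ with $C=C(\|u_0\|_{L^1},p)$, valid for all $t>0$ with no separate large-time regime.

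\textbf{Main obstacle.} The step needing the most care is the identification $w_x=h$ on the unbounded domain. It requires that $1+\int_0^x h$ be bounded, so that the uniqueness class applies; this follows from $\|h(t)\|_{L^1}\le\|w_0u_0\|_{L^1}$. It also requires verifying the limit $w\to e^M$ at infinity, which we read off from the $L^1$ integrability of $h$.
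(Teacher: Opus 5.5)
Your proposal is correct and follows the paper's proof. The common steps are the Hopf--Cole reduction to the Dirichlet problem for $w$, the identification of $h=w_x$ with the even-reflected Neumann heat flow of $w_0u_0$ (used for the initial trace as in Theorem~\ref{T1}), and the $L^1\to L^p$ heat-semigroup decay combined with $|u|\le |h|/m_0$. The only variation is the mass step: you obtain $w(\infty,t)=e^M$ from $\int_0^\infty h(\cdot,t)=e^M-1$ (integral conservation of the Neumann flow), whereas the paper uses a direct Dirichlet-kernel tail estimate for $w-e^M$; both are valid, and yours has the mild advantage of reusing the identification $w_x=h$ you already need.
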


\begin{proof}
As in Theorem~\ref{T1}, set $w=\exp\bigl(\int_0^x u\bigr)$, so that $u=w_x/w$ and $w$ satisfies
\[
\left\{
\begin{array}{ll}
w_t-w_{xx}=0, &(x,t)\in(0,\infty)\times(0,\infty),\\[1mm]
w(0,t)=1, & t>0,\\[1mm]
w(x,0)=w_0(x):=\displaystyle\exp\!\left(\int_0^x u_0(z)\,dz\right), & x>0.
\end{array}
\right.
\]
The datum $w_0$ is continuous, strictly positive, bounded, and $w_0(0)=1$; existence,
uniqueness and positivity follow as before \cite{ev,Lieberman1996}, and the initial trace
$u(\cdot,t)\to u_{0}$ in $L^{1}(0,\infty)$ is obtained exactly as in Theorem \ref{T1},
with the Neumann semigroup of $(0,L)$ replaced by its half-line counterpart.

For the mass, set $h=w-1$, which has homogeneous Dirichlet boundary data. Then
$h(x,t)=\int_0^\infty G_D(t,x,y)(w_0(y)-1)\,dy$. Equivalently,
\[
w(x,t)-e^M=\int_0^\infty G_D(t,x,y)(w_0(y)-e^M)\,dy
+(e^M-1)\left(\int_0^\infty G_D(t,x,y)\,dy-1\right).
\]
Both terms tend to zero as $x\to\infty$. Indeed, for
any $\varepsilon>0$ choose $A$ with $|w_0-e^M|\le\varepsilon$ on $[A,\infty)$; for
$x\ge2A$ the contribution of $[0,A]$ to $\int G_D(t,x,y)(w_0(y)-e^M)\,dy$ is
$O(e^{-x^2/(16t)})$ and the rest is $\le\varepsilon$, whence
$\lim_{x\to\infty}w(x,t)=e^M$ for every $t>0$; the boundary term tends to zero by the same kernel formula. Since $w(0,t)=1$,
\[
\int_0^\infty u(x,t)\,dx=\ln w(\infty,t)-\ln w(0,t)=M,
\qquad t>0.
\]

For the decay, set $h:=w_x$. Differentiating the Dirichlet condition $w(0,t)=1$ in $t$ and
using the equation gives $w_{xx}(0,t)=0$, that is $h_x(0,t)=0$; equivalently, since
$h_x=w(u_x+u^2)$, this is the conservative condition $u_x(0,t)=-u^2(0,t)$. Thus $h$ solves
the heat equation on $(0,\infty)$ with a homogeneous Neumann condition at $x=0$. Extending
$h$ evenly across $x=0$ yields a solution on $\mathbb R$, to which the standard heat
semigroup estimates apply:
\[
\|h(\cdot,t)\|_{L^p(0,\infty)}
\le
Ct^{-\frac12(1-\frac1p)}\|h(\cdot,0)\|_{L^1(0,\infty)},
\qquad 1\le p\le\infty.
\]
Since $h(x,0)=w_0(x)u_0(x)$ and $e^{-\|u_0\|_{L^1}}\le w_0\le e^{\|u_0\|_{L^1}}$, we get
$\|h(\cdot,0)\|_{L^1}\le C\|u_0\|_{L^1}$; note also
$\int_0^\infty h(\cdot,0)=w_0(\infty)-w_0(0)=e^M-1$. The maximum principle gives
$0<c_0\le w\le C_0$ at positive times, so $\|u(\cdot,t)\|_{L^p}\le C\|h(\cdot,t)\|_{L^p}$,
which is the claim.
\end{proof}

\subsection{Self-similar asymptotic behavior}\label{infinite}

Equation \eqref{ei} is invariant under $u_\lambda(x,t)=\lambda u(\lambda x,\lambda^2t)$, and
solutions invariant under this scaling have the form $u(x,t)=t^{-1/2}f(x/\sqrt t)$. We
characterize the profiles $f$ explicitly.

\begin{prop}\label{TI2}
For every $M \in \mathbb{R}$ there is $f_M \in H^1(0,\infty)$ with
$\int_0^{\infty} f_M = M$ such that
\[
u_M(x,t) = t^{-1/2}\, f_M\!\left(\frac{x}{\sqrt{t}}\right)
\]
solves
\begin{equation}\label{ed}
u_t-u_{xx}=(u^2)_x \quad\text{in } (0,\infty)\times (0,\infty),
\qquad
u_x(0,t)=-u^2(0,t),\quad t>0 .
\end{equation}
Explicitly,
\[
f_M(\xi)=\frac{(e^M-1)\,b(\xi)}{1+(e^M-1)\displaystyle\int_0^\xi b},
\qquad b(\xi)=\pi^{-1/2}e^{-\xi^2/4},
\]
with $f_0\equiv0$. Moreover $u_M(\cdot,t)\rightharpoonup M\delta_0$ weakly-$*$ as $t\to0^+$.
\end{prop}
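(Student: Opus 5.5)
The proof will go entirely through the Hopf--Cole reduction of Theorem~\ref{TI}. A self-similar solution $u=t^{-1/2}f(x/\sqrt t)$ corresponds to $w=\exp(\int_0^x u)=\exp(F(x/\sqrt t))$, where $F(\xi)=\int_0^\xi f$. So instead of solving the nonlinear profile equation directly, I will look for a self-similar solution $w(x,t)=g(x/\sqrt t)$ of the heat equation on the half-line. It must satisfy $w(0,t)=1$ and $w(\infty,t)=e^M$, the latter being the value forced by mass conservation.

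Substituting $w=g(\xi)$, $\xi=x/\sqrt t$, into $w_t=w_{xx}$ gives the linear ODE $g''+\tfrac{\xi}{2}g'=0$. Hence $g'=Ae^{-\xi^2/4}$, and
\[
g(\xi)=1+(e^M-1)\int_0^\xi b,\qquad b(\xi)=\pi^{-1/2}e^{-\xi^2/4}.
\]
The normalisation uses $\int_0^\infty b=1$, which gives $g(0)=1$ and $g(\infty)=e^M$. This $g$ is an error-function profile, so $w(x,t)=g(x/\sqrt t)$ is a classical solution of the heat equation for $t>0$ with $w(0,t)=1$.

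The crucial check is positivity. The function $\int_0^\xi b$ increases from $0$ to $1$, so $g(\xi)$ is a convex combination of $1$ and $e^M$ and lies between $\min\{1,e^M\}$ and $\max\{1,e^M\}$. In particular $g>0$. Therefore $u=w_x/w=t^{-1/2}\,g'(\xi)/g(\xi)$ is well defined, and it gives exactly the stated formula for $f_M=g'/g$. When $M=0$ we get $g\equiv1$ and $f_0\equiv0$.

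To verify \eqref{ed}, I will reverse the computation in Theorem~\ref{T1}. The function $v=\log w$ solves $v_t-v_{xx}=v_x^2$, so $u=v_x$ solves $u_t-u_{xx}=(u^2)_x$. For the boundary condition, $w(0,t)=1$ gives $w_t(0,t)=0$, hence $w_{xx}(0,t)=0$, i.e. $g''(0)=0$; this is also immediate from $g''=-\tfrac{\xi}{2}g'$. Since $u_x+u^2=w_{xx}/w$, this yields $u_x(0,t)=-u^2(0,t)$.

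Next come the mass and regularity properties. The mass is
\[
\int_0^\infty f_M=\log g(\infty)-\log g(0)=M.
\]
For $f_M\in H^1(0,\infty)$, note that $g$ is bounded below by a positive constant. Then $f_M=g'/g$ and $f_M'=g''/g-(g'/g)^2$ are both bounded by $C(1+\xi)e^{-\xi^2/4}$, which is square integrable.

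Finally, the weak-$*$ limit. Fix a test function $\varphi\in C_b([0,\infty))$. The change of variables $x=\xi\sqrt t$ gives
\[
\int_0^\infty u_M(x,t)\varphi(x)\,dx=\int_0^\infty f_M(\xi)\varphi(\xi\sqrt t)\,d\xi .
\]
By dominated convergence, using that $f_M$ is integrable and $\varphi$ is bounded, this tends to $\varphi(0)\int_0^\infty f_M=M\varphi(0)$ as $t\to0^+$.

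I expect no step to be a real obstacle. The only point needing care is the positivity of $g$, which makes the Hopf--Cole quotient legitimate for every real $M$, including $M<0$; the convex-combination argument settles it.
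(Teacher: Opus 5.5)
Your proposal is correct and uses the same Hopf--Cole mechanism as the paper: the linear ODE $g''+\frac{\xi}{2}g'=0$, the explicit profile $g=1+(e^M-1)\int_0^\xi b$, positivity from the monotonicity of $\int_0^\xi b$, and dominated convergence for the weak-$*$ limit. The difference is one of direction. The paper derives the linear ODE from the nonlinear profile equation via the first integral $f'+f^2+\frac{\xi}{2}f=\kappa$ and shows $\kappa=0$, which also characterizes $f_M$ among self-similar profiles; you instead verify directly at the PDE level through $u_x+u^2=w_{xx}/w$, and, unlike the paper, you explicitly check that $f_M\in H^1(0,\infty)$.
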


\begin{proof}
Substituting $u(x,t)=t^{-1/2}f(\xi)$, $\xi=x/\sqrt t$, the profile must satisfy
\begin{equation}\label{ef}
-f''-\frac{\xi}{2}f'-\frac12 f=(f^2)',
\qquad \xi>0,
\qquad
\int_0^\infty f=M .
\end{equation}
Set $g(\xi)=\exp\bigl(\int_0^\xi f\bigr)$, so $g>0$, $g(0)=1$, $g(\infty)=e^M$ and $f=g'/g$. A direct
computation gives
\[
-g''-\frac{\xi}{2}g'=-\Bigl(f'+f^2+\frac{\xi}{2}f\Bigr)g ,
\]
while integrating \eqref{ef} once yields $f'+f^2+\frac{\xi}{2}f=\kappa$ with
$\kappa=f'(0)+f(0)^2$, so that
\[
-g''-\frac{\xi}{2}g'=-\kappa g .
\]
Here $\kappa=0$: it is exactly the conservative condition $f'(0)=-f(0)^2$, and it is forced at infinity. Integrating $(e^{\xi^2/4}g')'=\kappa\,e^{\xi^2/4}g$ gives $e^{\xi^2/4}g'(\xi)=g'(0)+\kappa\int_0^\xi e^{s^2/4}g$. If $\kappa\neq0$ and $g$ were bounded with $g\to e^M$, the right-hand side would give $g'(\xi)\sim 2\kappa e^M/\xi$, hence $g(\xi)\sim 2\kappa e^M\log\xi\to\pm\infty$, contradicting $g(\infty)=e^M$. Thus $\kappa=0$.

Hence $-g''-\frac{\xi}{2}g'=0$ with $g(0)=1$, $g(\infty)=e^M$. Its solution is
\[
g(\xi)=1+(e^M-1)\int_0^\xi b,
\qquad b(\xi)=\pi^{-1/2}e^{-\xi^2/4},
\]
since $\int_0^\infty b=1$; here $b$ is the unit-mass self-similar profile of the heat
equation, characterized by $-b''-\frac{\xi}{2}b'-\frac12b=0$. Conversely, defining $g$ by
this formula and $f_M:=g'/g$, one checks that $g>0$: as $\int_0^\xi b$ increases from $0$ to
$1$, $g$ ranges monotonically between $1$ and $e^M$, so $g\ge\min(1,e^M)>0$ for every real
$M$. Reversing the computation above shows that $f_M$ solves \eqref{ef}, and
$\int_0^\infty f_M=\ln g(\infty)-\ln g(0)=M$. For $M=0$ the formula gives $g\equiv1$ and
$f_0\equiv0$, consistently.

Finally, for $\varphi\in C_c([0,\infty))$, the change of variable $x=\xi\sqrt t$ gives
$\int_0^\infty u_M(x,t)\varphi(x)\,dx=\int_0^\infty f_M(\xi)\varphi(\xi\sqrt t)\,d\xi\to
\varphi(0)\int_0^\infty f_M=M\varphi(0)$ as $t\to0^+$, by dominated convergence, since
$f_M\in L^1$.
\end{proof}

\begin{teo}\label{is}
Let $u_0\in L^1(0,\infty)$ with mass $M$, let $u$ solve \eqref{ei} and let $u_M$ be as in
Proposition~\ref{TI2}. Then
\begin{equation}\label{dss}
t^{\frac12\left(1-\frac1p\right)}
\|u(\cdot,t)-u_M(\cdot,t)\|_{L^p(0,\infty)}
\longrightarrow0,
\qquad t\to\infty,
\end{equation}
for all $1\le p\le\infty$.
\end{teo}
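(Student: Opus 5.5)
Everything will be read off the Hopf--Cole representation: compare $w$ with the explicit self-similar $g$ and pass the difference to $u=w_x/w$.

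\textbf{Setup.} Let $w$ solve the half-line problem of Theorem~\ref{TI}, with $w(0,t)=1$, and let $h=w_x$ solve the Neumann heat problem with datum $h_0=w_0u_0\in L^1(0,\infty)$ and $\int_0^\infty h_0=e^M-1$. The self-similar solution corresponds to
\[
G(x,t)=1+(e^M-1)\int_0^{x/\sqrt t}b,
\]
with $G_x(x,t)=(e^M-1)\,t^{-1/2}b(x/\sqrt t)$. Note that $2t^{-1/2}b(x/\sqrt t)\cdot\frac12$ is exactly the even-extended Neumann heat kernel at $y=0$, namely $t^{-1/2}\pi^{-1/2}e^{-x^2/4t}=2\,\Gamma(x,t)$ with $\Gamma$ the Gaussian. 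Hence $G_x$ is the Neumann evolution of the measure $(e^M-1)\delta_0$. Then
\[
u-u_M=\frac{h}{w}-\frac{G_x}{G}=\frac{h-G_x}{w}-\frac{G_x\,(w-G)}{wG}.
\]
By \eqref{eq:wbounds} and the corresponding bounds for $G$ (it lies between $1$ and $e^M$), both $w$ and $G$ are bounded above and below uniformly in $t$. It therefore suffices to show
\[
t^{\frac12(1-\frac1p)}\|h-G_x\|_{L^p}\to0,\qquad \|w-G\|_{L^\infty}\to0,
\]
together with $t^{\frac12(1-\frac1p)}\|G_x\|_{L^p}\le C$, which is immediate from scaling.

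\textbf{Key step.} Both $h$ and $G_x$ are Neumann heat evolutions (equivalently, whole-line evolutions of the even extensions) of the finite measures $h_0$ and $(e^M-1)\delta_0$, which have equal mass. The classical heat-kernel asymptotics then give
\[
t^{\frac12(1-\frac1p)}\|h(t)-G_x(t)\|_{L^p}\to0\quad\text{for all }1\le p\le\infty.
\]
The argument is the standard one. First approximate $h_0$ in $L^1$ by $\phi\in C_c$ with the same mass. The error is controlled by $Ct^{-\frac12(1-\frac1p)}\|h_0-\phi\|_{L^1}$. For $\phi$ one writes
\[
\Gamma(x-y,t)-\Gamma(x,t)=-\int_0^1 y\,\Gamma_x(x-sy,t)\,ds,
\]
which gives an extra factor $t^{-1/2}$. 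Here the even extension ensures that the reflected kernel behaves the same way.

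For $w-G$, write
\[
w(x,t)-G(x,t)=\int_0^x (h-G_x)(z,t)\,dz,
\]
using $w(0,t)=G(0,t)=1$. Hence $\|w-G\|_{L^\infty}\le\|h-G_x\|_{L^1}\to0$, which is the $p=1$ case.

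\textbf{Conclusion.} Combining, we get
\[
t^{\frac12(1-\frac1p)}\|u-u_M\|_{L^p}\le C\,t^{\frac12(1-\frac1p)}\|h-G_x\|_{L^p}+C\,t^{\frac12(1-\frac1p)}\|G_x\|_{L^p}\,\|w-G\|_{L^\infty}\to0.
\]

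The main obstacle is only the density step for a general $L^1$ datum without moments. It is handled by the approximation argument above, using the uniform $L^1\to L^p$ bound of the Neumann semigroup already used in Theorem~\ref{TI}.
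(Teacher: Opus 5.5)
Your proposal is correct and is essentially the paper's own proof. You use the same splitting $u-u_M=\frac{h-G_x}{w}-\frac{G_x(w-G)}{wG}$, the same reduction to whole-line heat asymptotics for the even extension of $h=w_x$ (whose limit restricted to $x>0$ is exactly $G_x$), and the same bound $\|w-G\|_{L^\infty}\le\|h-G_x\|_{L^1}$; the only addition is that you sketch the standard density-plus-Taylor argument behind those asymptotics, which the paper simply cites.
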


\begin{proof}
Let $w$ be the Hopf--Cole transform of $u$ and set
$w_M(x,t)=1+(e^M-1)\int_0^{x/\sqrt t}b$, so that
$(w_M)_x=(e^M-1)t^{-1/2}b(x/\sqrt t)$ and $u_M=(w_M)_x/w_M$. As in the proof of
Theorem~\ref{TI}, $h:=w_x$ solves the heat equation on $(0,\infty)$ with
$h_x(0,t)=0$ and $\int_0^\infty h(\cdot,0)=e^M-1$; its even extension across
$x=0$ solves the heat equation on $\mathbb R$ with mass $2(e^M-1)$, so the
classical whole-line asymptotics give convergence, after rescaling, to
$2(e^M-1)$ times the unit Gaussian, which restricted to $x>0$ is exactly
$(w_M)_x$. Hence
$t^{\frac12(1-\frac1p)}\|w_x(\cdot,t)-(w_M)_x(\cdot,t)\|_{L^p(0,\infty)}\to0$ for
$1\le p\le\infty$. By the maximum principle $0<c\le w,w_M\le C$ uniformly, and
\[
u-u_M=\frac{w_x-(w_M)_x}{w}-\frac{(w_M)_x}{w\,w_M}\,(w-w_M).
\]
The first term is handled by the estimate just established. For the second, $w-w_M=\int_0^x(w_y-(w_M)_y)$
gives $\|w(\cdot,t)-w_M(\cdot,t)\|_{L^\infty}\le\|w_x-(w_M)_x\|_{L^1}\to0$, while
$t^{\frac12(1-\frac1p)}(w_M)_x$ is bounded in $L^p$. This is \eqref{dss}.
\end{proof}

Theorem~\ref{is} gives no rate.  For compactly supported data the next two
results first establish the $t^{-1/2}$ rate in $C^0$ and then propagate the
same rate to every local $C^k$ norm, as required in Section~\ref{sm}.

\begin{teo}\label{thm:halflinerate}
Let $u_0\in L^1(0,\infty)$ with mass $M$ and $\operatorname{supp}u_0\subset[0,R_0]$, and
let $v^\infty(x,t)=t^{1/2}u(x\sqrt t,t)$ be the rescaled solution of \eqref{ei}. Then,
for every $R>0$ there is $C=C(\|u_0\|_{L^1},R_0,R)>0$ such that
\begin{equation}\label{eq:hlrate-statement}
\|v^\infty(\cdot,t)-f_M\|_{L^\infty(0,R)}\le C\,t^{-1/2},
\qquad t\ge1 .
\end{equation}
\end{teo}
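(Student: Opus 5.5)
We argue through the Hopf--Cole reduction used in Theorems~\ref{TI} and \ref{is}. Let $w$ be the transform of $u$ and $h:=w_x$. As shown in the proof of Theorem~\ref{TI}, $h$ solves the heat equation on $(0,\infty)$ with $h_x(0,t)=0$, datum $h_0=w_0u_0\in L^1$ supported in $[0,R_0]$, and $\int_0^\infty h_0=e^M-1$. By even reflection,
\[
h(x,t)=\int_0^{R_0}\bigl(G(x-y,t)+G(x+y,t)\bigr)h_0(y)\,dy,\qquad G(z,t)=(4\pi t)^{-1/2}e^{-z^2/(4t)} .
\]
The comparison function is $(w_M)_x=(e^M-1)t^{-1/2}b(x/\sqrt t)$, where $b(\xi)=\pi^{-1/2}e^{-\xi^2/4}=2\sqrt{t}\,G(x,t)$ at $x=\xi\sqrt t$. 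In similarity variables we therefore set
\[
H(\xi,t):=\sqrt t\,h(\xi\sqrt t,t)=\int_0^{R_0}\tfrac12\bigl(b(\xi-y/\sqrt t)+b(\xi+y/\sqrt t)\bigr)h_0(y)\,dy .
\]
Since $|b(\xi\pm s)-b(\xi)|\le\|b'\|_\infty|s|$ and $y\le R_0$, we get
\[
\sup_{\xi\ge0}\bigl|H(\xi,t)-(e^M-1)b(\xi)\bigr|\le \|b'\|_\infty R_0\,\|h_0\|_{L^1}\,t^{-1/2}.
\]
Here $\|h_0\|_{L^1}\le e^{\|u_0\|_{L^1}}\|u_0\|_{L^1}$.

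Next we control $w$ itself. Put $g(\xi,t):=w(\xi\sqrt t,t)=1+\int_0^\xi H(\eta,t)\,d\eta$, using $w(0,t)=1$. For $\xi\in[0,R]$,
\[
\Bigl|g(\xi,t)-\Bigl(1+(e^M-1)\textstyle\int_0^\xi b\Bigr)\Bigr|\le R\,\|b'\|_\infty R_0\|h_0\|_{L^1}\,t^{-1/2}.
\]
We restrict to $[0,R]$ because the bound integrates the pointwise error, and this is why $C$ depends on $R$. A cleaner alternative works uniformly in $\xi$: integrate $b(\xi\pm s)-b(\xi)$ exactly, which gives an error bounded by $2|s|\,\|b\|_\infty$. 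Either version suffices. By \eqref{eq:wbounds}-type maximum principle bounds, both $g$ and $g_M:=1+(e^M-1)\int_0^\xi b$ lie between two positive constants depending only on $\|u_0\|_{L^1}$ and $M$. Note that $|M|\le\|u_0\|_{L^1}$.

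Finally we conclude with the quotient identity. Since $v^\infty(\xi,t)=\sqrt t\,w_x/w=H/g$ and $f_M=(e^M-1)b/g_M$,
\[
v^\infty-f_M=\frac{H-(e^M-1)b}{g}-\frac{(e^M-1)b}{g\,g_M}\,(g-g_M).
\]
The first term is $O(t^{-1/2})$ by the bound on $H$ together with the lower bound on $g$. The second term is $O(t^{-1/2})$ on $[0,R]$ by the bound on $g-g_M$ together with the lower bounds on $g$ and $g_M$.

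We expect no step here to be a serious obstacle. The only point needing care is that every constant is uniform in $t\ge1$ and depends only on $(\|u_0\|_{L^1},R_0,R)$. This holds because the positive lower bound on $w$ comes from the maximum principle and is independent of time. The restriction $t\ge1$ is only cosmetic.
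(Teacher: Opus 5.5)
Your proof is correct. It shares the paper's skeleton: the Hopf--Cole reduction, a linear estimate drawn from the compact support of $u_0$, the same quotient identity for $v^\infty-f_M$, and maximum-principle bounds on $w$ and $w_M$. Where you differ is the linear estimate itself.

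The paper works with $d=w-w_M$, which solves the Dirichlet problem with compactly supported datum $w_0-e^M$. It applies standard heat-kernel derivative bounds to the odd extension of that datum, obtaining $\|d_x(\cdot,t)\|_{L^1}=O(t^{-1/2})$ and $\|d_x(\cdot,t)\|_{L^\infty}=O(t^{-1})$ in physical variables on the whole half-line. This is also the form it iterates to all $\partial_x^m d$ in Theorem~\ref{thm:halflineCk}.

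You instead use the even (Neumann) representation of $h=w_x$ and a first-moment, mean-value bound on the Gaussian profile, directly in similarity variables, and recover $g-g_M$ by integration. The two are dual descriptions of the same object: the distributional derivative of the odd extension of $d_0$ is the even extension of $h_0$ minus $2(e^M-1)\delta_0$. Your version is more explicit and makes the mechanism visible, since the mass of $h_0$ sits at similarity distance at most $R_0/\sqrt t$ from the origin.

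It also shows at once that the stated rate is not sharp. Because you average $b(\xi-s)$ and $b(\xi+s)$, the first-order Taylor terms cancel, so $|H-(e^M-1)b|\le\tfrac12\|b''\|_\infty R_0^2\|h_0\|_{L^1}\,t^{-1}$. Hence $\|v^\infty(\cdot,t)-f_M\|_{L^\infty(0,R)}=O(t^{-1})$, in line with the paper's remark that a faster rate is observed numerically. In the paper's framework the same gain comes from the vanishing mass of the odd extension of $d_0$.
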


\begin{proof}
Let $w$ and $w_M$ be the Hopf--Cole transforms of $u$ and of the self-similar solution
$u_M$, and set $d:=w-w_M$. Then $d$ solves the homogeneous Dirichlet heat equation with
initial datum $d_0(x)=w_0(x)-e^M$ on $x>0$, which is compactly supported. Its odd extension belongs to $L^1(\mathbb R)$. Standard heat-semigroup
derivative estimates applied to that extension give
\[
\|w_x(\cdot,t)-(w_M)_x(\cdot,t)\|_{L^1(0,\infty)}=O(t^{-1/2}),
\qquad
\|w_x(\cdot,t)-(w_M)_x(\cdot,t)\|_{L^\infty(0,\infty)}=O(t^{-1}),
\]
the constants depending only on $\|u_0\|_{L^1}$ and $R_0$. As in the proof of
Theorem~\ref{is}, $0<c\le w,w_M\le C$ uniformly, and
\[
u-u_M=\frac{w_x-(w_M)_x}{w}-\frac{(w_M)_x}{w\,w_M}\,(w-w_M),
\]
where $\|w(\cdot,t)-w_M(\cdot,t)\|_{L^\infty}\le\|w_x-(w_M)_x\|_{L^1}=O(t^{-1/2})$ and
$|(w_M)_x(x,t)|\le Ct^{-1/2}$. Hence $|u-u_M|\le C t^{-1}$ for $0\le x\le R\sqrt t$, and
multiplying by $t^{1/2}$ gives \eqref{eq:hlrate-statement}.
\end{proof}

\begin{remark}

The exponent in \eqref{eq:hlrate-statement} is not claimed to be optimal;
numerically one observes a faster rate on compact sets in similarity variables.
Theorem~\ref{thm:halflineCk} below shows that the same $t^{-1/2}$ bound holds in
every local $C^k$ norm, which is the quantitative form used later.
\end{remark}

\begin{teo}\label{thm:halflineCk}
Under the hypotheses of Theorem~\ref{thm:halflinerate}, for every $k\in\mathbb N_0$ and
every $R>0$, there is $C_k>0$ such that
\[
\|v^\infty(\cdot,t)-f_M\|_{C^k([0,R])}\le C_k t^{-1/2},
\qquad t\ge1 .
\]
\end{teo}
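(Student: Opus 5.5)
We work at the level of the Hopf--Cole transform in similarity variables. Set $W(\xi,t):=w(\xi\sqrt t,t)$ and $W_M(\xi):=w_M(\xi\sqrt t,t)=1+(e^M-1)\int_0^\xi b$, so that
\[
v^\infty(\xi,t)=\frac{W_\xi(\xi,t)}{W(\xi,t)},\qquad f_M=\frac{W_M'}{W_M}.
\]
The map $(P,Q)\mapsto P/Q$ is smooth on $\{Q\ge c_0\}$, and the lower bound $W,W_M\ge c_0>0$ holds by the maximum principle, exactly as in Theorem~\ref{thm:halflinerate}. It therefore suffices to prove
\[
\|W(\cdot,t)-W_M\|_{C^{k+1}([0,R])}\le C_k t^{-1/2},\qquad t\ge1,
\]
for every $k$. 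Indeed, $\partial_\xi^k$ of a quotient is a polynomial in derivatives of numerator and denominator up to order $k$, divided by a power of the denominator. A telescoping (Leibniz) argument then bounds the $C^k$ difference of the quotients by the $C^{k+1}$ difference of $W$ and $W_M$, times constants depending on $\|W_M\|_{C^{k+1}([0,R])}$, which is finite and $t$-independent, and on $\|W(\cdot,t)\|_{C^{k+1}([0,R])}$, which is bounded once the difference estimate is in hand.

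To bound $\partial_\xi^{j}(W-W_M)=t^{j/2}\,\partial_x^{j}d(\xi\sqrt t,t)$ with $d=w-w_M$, we use the representation from the proof of Theorem~\ref{thm:halflinerate}. There $d$ is the Dirichlet heat evolution of $d_0=w_0-e^M$, which is bounded and supported in $[0,R_0]$. Hence $d=G_t*\tilde d_0$, where $\tilde d_0$ is the odd extension, lies in $L^1(\mathbb R)$, and has $\|\tilde d_0\|_{L^1}\le 2R_0\,(e^{\|u_0\|_{L^1}}+e^{M})$. The standard estimate
\[
\|\partial_x^j G_t*g\|_{L^\infty}\le C_j t^{-(j+1)/2}\|g\|_{L^1}
\]
gives, for $j\ge0$,
\[
\|\partial_\xi^j(W-W_M)\|_{L^\infty}\le C_j\, t^{j/2}\,t^{-(j+1)/2}=C_j t^{-1/2}.
\]
The case $j=0$ can alternatively be read off from the $L^1$ bound on $w_x-(w_M)_x$ in Theorem~\ref{thm:halflinerate}. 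This holds uniformly on all of $\xi\ge0$, not only on $[0,R]$, and the constant depends only on $\|u_0\|_{L^1}$, $R_0$ and $j$.

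Combining the two steps gives $\|v^\infty(\cdot,t)-f_M\|_{C^k([0,R])}\le C_k t^{-1/2}$. One could sharpen this, since $d_0$ is odd and the leading term is actually a dipole of order $t^{-1}$, but that is not needed. The only point requiring care is the bookkeeping in the quotient estimate: we must check that no negative power of $t$ enters through the scaling. This is automatic because the $\xi$-derivatives of $W$ scale exactly like the heat kernel in similarity variables. In particular, the derivative count $k+1$ on $W$ versus $k$ on $v^\infty$ causes no loss, because the bound above holds uniformly for every $j$.
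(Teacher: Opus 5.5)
Your proof is correct and follows the paper's route: heat-kernel derivative bounds for the odd extension of $d=w-w_M$, then a Leibniz/quotient transfer from $w$ to $u=w_x/w$. The only difference is bookkeeping. You rescale first to $W(\xi,t)=w(\xi\sqrt t,t)$, so every $\xi$-derivative of $W-W_M$ is directly $O(t^{-1/2})$ and $W_M$ is $t$-independent, whereas the paper tracks the homogeneities $t^{-m/2}$ of $\partial_x^m w$ and $\partial_x^m w_M$ in physical variables and multiplies by $t^{(k+1)/2}$ at the end.
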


\begin{proof}
Use the notation of Theorem~\ref{thm:halflinerate} and put
$d=w-w_M$.  Its initial datum $d_0=w_0-e^M$ is supported in $[0,R_0]$;
its odd extension belongs to $L^1(\mathbb R)$.  Direct differentiation of the
whole-line heat kernel therefore gives, for every $m\ge0$,
\[
 \|\partial_x^m d(\cdot,t)\|_{L^\infty(0,\infty)}
 \le C_m t^{-(m+1)/2},\qquad t\ge1.
\]
The same kernel calculation gives
$|\partial_x^m w|+|\partial_x^m w_M|\le C_m t^{-m/2}$ for $m\ge1$,
while $w,w_M$ and their reciprocals are uniformly bounded.  Since
\[
 u-u_M=\frac{d_x}{w}-\frac{(w_M)_x}{ww_M}\,d,
\]
the Leibniz rule and the derivative formula for a reciprocal show
\[
 \|\partial_x^k(u-u_M)(\cdot,t)\|_{L^\infty(0,\infty)}
 \le C_k t^{-(k+2)/2}.
\]
Finally,
$\partial_\xi^k(v^\infty-f_M)
=t^{(k+1)/2}\partial_x^k(u-u_M)$ at $x=\xi\sqrt t$.
Multiplication by $t^{(k+1)/2}$ proves the asserted $t^{-1/2}$ rate, uniformly
on $[0,R]$ (indeed, on the entire half-line).
\end{proof}

\section{The diffusive scale: transition profiles and sharp rates}\label{sm}

Throughout this section $u^{L}$ and $u^{\infty}$ denote the solutions of \eqref{eb} and
\eqref{ei} with the same initial datum $u_{0}\in L^{1}(0,\infty)$, assumed to satisfy
\begin{equation}\label{eq:hyp}
\operatorname{supp}u_{0}\subset[0,R_{0}],
\qquad
M:=\int_{0}^{\infty}u_{0}(x)\,dx ,
\end{equation}
and $w^{L}=\exp\big(\int_{0}^{x}u^{L}\big)$, $w^{\infty}=\exp\big(\int_{0}^{x}u^{\infty}\big)$
are their Hopf--Cole transforms. Thus $w^{L}$ solves the heat equation on $(0,L)$ with
$w^{L}(0,t)=1$ and $w^{L}(L,t)=e^{M}$, while $w^{\infty}$ solves the heat equation on
$(0,\infty)$ with $w^{\infty}(0,t)=1$; both have initial datum
$w_{0}=\exp\big(\int_{0}^{x}u_{0}\big)$, which equals $e^{M}$ on $[R_{0},\infty)$ and
satisfies
\begin{equation}\label{eq:w0bounds}
e^{-\|u_{0}\|_{L^{1}}}\le w_{0}\le e^{\|u_{0}\|_{L^{1}}},
\qquad |M|\le\|u_{0}\|_{L^{1}} .
\end{equation}

Unless stated otherwise, constants in this section may depend on
$\|u_0\|_{L^1}$, $R_0$, the observation radius $R$, the derivative order $k$,
and an indicated loss $\varepsilon$, but never on $L$, $t$, or
$c=t/L^2$.  Dependence on a fixed value of $c$ is displayed explicitly in the
transition limit of Subsection~\ref{ss:transition}.  This convention is
important only for the estimates; all limiting statements specify which
parameters are held fixed.

At diffusive times $t=cL^{2}$ we use the similarity variable $\xi=x/\sqrt t$ and the
rescaled profiles
\begin{equation}\label{eq:rescaled}
v^{L}(\xi,t)=\sqrt t\,u^{L}(\xi\sqrt t,t),
\qquad
v^{\infty}(\xi,t)=\sqrt t\,u^{\infty}(\xi\sqrt t,t),
\end{equation}
the first being defined for $0\le\xi\le L/\sqrt t=c^{-1/2}$.

Section \ref{ss:transition} produces an explicit one-parameter family
$\{\Phi_{c}\}_{c>0}$, the $L\to\infty$ limit of $v^{L}(\cdot,cL^{2})$ for each fixed
$c>0$. In similarity coordinates it degenerates to the self-similar profile $f_{M}$ as
$c\to0$, while after the domain-scale rescaling $y=\xi\sqrt c$ it degenerates to the
unit-interval stationary profile $\mathcal U_{M}$ as $c\to\infty$. Section \ref{ss:sharp}
determines the sharp exponential scale of the first degeneration, with a matching lower
bound. Section
\ref{ss:uniform} establishes the corresponding uniform-in-$L$ comparison of $v^{L}$ with
$v^{\infty}$ in $C^{k}$, with the same exponent. Section \ref{ss:regimes} describes the
dynamics at all time scales.

\subsection{The transition family}\label{ss:transition}

Fix $c>0$ and set
\begin{equation}\label{eq:hatw}
y=\frac{x}{L},\qquad \tau=\frac{t}{L^{2}},\qquad
\hat w^{L}(y,\tau):=w^{L}(yL,\tau L^{2}),
\end{equation}
so that $\hat w^{L}$ solves
\begin{equation}\label{eq:hatwpb}
\begin{cases}
\hat w_{\tau}=\hat w_{yy}, & (y,\tau)\in(0,1)\times(0,\infty),\\[1mm]
\hat w(0,\tau)=1,\quad \hat w(1,\tau)=e^{M}, & \tau>0,\\[1mm]
\hat w(y,0)=w_{0}(yL), & y\in(0,1).
\end{cases}
\end{equation}
By \eqref{eq:hyp} the initial datum of \eqref{eq:hatwpb} equals $e^{M}$ on $[R_{0}/L,1]$;
as $L\to\infty$ it converges to the constant $e^{M}$, the discrepancy being confined to a
layer of width $R_{0}/L$ at the origin. Let $\varphi$ solve the limit problem
\begin{equation}\label{eq:phipb}
\begin{cases}
\varphi_{\tau}=\varphi_{yy}, & (y,\tau)\in(0,1)\times(0,\infty),\\[1mm]
\varphi(0,\tau)=0,\quad\varphi(1,\tau)=1, & \tau>0,\\[1mm]
\varphi(y,0)=1, & y\in(0,1),
\end{cases}
\end{equation}
and set
\begin{equation}\label{eq:Psi}
\Psi(y,\tau):=1+(e^{M}-1)\varphi(y,\tau),
\end{equation}
which solves the heat equation on $(0,1)$ with $\Psi(0,\tau)=1$, $\Psi(1,\tau)=e^{M}$ and
$\Psi(\cdot,0)\equiv e^{M}$. Since $\varphi-y$ solves the heat equation with homogeneous
Dirichlet conditions and initial datum $1-y$, whose coefficients in the basis
$\{\sin(n\pi y)\}_{n\ge1}$ are $2/(n\pi)$,
\begin{equation}\label{eq:phiseries}
\varphi(y,\tau)=y+\frac{2}{\pi}\sum_{n\ge1}\frac{\sin(n\pi y)}{n}\,e^{-n^{2}\pi^{2}\tau},
\qquad
\varphi_{y}(y,\tau)=1+2\sum_{n\ge1}\cos(n\pi y)\,e^{-n^{2}\pi^{2}\tau},
\end{equation}
the series converging in $C^{\infty}([0,1])$ for every $\tau>0$. By the maximum principle
$0\le\varphi\le1$, whence
\begin{equation}\label{eq:Psibounds}
0<\min(1,e^{M})\le\Psi\le\max(1,e^{M}).
\end{equation}

\begin{lemma}\label{lem:hatw}
Assume \eqref{eq:hyp}. For every $j\in\mathbb N_{0}$ there is
$C_{j}=C_{j}(\|u_{0}\|_{L^{1}},R_{0},j)$ such that, for all $\tau>0$ and $L\ge R_{0}$,
\begin{equation}\label{eq:hatwest}
\big\|\partial_{y}^{j}\big(\hat w^{L}(\cdot,\tau)-\Psi(\cdot,\tau)\big)\big\|_{L^{\infty}(0,1)}
\le
\frac{C_{j}}{L}\,\min\{\tau,1\}^{-(j+1)/2}.
\end{equation}
\end{lemma}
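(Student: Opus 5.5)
The difference $D:=\hat w^{L}-\Psi$ solves the heat equation on $(0,1)$ with homogeneous Dirichlet conditions, since $\hat w^L$ and $\Psi$ share the boundary values $1$ and $e^{M}$. Its initial datum is
\[
D_0(y)=w_0(yL)-e^{M}.
\]
By \eqref{eq:hyp} and \eqref{eq:w0bounds}, $D_0$ is supported in $[0,R_0/L]$ and bounded by $2e^{\|u_0\|_{L^1}}$. Hence
\[
\|D_0\|_{L^1(0,1)}\le 2e^{\|u_0\|_{L^1}}R_0/L .
\]
The plan is to get the whole $L^{-1}$ from this $L^1$ smallness and to get the $\tau$-dependence from $L^1\to W^{j,\infty}$ smoothing of the Dirichlet heat semigroup on $(0,1)$. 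The two time regimes are treated separately, as in the proof of Theorem~\ref{expt1}.

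\textbf{Short times, $0<\tau\le1$.} Write $D$ through the Dirichlet kernel of $(0,1)$, given by the method of images:
\[
G(y,z,\tau)=\sum_{k\in\mathbb Z}\bigl[g(y-z+2k,\tau)-g(y+z+2k,\tau)\bigr],\qquad g(x,\tau)=(4\pi\tau)^{-1/2}e^{-x^2/4\tau}.
\]
Then $\partial_y^jD(y,\tau)=\int_0^1\partial_y^jG(y,z,\tau)D_0(z)\,dz$. It therefore suffices to show
\[
\sup_{y,z\in[0,1]}|\partial_y^jG(y,z,\tau)|\le C_j\tau^{-(j+1)/2},\qquad 0<\tau\le1 .
\]
Each term satisfies $|\partial_x^jg(x,\tau)|\le C_j\tau^{-(j+1)/2}e^{-x^2/8\tau}$. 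For $y,z\in[0,1]$ and $|k|\ge2$ the arguments have modulus at least $2|k|-2$, so the image sum converges. The far terms are bounded by $C_j\tau^{-(j+1)/2}\sum_k e^{-(|k|-1)^2/2\tau}$, which is $\le C_j\tau^{-(j+1)/2}$ uniformly for $\tau\le1$. Multiplying by $\|D_0\|_{L^1}$ gives \eqref{eq:hatwest} for $\tau\le1$.

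\textbf{Long times, $\tau\ge1$.} Expand in the eigenbasis:
\[
D(y,\tau)=\sum_{n\ge1}d_n e^{-n^2\pi^2\tau}\sin(n\pi y),\qquad |d_n|=\Bigl|2\int_0^1D_0(z)\sin(n\pi z)\,dz\Bigr|\le2\|D_0\|_{L^1}.
\]
Differentiating termwise,
\[
\|\partial_y^jD(\cdot,\tau)\|_\infty\le2\|D_0\|_{L^1}\sum_n(n\pi)^je^{-n^2\pi^2}\le C_j/L
\]
for every $\tau\ge1$. Since $\min\{\tau,1\}=1$ there, this is \eqref{eq:hatwest}. The same series also works for $\tau\le1$, because $\sum_n n^je^{-n^2\pi^2\tau}\lesssim\tau^{-(j+1)/2}$ by comparison with $\int_0^\infty s^je^{-\pi^2s^2\tau}\,ds$. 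That gives an alternative single argument covering both regimes.

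\textbf{Where the care goes.} The main obstacle is to make the constants depend only on $\|u_0\|_{L^1}$, $R_0$ and $j$, and never on $L$ or $\tau$. The key point is that the datum enters only through $\|D_0\|_{L^1}$, and that bound is already uniform. The second point of care is the justification that $D$ is the semigroup applied to $D_0$: $D_0$ is continuous, vanishes at $y=0$, and vanishes on $[R_0/L,1]$ because $L\ge R_0$. Uniqueness of the classical solution, recalled in Theorem~\ref{T1}, then identifies $D$ with the kernel and series representations. The Fourier-series route is the one I would write out, since it handles every $j$ uniformly with one elementary sum estimate.
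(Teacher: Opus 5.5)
Your argument is correct and is essentially the paper's proof. Both use the $L^{1}$ smallness $\|D_0\|_{L^1}\le 2e^{\|u_0\|_{L^1}}R_0/L$ of the datum, combined with pointwise bounds on $\partial_y^jG_1$: the image sum gives $C_j\tau^{-(j+1)/2}$ for $\tau\le1$, and the sine series gives a constant for $\tau\ge1$. Your remark that the sine series alone covers both regimes, via $\sum_n n^je^{-n^2\pi^2\tau}\lesssim\tau^{-(j+1)/2}$, is a valid shortcut.

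One correction concerns your justification of the representation $D=\int_0^1 G_1 D_0$. You write that $D_0$ vanishes at $y=0$, but $D_0(0)=w_0(0)-e^{M}=1-e^{M}\neq0$ when $M\neq0$. So $D$, like $\Psi$, is not a classical solution continuous at the corner $(0,0)$, and uniqueness of classical solutions cannot be applied to $D$ directly.

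The representation is still true, and it follows from linearity. Let $S(\tau)$ denote the Dirichlet heat semigroup on $(0,1)$ and set $W(y)=1+(e^{M}-1)y$.
\begin{enumerate}
\item Apply uniqueness to $\hat w^{L}-W$. Its continuous datum $w_0(L\,\cdot)-W$ does vanish at both endpoints, using $L\ge R_0$ at $y=1$. Hence $\hat w^{L}-W=S(\tau)\big(w_0(L\,\cdot)-W\big)$.
\item By \eqref{eq:phiseries}, $\Psi-W=(e^{M}-1)\,S(\tau)(1-y)$.
\item Subtracting the two gives $D=S(\tau)\big(w_0(L\,\cdot)-e^{M}\big)=S(\tau)D_0$.
\end{enumerate}
With this fix the rest of your proof goes through unchanged.
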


\begin{proof}
Set $\eta:=\hat w^{L}-\Psi$. Both functions solve the heat equation on $(0,1)$ with the
same boundary data, so $\eta$ solves it with homogeneous Dirichlet conditions and initial
datum $\eta_{0}(y)=w_{0}(yL)-e^{M}$. By \eqref{eq:hyp} and \eqref{eq:w0bounds}, $\eta_{0}$
is supported in $[0,R_{0}/L]$ and bounded by $2e^{\|u_{0}\|_{L^{1}}}$, so
\begin{equation}\label{eq:eta0L1}
\|\eta_{0}\|_{L^{1}(0,1)}\le\frac{A}{L},
\qquad A:=2e^{\|u_{0}\|_{L^{1}}}R_{0}.
\end{equation}
Let $G_{1}$ be the Dirichlet heat kernel of $(0,1)$. For $0<\tau\le1$ the method of images
represents $G_{1}$ as an absolutely convergent signed sum of Gaussians of variance
$2\tau$, and termwise differentiation gives
$|\partial_{y}^{j}G_{1}(\tau,y,\zeta)|\le C_{j}\tau^{-(j+1)/2}$ uniformly on $(0,1)^{2}$.
For $\tau\ge1$ the spectral representation
$G_{1}(\tau,y,\zeta)=2\sum_{n\ge1}e^{-n^{2}\pi^{2}\tau}\sin(n\pi y)\sin(n\pi\zeta)$ gives
$|\partial_{y}^{j}G_{1}(\tau,y,\zeta)|\le C_{j}e^{-\pi^{2}\tau}\le C_{j}$. Since
$\eta(y,\tau)=\int_{0}^{1}G_{1}(\tau,y,\zeta)\eta_{0}(\zeta)\,d\zeta$, estimate
\eqref{eq:hatwest} follows from \eqref{eq:eta0L1}.
\end{proof}

\begin{definition}\label{def:Phic}
For $c>0$ and $0\le\xi\le c^{-1/2}$,
\begin{equation}\label{eq:Phic}
\Phi_{c}(\xi):=\sqrt c\,
\frac{(e^{M}-1)\,\varphi_{y}\big(\xi\sqrt c,c\big)}
     {1+(e^{M}-1)\,\varphi\big(\xi\sqrt c,c\big)}
=\sqrt c\,\frac{\Psi_{y}(\xi\sqrt c,c)}{\Psi(\xi\sqrt c,c)} .
\end{equation}
\end{definition}

By \eqref{eq:Psibounds} the denominator is bounded away from zero for every real $M$, so
$\Phi_{c}$ is well defined and smooth on the closed interval $[0,c^{-1/2}]$.

\begin{teo}\label{thm:Phi}
Assume \eqref{eq:hyp} and fix $c>0$, $k\in\mathbb N_{0}$ and $R\in(0,c^{-1/2})$. Then there
is $C=C(\|u_{0}\|_{L^{1}},R_{0},R,k,c)$ such that, for $t=cL^{2}$ and all $L\ge R_{0}$,
\begin{equation}\label{eq:PhiConv}
\big\|v^{L}(\cdot,cL^{2})-\Phi_{c}\big\|_{C^{k}([0,R])}\le\frac{C}{L}.
\end{equation}
In particular $v^{L}(\cdot,cL^{2})\to\Phi_{c}$ in $C^{k}_{\mathrm{loc}}\big([0,c^{-1/2})\big)$
as $L\to\infty$, for every fixed $c>0$.
\end{teo}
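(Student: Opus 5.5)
The plan is to reduce \eqref{eq:PhiConv} to Lemma~\ref{lem:hatw} through an exact algebraic identity for $v^{L}$. Since $u^{L}=w^{L}_{x}/w^{L}$ and $w^{L}(x,t)=\hat w^{L}(x/L,t/L^{2})$, we have $u^{L}(x,t)=L^{-1}\hat w^{L}_{y}/\hat w^{L}$ evaluated at $(x/L,t/L^{2})$. With $t=cL^{2}$ and $x=\xi\sqrt t=\xi\sqrt c\,L$, this gives
\[
v^{L}(\xi,cL^{2})=\sqrt{c}\,L\,u^{L}(\xi\sqrt c L,cL^{2})=\sqrt c\,\frac{\hat w^{L}_{y}(\xi\sqrt c,c)}{\hat w^{L}(\xi\sqrt c,c)} .
\]
This has exactly the same form as \eqref{eq:Phic}, with $\hat w^{L}$ in place of $\Psi$. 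The problem is therefore a stability estimate for the map $F\mapsto \sqrt c\,F_{y}/F$ near $F=\Psi(\cdot,c)$, evaluated on $y=\xi\sqrt c\in[0,R\sqrt c]\subset[0,1)$.

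The steps are as follows.

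First, apply Lemma~\ref{lem:hatw} at the fixed time $\tau=c$. This gives $\|\partial_{y}^{j}(\hat w^{L}-\Psi)(\cdot,c)\|_{L^{\infty}(0,1)}\le C_{j}L^{-1}\min\{c,1\}^{-(j+1)/2}$ for $j\le k+1$. The right-hand side is $C(c,k)/L$.

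Second, bound the denominators. By \eqref{eq:Psibounds}, $\Psi(\cdot,c)\ge m:=\min(1,e^{M})>0$. The $j=0$ estimate then gives $\hat w^{L}(\cdot,c)\ge m/2$ once $L\ge L_{1}(c)$. For the finitely many remaining $L\in[R_{0},L_{1}]$, we can instead use the maximum-principle lower bound \eqref{eq:wbounds} for $w^{L}$, which is uniform in $L$. Then $|v^{L}-\Phi_{c}|$ is trivially bounded by a constant, and a constant is $\le C'/L$ on a bounded range of $L$. The same bounds also give uniform control of $\|\Psi(\cdot,c)\|_{C^{k+1}}$ through the series \eqref{eq:phiseries}, and hence of $\|\hat w^{L}(\cdot,c)\|_{C^{k+1}}$.

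Third, write
\[
\frac{\hat w_{y}}{\hat w}-\frac{\Psi_{y}}{\Psi}=\frac{(\hat w-\Psi)_{y}}{\hat w}-\frac{\Psi_{y}}{\hat w\Psi}(\hat w-\Psi).
\]
We differentiate $k$ times in $y$ using Leibniz and the Faà di Bruno bound for reciprocals of functions bounded below. Every term contains exactly one factor $\partial_{y}^{j}(\hat w-\Psi)$ with $j\le k+1$, while all other factors are bounded. This gives an $O(1/L)$ bound in $C^{k}([0,1])$ in the $y$ variable. Finally, $\partial_{\xi}^{k}=c^{k/2}\partial_{y}^{k}$, so the factor $c^{(k+1)/2}$ is harmless for fixed $c$, and \eqref{eq:PhiConv} follows. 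The restriction $R<c^{-1/2}$ only serves to stay inside the domain; the estimate even holds on all of $[0,c^{-1/2}]$.

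There is no serious obstacle, because $c$ is fixed. The only care needed is the positivity of $\hat w^{L}$ uniformly in $L$, which the maximum principle \eqref{eq:wbounds} provides directly.
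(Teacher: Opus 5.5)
Your proposal is correct and follows essentially the same route as the paper: the exact identity $v^{L}(\xi,cL^{2})=\sqrt c\,\hat w^{L}_{y}/\hat w^{L}$ evaluated at $(\xi\sqrt c,c)$, Lemma~\ref{lem:hatw} at $\tau=c$, and the Leibniz/Fa\`a di Bruno expansion of the same quotient decomposition. The only difference is that your split into $L\ge L_{1}(c)$ and bounded $L$ is unnecessary, because the maximum-principle lower bound $\hat w^{L}\ge e^{-\|u_{0}\|_{L^{1}}}$ is already uniform in $L$, and the paper uses that bound directly.
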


\begin{proof}
Since $u^{L}=w^{L}_{x}/w^{L}$ and $\partial_{x}=L^{-1}\partial_{y}$ under \eqref{eq:hatw},
at $t=cL^{2}$ and $x=\xi\sqrt t$, so that $y=x/L=\xi\sqrt c$, and using $\sqrt t/L=\sqrt c$,
\begin{equation}\label{eq:vLhat}
v^{L}(\xi,cL^{2})
=\sqrt t\,\frac{L^{-1}\hat w^{L}_{y}(\xi\sqrt c,c)}{\hat w^{L}(\xi\sqrt c,c)}
=\sqrt c\,\frac{\hat w^{L}_{y}(\xi\sqrt c,c)}{\hat w^{L}(\xi\sqrt c,c)} .
\end{equation}
Comparing with \eqref{eq:Phic}, the claim reduces to estimating
$\partial_{y}^{j}\big(\hat w^{L}_{y}/\hat w^{L}-\Psi_{y}/\Psi\big)$ at $\tau=c$ on
$[0,R\sqrt c]\subset[0,1]$.

Put $\eta=\hat w^{L}-\Psi$. The maximum principle applied to \eqref{eq:hatwpb}, together
with \eqref{eq:w0bounds}, gives
\begin{equation}\label{eq:hatwlower}
m_{0}:=e^{-\|u_{0}\|_{L^{1}}}\le\hat w^{L}\le e^{\|u_{0}\|_{L^{1}}} .
\end{equation}
From \eqref{eq:phiseries}, $|\partial_{y}^{i}\Psi(\cdot,c)|\le K_{i}(c)$ on $[0,1]$, while
Lemma \ref{lem:hatw} gives $|\partial_{y}^{i}\eta(\cdot,c)|\le C_{i}(c)/L$ and hence
$|\partial_{y}^{i}\hat w^{L}(\cdot,c)|\le K_{i}(c)+C_{i}(c)/L$. Since $\hat w^{L}\ge m_{0}>0$
and $\Psi\ge\min(1,e^{M})>0$, Fa\`a di Bruno yields
\begin{equation}\label{eq:invbounds}
\Big|\partial_{y}^{b}\Big(\frac{1}{\hat w^{L}}\Big)\Big|\le K'_{b}(c),
\qquad
\Big|\partial_{y}^{b}\Big(\frac{1}{\Psi}\Big)\Big|\le K'_{b}(c),
\qquad b\le k+1,\ L\ge R_{0}.
\end{equation}
Writing
\[
\frac{\hat w^{L}_{y}}{\hat w^{L}}-\frac{\Psi_{y}}{\Psi}
=\frac{\eta_{y}}{\hat w^{L}}-\frac{\Psi_{y}}{\hat w^{L}\Psi}\,\eta
\]
and differentiating $j\le k$ times, the Leibniz rule produces finitely many terms, each
carrying exactly one factor $\partial_{y}^{a}\eta$ with $a\le j+1$, the remaining factors
being derivatives of $\Psi$, $1/\hat w^{L}$ and $1/\Psi$ of order at most $j+1$. By
\eqref{eq:invbounds} and Lemma \ref{lem:hatw},
\[
\Big|\partial_{y}^{j}\Big(\frac{\hat w^{L}_{y}}{\hat w^{L}}-\frac{\Psi_{y}}{\Psi}\Big)(y,c)\Big|
\le\frac{C_{j}(c)}{L},
\qquad y\in[0,1].
\]
Since $\partial_{\xi}^{j}=c^{j/2}\partial_{y}^{j}$ along $y=\xi\sqrt c$, \eqref{eq:vLhat}
gives \eqref{eq:PhiConv}.
\end{proof}

\begin{prop}[$c\to\infty$: the stationary profile]\label{prop:cinfty}
Uniformly for $y\in[0,1]$, and at rate $e^{-\pi^{2}c}$,
\begin{equation}\label{eq:Phicinfty}
\frac{1}{\sqrt c}\,\Phi_{c}\Big(\frac{y}{\sqrt c}\Big)
\longrightarrow
\frac{e^{M}-1}{1+(e^{M}-1)y}=\mathcal U_{M}(y)
\qquad\text{as }c\to\infty .
\end{equation}
For comparison, for fixed $L$ and $t\to\infty$,
\begin{equation}\label{eq:UMlimit}
u^{L}(x,t)\longrightarrow \frac{e^{M}-1}{L+(e^{M}-1)x}=U_{M}^{L}(x)
\qquad\text{uniformly on }[0,L].
\end{equation}
\end{prop}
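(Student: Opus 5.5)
The plan is to compute the left-hand side of \eqref{eq:Phicinfty} directly from Definition~\ref{def:Phic}. At $\xi=y/\sqrt c$ one has $\xi\sqrt c=y$, so
\[
\frac{1}{\sqrt c}\,\Phi_c\Big(\frac{y}{\sqrt c}\Big)=\frac{\Psi_y(y,c)}{\Psi(y,c)}
=\frac{(e^M-1)\varphi_y(y,c)}{1+(e^M-1)\varphi(y,c)},\qquad y\in[0,1].
\]
The $\sqrt c$ factors cancel exactly, so the statement reduces to the convergence, as $\tau=c\to\infty$, of $\varphi(\cdot,c)\to y$ and $\varphi_y(\cdot,c)\to1$ uniformly on $[0,1]$. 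Substituting these limits gives $(e^M-1)/(1+(e^M-1)y)=\mathcal U_M(y)$.

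For the rate, I will use the series \eqref{eq:phiseries}. For $c\ge1$,
\[
|\varphi(y,c)-y|\le\frac{2}{\pi}\sum_{n\ge1}\frac{e^{-n^2\pi^2c}}{n},\qquad
|\varphi_y(y,c)-1|\le 2\sum_{n\ge1}e^{-n^2\pi^2c}.
\]
Since $n^2\ge n$, each sum is bounded by $e^{-\pi^2c}\sum_{n\ge1}e^{-(n^2-1)\pi^2}\le C e^{-\pi^2c}$, with $C$ independent of $c\ge1$.

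Next I write the difference of quotients as
\[
\frac{\Psi_y}{\Psi}-\frac{W_y}{W}=\frac{\Psi_y-W_y}{\Psi}-\frac{W_y}{\Psi W}(\Psi-W),\qquad W(y)=1+(e^M-1)y,
\]
exactly as in the proof of Theorem~\ref{expt1}. Both $\Psi$ and $W$ are bounded below by $\min(1,e^M)$; for $\Psi$ this is \eqref{eq:Psibounds}, and for $W$ it holds because $y\in[0,1]$. Also $|W_y|=|e^M-1|$. Together with the series bounds above, this gives the uniform bound $C_M e^{-\pi^2c}$. The only point requiring any care is this uniform lower bound on the denominator, and it is already supplied by the maximum principle.

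For \eqref{eq:UMlimit}, I take $p=\infty$ in Theorem~\ref{expt1}. Since $(1+t^{-3/4})e^{-\pi^2t/L^2}\to0$ as $t\to\infty$ with $L$ fixed, this gives uniform convergence of $u^L(\cdot,t)$ to $U_M^L$ on $[0,L]$, and the explicit formula is \eqref{UM}.
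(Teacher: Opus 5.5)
Your proposal is correct and follows the paper's proof. The paper also reduces to $c^{-1/2}\Phi_c(y/\sqrt c)=\Psi_y(y,c)/\Psi(y,c)$, bounds the series \eqref{eq:phiseries} by $Ce^{-\pi^2 c}$ for $c\ge1$, and uses the lower bound \eqref{eq:Psibounds} on the denominator. One small correction: the domination uses $n^2-1\ge0$ together with $c\ge1$, not $n^2\ge n$. For \eqref{eq:UMlimit} you cite Theorem~\ref{expt1} with $p=\infty$. The paper instead reruns that theorem's argument in domain variables ($C^1$ decay of $\hat w^L-W$, then $u=w_x/w$ with $w\ge m_0$), so this is only a repackaging of the same mechanism.
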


\begin{proof}
For $\tau\ge1$ the series in \eqref{eq:phiseries} are dominated by $Ce^{-\pi^{2}\tau}$, so
$\varphi(\cdot,\tau)\to y$ and $\varphi_{y}(\cdot,\tau)\to1$ uniformly on $[0,1]$ at that
rate, whence $\Psi(y,\tau)\to1+(e^{M}-1)y$ and $\Psi_{y}(y,\tau)\to e^{M}-1$. By
\eqref{eq:Phic}, $c^{-1/2}\Phi_{c}(y/\sqrt c)=\Psi_{y}(y,c)/\Psi(y,c)$, and the denominator
is bounded below by \eqref{eq:Psibounds}; this is \eqref{eq:Phicinfty}.

For \eqref{eq:UMlimit}, fix $L$ and let $\tau=t/L^{2}\to\infty$. The difference
$\hat w^{L}-[1+(e^M-1)y]$ solves the homogeneous Dirichlet heat equation on $(0,1)$ with
bounded initial datum; parabolic smoothing on $\tau\ge1$ together with the spectral gap
gives $C^{1}([0,1])$ decay at the rate $e^{-\pi^{2}\tau}$. Thus
$\hat w^{L}(\cdot,\tau)\to1+(e^{M}-1)\,\cdot$ in $C^{1}([0,1])$, that is
$w^{L}(x,t)\to1+(e^{M}-1)x/L$ together with $w^{L}_{x}(x,t)\to(e^{M}-1)/L$, uniformly on
$[0,L]$. Since $w^{L}\ge m_{0}>0$ by \eqref{eq:hatwlower},
\[
u^{L}=\frac{w^{L}_{x}}{w^{L}}\longrightarrow
\frac{(e^{M}-1)/L}{1+(e^{M}-1)x/L}=\frac{e^{M}-1}{L+(e^{M}-1)x}=U_{M}^{L}(x),
\]
the stationary profile \eqref{UM}.
\end{proof}

\begin{remark}\label{rem:scaling}
The rescaling in \eqref{eq:Phicinfty} is essential: $\Phi_{c}$ is a profile in the
diffusive variable $\xi=x/\sqrt t$, whereas the stationary state lives on the scale of the
domain, $y=x/L=\xi\sqrt c$. A fixed positive $\xi$ eventually lies outside the domain of
$\Phi_c$ as $c\to\infty$. The meaningful statement is
$c^{-1/2}\Phi_c(y/\sqrt c)\to\mathcal U_M(y)$ for fixed $y\in[0,1]$; at $y=0$ this also gives
$\Phi_c(0)\sim\sqrt c\,(e^M-1)$.
\end{remark}

 By uniqueness of the steady state in each mass class, $U_M^L$ is the only equilibrium
a mass-$M$ trajectory can approach. The family $\{\Phi_c\}_{c>0}$ therefore links $f_M$
and $\mathcal U_M$ through limits in their respective natural coordinates; its existence
identifies $t\sim L^2$ as the transition scale.

\subsection{The sharp rate}\label{ss:sharp}

\begin{lemma}\label{lem:poisson}
For every $\tau>0$ and $y\in\mathbb R$,
\begin{equation}\label{eq:poisson}
1+2\sum_{n\ge1}\cos(n\pi y)\,e^{-n^{2}\pi^{2}\tau}
=\frac{1}{\sqrt{\pi\tau}}\sum_{k\in\mathbb Z}
\exp\Big(-\frac{(y-2k)^{2}}{4\tau}\Big),
\end{equation}
both sides converging absolutely. In particular \eqref{eq:poisson} represents
$\varphi_{y}(y,\tau)$ for $y\in[0,1]$.
\end{lemma}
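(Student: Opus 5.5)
This is the Poisson summation formula applied to a Gaussian. We view the right-hand side as the $2$-periodization of a Gaussian and compute its Fourier coefficients. Set
\[
g(y):=\frac{1}{\sqrt{\pi\tau}}\exp\Big(-\frac{y^{2}}{4\tau}\Big),
\qquad
P(y):=\sum_{k\in\mathbb Z}g(y-2k).
\]

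\emph{Convergence.} The series defining $P$ converges absolutely and locally uniformly in $y$, together with all its derivatives. Indeed, for $|y|\le A$ and $|k|\ge A$, each term is bounded by $Ce^{-k^{2}/(4\tau)}$. Hence $P$ is smooth and $2$-periodic. The left-hand side converges absolutely because $\sum_{n\ge1} e^{-n^{2}\pi^{2}\tau}<\infty$ for $\tau>0$. Its derivatives converge as well, since $n^{m}e^{-n^{2}\pi^{2}\tau}$ is summable for every $m$.

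\emph{Fourier coefficients.} The $n$-th coefficient of $P$ on a period is
\[
\widehat P_{n}=\frac12\int_{-1}^{1}P(y)e^{-in\pi y}\,dy .
\]
Interchanging sum and integral, which is allowed by absolute convergence, and unfolding the periodization gives
\[
\widehat P_{n}=\frac12\int_{\mathbb R}g(y)e^{-in\pi y}\,dy .
\]
The Gaussian integral $\int_{\mathbb R}e^{-y^{2}/(4\tau)}e^{-i\xi y}\,dy=\sqrt{4\pi\tau}\,e^{-\tau\xi^{2}}$ then yields
\[
\widehat P_{n}=\frac12\cdot\frac{1}{\sqrt{\pi\tau}}\cdot\sqrt{4\pi\tau}\,e^{-n^{2}\pi^{2}\tau}=e^{-n^{2}\pi^{2}\tau}.
\]
Since $P$ is smooth, its Fourier series converges to it pointwise (indeed uniformly). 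Because the coefficients are real and even in $n$, pairing $\pm n$ gives
\[
P(y)=1+2\sum_{n\ge1}e^{-n^{2}\pi^{2}\tau}\cos(n\pi y),
\]
which is \eqref{eq:poisson}.

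\emph{Alternative route.} One could avoid Fourier analysis. Both sides, viewed as functions of $(y,\tau)$, solve the heat equation on $\mathbb R\times(0,\infty)$, are $2$-periodic and even in $y$, and tend to the periodic Dirac comb $2\sum_{k}\delta_{2k}$ as $\tau\to0$. Uniqueness for the periodic heat equation then identifies them. The direct computation above is shorter, however.

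\emph{Identification with $\varphi_{y}$.} The final claim follows by comparing the left-hand side of \eqref{eq:poisson} with the expression for $\varphi_{y}$ in \eqref{eq:phiseries}. That series converges in $C^{\infty}([0,1])$, so the two agree on $[0,1]$.

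There is no real obstacle. The only points needing care are justifying the interchange of summation and integration, and the pointwise convergence of the Fourier series. Both follow from the Gaussian decay and the smoothness of $P$.
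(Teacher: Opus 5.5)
Your proposal is correct and is essentially the paper's argument. The paper applies Poisson summation over $2\mathbb Z$ to the Gaussian $g$ with $\hat g(\lambda)=e^{-\lambda^{2}\tau}$, while you compute the Fourier coefficients $\widehat P_n=e^{-n^{2}\pi^{2}\tau}$ of the $2$-periodization directly, which is just the standard proof of Poisson summation specialized to this case; your identification with $\varphi_y$ via \eqref{eq:phiseries} matches the paper as well.
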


\begin{proof}
Let $g(s)=(4\pi\tau)^{-1/2}e^{-s^{2}/(4\tau)}$, so that
$\hat g(\lambda)=\int_{\mathbb R}g(s)e^{-i\lambda s}\,ds=e^{-\lambda^{2}\tau}$. Both $g$ and
$\hat g$ are Schwartz, so Poisson summation over the lattice $2\mathbb Z$ applies:
\[
\sum_{k\in\mathbb Z}g(y-2k)=\frac12\sum_{n\in\mathbb Z}\hat g(n\pi)\,e^{in\pi y}
=\frac12\sum_{n\in\mathbb Z}e^{-n^{2}\pi^{2}\tau}e^{in\pi y}.
\]
Multiplying by $2$ and pairing $n$ with $-n$ gives \eqref{eq:poisson}.
\end{proof}

The right-hand side of \eqref{eq:poisson} is the method-of-images representation generated
by the reflections of the source across the boundaries $y\in2\mathbb Z$. Its $k=0$ term
produces the half-line profile $f_{M}$; the term $k=1$, namely
$(\pi\tau)^{-1/2}e^{-(y-2)^{2}/(4\tau)}$, is the image charge at $y=2$, that is at $x=2L$,
and is the leading correction. In the variables of the problem its exponent is
$-(2L-x)^{2}/4t$: the Gaussian cost of the path from the origin out to the boundary at
$x=L$ and back to $x$. This is the mechanism by which the boundary is felt, and it fixes
the exponent.

The estimates below are needed for every order of differentiation, so we first record
the elementary bound on the derivatives of a Gaussian that converts each $y$-derivative
of an image term into a power of $c$.

\begin{lemma}\label{lem:gaussder}
Let $m\in\mathbb N_{0}$ and $\Lambda>0$. There is $C_{m}=C_{m}(\Lambda)$ such that, for
every $0<c\le1$, every $a\in\mathbb R$ and every $y$ with $|y-a|\le\Lambda$,
\begin{equation}\label{eq:gaussder}
\Big|\partial_{y}^{m}e^{-(y-a)^{2}/(4c)}\Big|
\;\le\;C_{m}\,c^{-m}\,e^{-(y-a)^{2}/(4c)} .
\end{equation}
\end{lemma}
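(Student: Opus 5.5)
The plan is to write the $m$-th derivative explicitly in terms of Hermite polynomials and then bound the polynomial prefactor on the bounded range $|y-a|\le\Lambda$. Set $s=y-a$ and $\sigma=(4c)^{-1/2}$, so that $e^{-(y-a)^2/(4c)}=e^{-(\sigma s)^2}$. By the Rodrigues formula for the physicists' Hermite polynomials,
\[
\partial_y^m e^{-(\sigma s)^2}=(-1)^m\sigma^m H_m(\sigma s)\,e^{-(\sigma s)^2}.
\]
(Alternatively, one can argue by induction: each derivative is $P_m(s,c)\,e^{-s^2/(4c)}$ with $P_{m+1}=\partial_sP_m-\tfrac{s}{2c}P_m$.) The whole proof therefore reduces to bounding $\sigma^m|H_m(\sigma s)|$ by $C_m c^{-m}$ when $|s|\le\Lambda$ and $0<c\le1$.

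The next step is a term-by-term count. $H_m$ is a polynomial of degree $m$ whose nonzero monomials $(\sigma s)^{m-2l}$, $0\le l\le\lfloor m/2\rfloor$, have parity $m$. A typical term therefore contributes
\[
\sigma^m\,|\sigma s|^{m-2l}\le \Lambda^{m-2l}\,\sigma^{2m-2l}=\Lambda^{m-2l}(4c)^{-(m-l)}.
\]
Because $c\le1$, each factor $c^{-(m-l)}$ is at most $c^{-m}$. Summing the finitely many coefficients of $H_m$, weighted by powers of $\Lambda$, then produces a constant $C_m(\Lambda)$, and \eqref{eq:gaussder} follows with the Gaussian factor kept intact. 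The bound is uniform in $a$ because only $s=y-a$ enters.

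There is no real obstacle here. The one thing to watch is that both hypotheses are needed. The restriction $c\le1$ is what allows every lower power $c^{-(m-l)}$ to be absorbed into $c^{-m}$. The restriction $|y-a|\le\Lambda$ is what allows the powers of $s$ to be absorbed; without it one could not keep the full Gaussian $e^{-(y-a)^2/(4c)}$ and would have to give up part of the exponent.
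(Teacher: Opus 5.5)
Your proof is correct and follows essentially the same route as the paper: rescale the variable to $(y-a)/(2\sqrt c)$, write the derivative via Rodrigues' formula as $(2\sqrt c)^{-m}$ times $H_m$ times the Gaussian, then bound the polynomial prefactor using $|y-a|\le\Lambda$ and $c\le1$. The only cosmetic difference is that you bound $H_m$ monomial by monomial, whereas the paper uses the single estimate $|H_m(s)|\le C_m(1+|s|)^m$; both give $C_m(\Lambda)\,c^{-m}$.
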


\begin{proof}
Write $s=(y-a)/(2\sqrt c)$, so that $\partial_{y}=(2\sqrt c)^{-1}\partial_{s}$ and
$e^{-(y-a)^{2}/(4c)}=e^{-s^{2}}$. By the definition of the Hermite polynomials,
$\partial_{s}^{m}e^{-s^{2}}=(-1)^{m}H_{m}(s)e^{-s^{2}}$, whence
\[
\partial_{y}^{m}e^{-(y-a)^{2}/(4c)}
=(-1)^{m}(2\sqrt c)^{-m}H_{m}(s)\,e^{-s^{2}} .
\]
Since $H_{m}$ is a polynomial of degree $m$, $|H_{m}(s)|\le C_{m}(1+|s|)^{m}$. The
hypothesis $|y-a|\le\Lambda$ gives $|s|\le\Lambda/(2\sqrt c)$, so
$(1+|s|)^{m}\le C_{m}(1+\Lambda)^{m}c^{-m/2}$ for $c\le1$. Combining the two factors
yields $(2\sqrt c)^{-m}(1+|s|)^{m}\le C_{m}c^{-m}$, which is \eqref{eq:gaussder}.
\end{proof}

\begin{teo}\label{thm:sharp}
Assume \eqref{eq:hyp} with $M\ne0$ and fix $R>0$. There exist
$c_{1}=c_{1}(R,M)>0$ and $\kappa_{-}=\kappa_{-}(M,R)>0$ such that, for $0<c\le c_{1}$:
\begin{enumerate}
\item[(i)] for every $k\in\mathbb N_{0}$ there exists
$\kappa_{+,k}=\kappa_{+,k}(M,R)>0$ such that
\begin{equation}\label{eq:sharpupper}
\big\|\Phi_{c}-f_{M}\big\|_{C^{k}([0,R])}
\le
\kappa_{+,k}\,c^{-k/2}\exp\Big(-\frac1c+\frac{R}{\sqrt c}\Big),
\end{equation}
and hence, for every $\varepsilon>0$, with $C_{k,\varepsilon}$ independent of $c$,
\begin{equation}\label{eq:sharpupper2}
\big\|\Phi_{c}-f_{M}\big\|_{C^{k}([0,R])}\le C_{k,\varepsilon}\,e^{-(1-\varepsilon)/c};
\end{equation}
\item[(ii)] at $\xi=0$ the difference has the exact asymptotics
\begin{equation}\label{eq:sharpasymp}
\big|\Phi_{c}(0)-f_{M}(0)\big|
=\frac{2|e^{M}-1|}{\sqrt\pi}\,e^{-1/c}\bigl(1+O(e^{-3/c})\bigr),
\qquad c\to0^{+};
\end{equation}
in particular
\begin{equation}\label{eq:sharplower}
\big|\Phi_{c}(0)-f_{M}(0)\big|\ \ge\ \kappa_{-}\,e^{-1/c},
\end{equation}
and
\begin{equation}\label{eq:gammaone}
\lim_{c\to0^{+}}\ -c\,\ln\big|\Phi_{c}(0)-f_{M}(0)\big|=1 .
\end{equation}
\end{enumerate}
The exponent $\gamma=1$ in \eqref{eq:sharpupper2} is therefore attained, and no larger
exponent is admissible.
\end{teo}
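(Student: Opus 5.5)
The plan is to put $\Phi_c$ and $f_M$ in the same algebraic form, using the image representation of Lemma~\ref{lem:poisson}, and then read off the difference. By \eqref{eq:phiseries} and Lemma~\ref{lem:poisson}, at $y=\xi\sqrt c$ and $\tau=c$,
\[
\sqrt c\,\varphi_y(\xi\sqrt c,c)=\frac{1}{\sqrt\pi}\sum_{k\in\mathbb Z}e^{-(\xi\sqrt c-2k)^2/(4c)}=b(\xi)+E_c(\xi),
\qquad
E_c(\xi):=\frac1{\sqrt\pi}\sum_{k\ne0}e^{-(\xi\sqrt c-2k)^2/(4c)} ,
\]
since the $k=0$ term is exactly $\pi^{-1/2}e^{-\xi^2/4}=b(\xi)$. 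Because $\varphi(0,c)=0$, integrating in $\xi$ gives $\varphi(\xi\sqrt c,c)=B(\xi)+I_c(\xi)$, where $B=\int_0^\xi b$ and $I_c=\int_0^\xi E_c$. Write $g=1+(e^M-1)B$, which is the Hopf--Cole function of $f_M$ from Proposition~\ref{TI2}. Then \eqref{eq:Phic} becomes
\[
\Phi_c-f_M=(e^M-1)\,\frac{E_c\,g-(e^M-1)\,b\,I_c}{g\,\bigl(g+(e^M-1)I_c\bigr)} .
\]
Both factors in the denominator are bounded below by $\min(1,e^M)$: the first as in Proposition~\ref{TI2}, the second because $g+(e^M-1)I_c=\Psi(\xi\sqrt c,c)$ and \eqref{eq:Psibounds} applies. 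The whole problem is thus reduced to bounding $E_c$, $I_c$ and their $\xi$-derivatives on $[0,R]$.

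\textbf{Upper bound (i).} Choose $c_1\le1$ with $R\sqrt{c_1}\le1$, so that $y=\xi\sqrt c\in[0,1]$. I would bound the image terms in three groups.
\begin{itemize}
\item The $k=1$ term satisfies $(2-y)^2\ge 4-4y$, hence $e^{-(2-y)^2/(4c)}\le e^{-1/c+y/c}\le e^{-1/c+R/\sqrt c}$. This is the dominant term and the source of the $R/\sqrt c$ loss.
\item The $k=-1$ term is at most $e^{-1/c}$.
\item For $|k|\ge2$ one has $(2|k|-1)^2/4\ge 1+2(|k|-1)$. The tail is therefore bounded by a geometric series times $e^{-1/c}$, uniformly for $c\le1$.
\end{itemize}
For derivatives I use $\partial_\xi^m=c^{m/2}\partial_y^m$ along $y=\xi\sqrt c$. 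Lemma~\ref{lem:gaussder} gives a factor $c^{-m}$ for each image term, after one checks that the lemma's proof yields the needed polynomial growth uniformly in $k$ (here $|y-a|\le 2|k|+1$ grows with $|k|$, but the polynomial factor is absorbed by the Gaussian decay in $k$). The net factor is $c^{-m/2}$, so
\[
\bigl|\partial_\xi^mE_c\bigr|\le C_m\,c^{-m/2}e^{-1/c+R/\sqrt c}\quad\text{on }[0,R] .
\]
For $I_c$ with $m=0$, integrating over $[0,R]$ gives the same bound with an extra factor $R$. Applying Leibniz and Fa\`a di Bruno to the quotient above, as in the proof of Theorem~\ref{thm:Phi}, every term carries exactly one factor $\partial^aE_c$ or $\partial^aI_c$ with $a\le k$. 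The other factors are derivatives of $b$, $g$ and of $1/\Psi$, which are bounded uniformly in $c$ because they are built from the same sums. This gives \eqref{eq:sharpupper}. Then \eqref{eq:sharpupper2} follows because $c^{-k/2}e^{R/\sqrt c}\le C_{k,\varepsilon}e^{\varepsilon/c}$.

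\textbf{Exact asymptotics (ii).} At $\xi=0$ we have $I_c(0)=0$ and $g(0)=1$, so the difference collapses to
\[
\Phi_c(0)-f_M(0)=(e^M-1)E_c(0)=\frac{2(e^M-1)}{\sqrt\pi}\sum_{k\ge1}e^{-k^2/c}=\frac{2(e^M-1)}{\sqrt\pi}e^{-1/c}\bigl(1+O(e^{-3/c})\bigr).
\]
This identity is exact, and it gives \eqref{eq:sharpasymp}, and with it \eqref{eq:sharplower} for a suitable $\kappa_-$ and \eqref{eq:gammaone} upon taking logarithms, since $M\ne0$. Optimality then follows: a bound of the form $Ce^{-\gamma/c}$ with $\gamma>1$ would contradict \eqref{eq:sharplower} as $c\to0$.

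The main obstacle is bookkeeping rather than ideas. One must control the full infinite image sum together with all its derivatives uniformly in $c$, including the polynomial growth in $k$ coming from Lemma~\ref{lem:gaussder}, and verify that each differentiated quotient term contains exactly one small factor. I would handle the tail $|k|\ge2$ by a crude domination, namely a polynomial in $k$ times $e^{-(2|k|-1)^2/(4c)}$, and sum it against $e^{-1/c}$.
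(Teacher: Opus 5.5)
Your proposal is correct and follows the paper's proof essentially step for step: the image splitting via Lemma~\ref{lem:poisson}, Lemma~\ref{lem:gaussder} for the derivatives, the Leibniz/Fa\`a di Bruno expansion of the quotient, and the exact evaluation at $\xi=0$. Your identification of the denominator as $g+(e^M-1)I_c=\Psi(\xi\sqrt c,c)\ge\min(1,e^M)$ is slightly cleaner than the paper's extra shrinking of $c_1$. There is one small arithmetic slip: $(2|k|-1)^2/4\ge 1+2(|k|-1)$ fails at $|k|=2$ (since $9/4<3$), but $(2|k|-1)^2/4\ge|k|$ does hold for $|k|\ge2$ and gives the same geometric domination by $e^{-1/c}$.
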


\begin{proof}
Fix $c_{1}\le\min \{1,1/(4R^{2})\}$, so that $y:=\xi\sqrt c\in[0,\tfrac12]$ for $\xi\in[0,R]$.

\emph{(i)} By Lemma \ref{lem:poisson},
\begin{equation}\label{eq:split1}
\varphi_{y}(y,c)=\frac{1}{\sqrt{\pi c}}e^{-y^{2}/(4c)}+E_{1}(c,y),
\qquad
E_{1}(c,y)=\frac{1}{\sqrt{\pi c}}\sum_{\ell\ne0}e^{-(y-2\ell)^{2}/(4c)} .
\end{equation}
For $0\le y\le\tfrac12$ and $\ell\ne0$ we have $|y-2\ell|\ge2|\ell|-y$, so
$(y-2\ell)^{2}\ge(2|\ell|-y)^{2}$, and the smallest of these values is $(2-y)^{2}$, attained at
$\ell=1$. Moreover, for $|\ell|\ge1$ and $y\le\tfrac12$,
\[
(2|\ell|-y)^{2}-(2-y)^{2}=4(|\ell|-1)\big(|\ell|+1-y\big)\ge 2(|\ell|-1),
\]
so the terms of $E_{1}$ are dominated by $e^{-(2-y)^{2}/(4c)}$ times a summable geometric
series of ratio $e^{-1/(2c)}$. Hence
\begin{equation}\label{eq:E1}
0<E_{1}(c,y)\le\frac{C}{\sqrt c}\,e^{-(2-y)^{2}/(4c)}
=\frac{C}{\sqrt c}\exp\Big(-\frac1c+\frac{\xi}{\sqrt c}-\frac{\xi^{2}}{4}\Big),
\end{equation}
the last equality expanding $(2-y)^{2}/(4c)=1/c-y/c+y^{2}/(4c)$ and substituting
$y=\xi\sqrt c$. Integrating \eqref{eq:split1} from $0$, using $\varphi(0,c)=0$ and
$\int_{0}^{y}(\pi c)^{-1/2}e^{-s^{2}/(4c)}\,ds=\operatorname{erf}\big(y/(2\sqrt c)\big)$,
\begin{equation}\label{eq:split0}
\varphi(y,c)=\operatorname{erf}\Big(\frac{y}{2\sqrt c}\Big)+E_{0}(c,y),
\qquad
E_{0}(c,y)=\int_{0}^{y}E_{1}(c,s)\,ds .
\end{equation}
By \eqref{eq:E1} the integrand is positive and, on $[0,\tfrac12]$, bounded by its value at
$s=y$, since $s\mapsto e^{-(2-s)^{2}/(4c)}$ is increasing there; therefore
\begin{equation}\label{eq:E0}
0<E_{0}(c,y)\le y\cdot\frac{C}{\sqrt c}e^{-(2-y)^{2}/(4c)}
\le C\exp\Big(-\frac1c+\frac{\xi}{\sqrt c}-\frac{\xi^{2}}{4}\Big),
\end{equation}
absorbing $y/\sqrt c=\xi\le R$ into the constant.

Substituting $y=\xi\sqrt c$ into \eqref{eq:split1} and \eqref{eq:split0}, and using
$\operatorname{erf}(\xi/2)=\int_{0}^{\xi}b$,
\[
\sqrt c\,\varphi_{y}(\xi\sqrt c,c)=\frac{1}{\sqrt\pi}e^{-\xi^{2}/4}+\sqrt c\,E_{1},
\qquad
\varphi(\xi\sqrt c,c)=\int_{0}^{\xi}b+E_{0}.
\]
Shrinking $c_{1}$ so that $|e^{M}-1|\sup_{[0,R]}|E_{0}|\le\tfrac12\min(1,e^{M})$, both
denominators in $\Phi_{c}(\xi)$ and $f_{M}(\xi)$ are bounded below by
$\tfrac12\min(1,e^{M})>0$, and the quotient rule gives
\[
|\Phi_{c}(\xi)-f_{M}(\xi)|\le C\big(\sqrt c\,|E_{1}|+|E_{0}|\big)
\le\kappa_{+}\exp\Big(-\frac1c+\frac{R}{\sqrt c}\Big),
\]
which is \eqref{eq:sharpupper} for $k=0$.

For $k\ge1$ we first estimate the derivatives of the two remainders. Fix $m\in\mathbb
N_{0}$. Differentiating \eqref{eq:split1} termwise and applying Lemma \ref{lem:gaussder}
with $a=2\ell$ and $\Lambda=2|\ell|+\tfrac12$, which is legitimate since $0\le y\le\tfrac12$,
\[
\big|\partial_{y}^{m}E_{1}(c,y)\big|
\le\frac{C_{m}}{\sqrt{\pi c}}\,c^{-m}\sum_{\ell\ne0}(1+2|\ell|)^{m}\,e^{-(y-2\ell)^{2}/(4c)} .
\]
By the two inequalities established above, $(y-2\ell)^{2}\ge(2-y)^{2}+2(|\ell|-1)$ for
$\ell\ne0$ and $y\le\tfrac12$, so the sum is bounded by
$e^{-(2-y)^{2}/(4c)}\sum_{\ell\ne0}(1+2|\ell|)^{m}e^{-(|\ell|-1)/(2c)}$, and the last series
converges to a constant depending only on $m$, uniformly for $c\le c_{1}$. Hence
\begin{equation}\label{eq:E1der}
\big|\partial_{y}^{m}E_{1}(c,y)\big|\le C_{m}\,c^{-m-1/2}\,e^{-(2-y)^{2}/(4c)},
\qquad 0\le y\le\tfrac12 .
\end{equation}
Since $E_{0}(c,\cdot)$ is the primitive of $E_{1}(c,\cdot)$ vanishing at $y=0$, one has
$\partial_{y}^{m}E_{0}=\partial_{y}^{m-1}E_{1}$ for $m\ge1$, so \eqref{eq:E1der} and
\eqref{eq:E0} give
\begin{equation}\label{eq:E0der}
\big|\partial_{y}^{m}E_{0}(c,y)\big|\le C_{m}\,c^{-m+1/2}\,e^{-(2-y)^{2}/(4c)},
\qquad m\ge1 .
\end{equation}
Along $y=\xi\sqrt c$ one has $\partial_{\xi}^{m}=c^{m/2}\partial_{y}^{m}$, so
\eqref{eq:E1der}, \eqref{eq:E0der} and \eqref{eq:E0} become
\begin{equation}\label{eq:Eder-xi}
\big|\partial_{\xi}^{m}\big(\sqrt c\,E_{1}\big)\big|\le C_{m}\,c^{-m/2}\,\mathcal E(c,\xi),
\qquad
\big|\partial_{\xi}^{m}E_{0}\big|\le C_{m}\,c^{-m/2}\,\mathcal E(c,\xi),
\end{equation}
for every $m\in\mathbb N_{0}$, where
$\mathcal E(c,\xi):=\exp\big(-\tfrac1c+\tfrac{\xi}{\sqrt c}-\tfrac{\xi^{2}}{4}\big)$ is
the common exponential factor of \eqref{eq:E1} and \eqref{eq:E0}.

It remains to differentiate the quotient. Write $A=e^{M}-1$, $b(\xi)=\pi^{-1/2}e^{-\xi^{2}/4}$
and $B(\xi)=\int_{0}^{\xi}b$, so that
$f_{M}=Ab/(1+AB)$ while, by the display following \eqref{eq:E0},
$\Phi_{c}=A\,(b+\sqrt c\,E_{1})/\big(1+A(B+E_{0})\big)$. Subtracting,
\begin{equation}\label{eq:quotsplit}
\Phi_{c}-f_{M}
=\frac{A\,\sqrt c\,E_{1}}{1+A(B+E_{0})}
-\frac{A^{2}\,b\,E_{0}}{(1+AB)\big(1+A(B+E_{0})\big)} .
\end{equation}
Both denominators are bounded below by $\tfrac12\min(1,e^{M})>0$, by the choice of
$c_{1}$ made above; the derivatives of $b$ and $B$ are bounded on $[0,R]$ independently
of $c$; and by \eqref{eq:Eder-xi} together with Fa\`a di Bruno the derivatives of
$1/(1+A(B+E_{0}))$ up to order $k$ are bounded independently of $c$ as well, since every
factor $\partial_{\xi}^{m}E_{0}$ carries $\mathcal E(c,\xi)\le e^{-1/c+R/\sqrt c}$, which
tends to $0$ as $c\to0^{+}$. Differentiating \eqref{eq:quotsplit} $m\le k$ times, the
Leibniz rule therefore produces finitely many terms, each carrying exactly one factor
$\partial_{\xi}^{a}(\sqrt c\,E_{1})$ or $\partial_{\xi}^{a}E_{0}$ with $a\le m$, the
remaining factors being bounded uniformly in $c$. By \eqref{eq:Eder-xi} each such term is
bounded by $C_{k}c^{-a/2}\mathcal E(c,\xi)\le C_{k}c^{-k/2}\mathcal E(c,\xi)$, and since
$\mathcal E(c,\xi)\le\exp(-1/c+R/\sqrt c)$ on $[0,R]$, summing gives
\eqref{eq:sharpupper}. Finally \eqref{eq:sharpupper2} follows from
$c^{-k/2}e^{R/\sqrt c}\le C_{k,\varepsilon}e^{\varepsilon/c}$.

\emph{(ii)} At $\xi=0$, i.e. $y=0$, we have $\varphi(0,c)=0$ and $\operatorname{erf}(0)=0$, so
$E_{0}(c,0)=0$ and the denominators of $\Phi_{c}(0)$ and $f_{M}(0)$ both equal $1$.
Consequently
\[
\Phi_{c}(0)-f_{M}(0)
=(e^{M}-1)\Big(\sqrt c\,\varphi_{y}(0,c)-\frac{1}{\sqrt\pi}\Big)
=(e^{M}-1)\,\sqrt c\,E_{1}(c,0),
\]
and, by \eqref{eq:split1},
\[
\sqrt c\,E_{1}(c,0)=\frac{2}{\sqrt\pi}\sum_{\ell\ge1}e^{-\ell^{2}/c}
=\frac{2}{\sqrt\pi}\,e^{-1/c}\Big(1+\sum_{\ell\ge2}e^{-(\ell^{2}-1)/c}\Big)
=\frac{2}{\sqrt\pi}\,e^{-1/c}\big(1+O(e^{-3/c})\big).
\]
All terms are positive, so the sum is bounded below by its first term and no cancellation
occurs. Hence, for $M\ne0$ and $c\le c_{1}$,
\[
|\Phi_{c}(0)-f_{M}(0)|=\frac{2|e^{M}-1|}{\sqrt\pi}\,e^{-1/c}\big(1+O(e^{-3/c})\big)
\ \ge\ \kappa_{-}e^{-1/c},
\]
which is \eqref{eq:sharpasymp} and \eqref{eq:sharplower}; taking $-c\ln$ gives \eqref{eq:gammaone}.
\end{proof}

\begin{cor}[$c\to0$: the self-similar profile]\label{cor:c-to-zero}
For every $R>0$ and $k\in\mathbb N_0$, $\|\Phi_c-f_M\|_{C^k([0,R])}\to0$ as $c\to0^+$, where $f_M$ is the self-similar profile of Proposition \ref{TI2}.
\end{cor}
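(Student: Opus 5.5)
The corollary follows from Theorem~\ref{thm:sharp}(i); only the case $M=0$ needs a separate, trivial argument.

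\emph{Case $M\neq0$.} Fix $R>0$ and $k\in\mathbb N_0$. By \eqref{eq:sharpupper}, for $0<c\le c_1(R,M)$ we have
\[
\|\Phi_c-f_M\|_{C^k([0,R])}\le \kappa_{+,k}\,c^{-k/2}\exp\Big(-\frac1c+\frac{R}{\sqrt c}\Big).
\]
The right-hand side tends to $0$ as $c\to0^+$. To see this, set $s=c^{-1/2}\to\infty$; the exponent is $-s^2+Rs$, which dominates the polynomial factor $s^{k}$. Equivalently, \eqref{eq:sharpupper2} with, say, $\varepsilon=\tfrac12$ gives the bound $C_{k,1/2}e^{-1/(2c)}\to0$.

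A small point deserves care. The proof of Theorem~\ref{thm:sharp}(i) never used $M\neq0$; that hypothesis enters only in the lower bound of part (ii). So the upper bound is available for every $M$, but the degenerate case can also be checked by hand.

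\emph{Case $M=0$.} Here $e^M-1=0$, so \eqref{eq:Phic} gives $\Phi_c\equiv0$, and Proposition~\ref{TI2} gives $f_0\equiv0$. The difference is therefore identically zero, for every $c$.

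Combining the two cases, $\|\Phi_c-f_M\|_{C^k([0,R])}\to0$ as $c\to0^+$. There is no real obstacle: the only points to verify are that $c^{-k/2}e^{R/\sqrt c}$ is subexponential in $1/c$, and that $[0,R]$ lies in the domain $[0,c^{-1/2}]$ of $\Phi_c$. The latter holds once $c<R^{-2}$, and this is ensured by $c_1\le 1/(4R^2)$.
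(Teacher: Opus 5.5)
Your proposal is correct and follows the paper's own proof: for $M\neq0$ you invoke the upper bound of Theorem~\ref{thm:sharp}(i), and for $M=0$ you note $\Phi_c\equiv f_M\equiv0$. Your side remarks also hold: the upper-bound argument never uses $M\neq0$, and the choice $c_1\le 1/(4R^2)$ guarantees $[0,R]\subset[0,c^{-1/2}]$.
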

\begin{proof}
If $M\ne0$, this is immediate from Theorem~\ref{thm:sharp}(i), which
gives the quantitative bound \eqref{eq:sharpupper2}.  If $M=0$, then
$e^M-1=0$, so \eqref{eq:Phic} and the formula of Proposition~\ref{TI2}
give $\Phi_c\equiv f_M\equiv0$ for every $c>0$.
\end{proof}

\begin{remark}\label{rem:fit}
The estimate \eqref{eq:sharpupper} contains the subexponential factor
$\exp(R/\sqrt c)$ (and the leading image term contains
$\exp(-1/c+R/\sqrt c-R^{2}/4)$). Thus
$\ln\|\Phi_{c}-f_{M}\|_{C^{0}([0,R])}$ is not affine in $1/c$: the subexponential factor
$e^{R/\sqrt c}$ flattens the apparent slope over any bounded window of $1/c$. A
one-parameter fit of $\ln E$ against $1/c$ therefore returns an effective slope strictly
between $0$ and $1$, depending on the window and not on the problem. The two-parameter
form $-a/c+bR/\sqrt c$ should be fitted instead, or the range restricted to
$R\sqrt c\ll1$; both are consistent with $a=1$, in accordance with \eqref{eq:gammaone}.
\end{remark}

\subsection{Uniform comparison with the half-line problem}\label{ss:uniform}

\begin{lemma}\label{lem:boundary-tail}
Assume \eqref{eq:hyp}. There is $C=C(\|u_{0}\|_{L^{1}},R_{0})>0$ such that
\begin{equation}\label{eq:tail}
\big|e^{M}-w^{\infty}(L,t)\big|\le C\exp\Big(-\frac{(L-R_{0})^{2}}{4t}\Big)
\end{equation}
for all $L>R_{0}$ and $t>0$.
\end{lemma}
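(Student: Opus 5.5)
The plan is to use the explicit Dirichlet heat-kernel representation on the half-line for $h:=w^{\infty}-e^{M}$. With $G_D(t,x,y)=(4\pi t)^{-1/2}\big(e^{-(x-y)^2/4t}-e^{-(x+y)^2/4t}\big)$ we have
\[
w^{\infty}(x,t)-e^{M}=\int_0^\infty G_D(t,x,y)\,(w_0(y)-e^{M})\,dy+(1-e^{M})\Big(1-\int_0^\infty G_D(t,x,y)\,dy\Big).
\]
This is the decomposition already used in the proof of Theorem~\ref{TI}, now with the constant $e^M$ subtracted. Indeed, $w^\infty-e^M$ satisfies the heat equation with boundary value $1-e^M$ at $x=0$. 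It is the sum of the Dirichlet evolution of $w_0-e^M$ and of the solution with zero datum and constant boundary value $1-e^M$. The latter equals $(1-e^M)\operatorname{erfc}\big(x/(2\sqrt t)\big)$, and $1-\int_0^\infty G_D\,dy=\operatorname{erfc}(x/2\sqrt t)$. We evaluate at $x=L$ and treat the two terms separately.

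For the first term, the datum $w_0-e^{M}$ vanishes outside $[0,R_0]$ by \eqref{eq:hyp}. It is bounded by $2e^{\|u_0\|_{L^1}}$ by \eqref{eq:w0bounds}. For $y\in[0,R_0]$ and $L>R_0$, both $(L-y)^2$ and $(L+y)^2$ are at least $(L-R_0)^2$, so $|G_D(t,L,y)|\le (\pi t)^{-1/2}e^{-(L-R_0)^2/4t}$. This gives a bound $C R_0\, t^{-1/2}e^{-(L-R_0)^2/4t}$.

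The factor $t^{-1/2}$ is the main obstacle, since the claimed constant is independent of $t$, and it fails for $t\to0$. To remove it, I will instead bound the first term directly by $2e^{\|u_0\|_{L^1}}\int_0^{R_0}(4\pi t)^{-1/2}e^{-(L-y)^2/4t}\,dy$. Then I substitute $s=(L-y)/(2\sqrt t)$, which turns the integral into $\pi^{-1/2}\int_{(L-R_0)/2\sqrt t}^{L/2\sqrt t}e^{-s^2}\,ds$. This is at most $\tfrac12\operatorname{erfc}\big((L-R_0)/(2\sqrt t)\big)$. The image part $e^{-(L+y)^2/4t}$ is handled identically. Finally the classical bound $\operatorname{erfc}(z)\le e^{-z^2}$ for $z\ge0$ gives $C e^{-(L-R_0)^2/4t}$ uniformly in $t>0$.

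For the boundary term, $|1-e^M|\operatorname{erfc}\big(L/(2\sqrt t)\big)\le |1-e^M|\,e^{-L^2/4t}\le |1-e^M|\,e^{-(L-R_0)^2/4t}$, with $|1-e^M|\le e^{\|u_0\|_{L^1}}+1$. Adding the two contributions yields \eqref{eq:tail} with $C$ depending only on $\|u_0\|_{L^1}$ (the dependence on $R_0$ is not even needed).
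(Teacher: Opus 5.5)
Your proof is correct and follows essentially the same route as the paper. Both split the quantity into the Dirichlet evolution of $w_0-e^{M}$ (supported in $[0,R_0]$) plus the boundary correction $(e^{M}-1)\operatorname{erfc}\big(L/(2\sqrt t)\big)$, reduce the first term by substitution to an $\operatorname{erfc}$ rather than bounding the kernel pointwise, and close with $\operatorname{erfc}(\lambda)\le e^{-\lambda^{2}}$. The only cosmetic difference is that the paper drops the image term via $0\le G_D\le(4\pi t)^{-1/2}e^{-(L-y)^{2}/(4t)}$ instead of treating it separately; your remark that the constant need not depend on $R_0$ is also right, and applies to the paper's argument as well.
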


\begin{proof}
Set $h:=w^{\infty}-1$, which solves the heat equation on $(0,\infty)$ with $h(0,t)=0$ and
$h(\cdot,0)=h_{0}:=w_{0}-1$, so that
\[
h(x,t)=\int_{0}^{\infty}G_{D}(t,x,y)h_{0}(y)\,dy,
\qquad
G_{D}(t,x,y)=\frac{1}{\sqrt{4\pi t}}\Big(e^{-\frac{(x-y)^{2}}{4t}}-e^{-\frac{(x+y)^{2}}{4t}}\Big),
\]
$G_{D}$ being the Dirichlet heat kernel of the half-line. By \eqref{eq:hyp},
$h_{0}=e^{M}-1$ on $[R_{0},\infty)$. Since $e^{M}-w^{\infty}(L,t)=(e^{M}-1)-h(L,t)$ and
$(e^{M}-1)-h_{0}(y)=e^{M}-w_{0}(y)$ vanishes for $y\ge R_{0}$,
\begin{equation}\label{eq:split}
e^{M}-w^{\infty}(L,t)
=\int_{0}^{R_{0}}G_{D}(t,L,y)\big(e^{M}-w_{0}(y)\big)\,dy
+(e^{M}-1)\Big(1-\int_{0}^{\infty}G_{D}(t,L,y)\,dy\Big).
\end{equation}
For the first term, \eqref{eq:w0bounds} gives
$|e^{M}-w_{0}(y)|\le e^{|M|}+e^{\|u_{0}\|_{L^{1}}}\le2e^{\|u_{0}\|_{L^{1}}}$, and
$0\le G_{D}(t,L,y)\le(4\pi t)^{-1/2}e^{-(L-y)^{2}/(4t)}$; with $z=L-y$,
\[
\Big|\int_{0}^{R_{0}}G_{D}(t,L,y)\big(e^{M}-w_{0}(y)\big)\,dy\Big|
\le C\int_{L-R_{0}}^{\infty}t^{-1/2}e^{-\frac{z^{2}}{4t}}\,dz .
\]
Substituting $z=2\sqrt t\,s$,
\begin{equation}\label{eq:sharptail}
\int_{a}^{\infty}t^{-1/2}e^{-\frac{z^{2}}{4t}}\,dz
=2\int_{a/(2\sqrt t)}^{\infty}e^{-s^{2}}\,ds
=\sqrt\pi\,\operatorname{erfc}\Big(\frac{a}{2\sqrt t}\Big)
\le\sqrt\pi\,e^{-\frac{a^{2}}{4t}},
\qquad a\ge0,
\end{equation}
by the elementary bound $\operatorname{erfc}(\lambda)\le e^{-\lambda^{2}}$, $\lambda\ge0$.
Taking $a=L-R_{0}$ bounds this term by $Ce^{-(L-R_{0})^{2}/(4t)}$. For the second term of
\eqref{eq:split}, $\int_{0}^{\infty}G_{D}(t,L,y)\,dy=\operatorname{erf}\big(L/(2\sqrt t)\big)$,
so it equals $|e^{M}-1|\operatorname{erfc}\big(L/(2\sqrt t)\big)\le Ce^{-L^{2}/(4t)}
\le Ce^{-(L-R_{0})^{2}/(4t)}$. Adding the two bounds gives \eqref{eq:tail}.
\end{proof}

The next lemma is the quantitative form of the image-charge mechanism identified after
Lemma \ref{lem:poisson}: it estimates $z=w^{L}-w^{\infty}$ at interior points with the
exponent $(2L-x)^{2}/4t$, rather than the exponent $L^{2}/4t$ that the maximum principle
alone would give. The gain is the Gaussian decay accumulated on the return path from
$x=L$ to $x=O(\sqrt t)$, and the underlying identity is the elementary optimization
\eqref{eq:CS} below.

\begin{lemma}\label{lem:image}
Assume \eqref{eq:hyp} and let $z := w^{L} - w^{\infty}$ on $(0,L)\times(0,\infty)$. For every
$\theta\in(0,1)$ there is $C_{\theta}=C_{\theta}(\|u_{0}\|_{L^{1}},R_{0})>0$ such that
\begin{equation}\label{eq:zbound}
|z(x,t)| \;\le\; C_{\theta}\,\exp\!\Big(-\theta\,\frac{(2L-R_{0}-x)^{2}}{4t}\Big)
\end{equation}
for all $0<t\le L^{2}$, all $L\ge 2R_{0}$ and all $0\le x\le \tfrac12 L$.
\end{lemma}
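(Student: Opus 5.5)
The plan is to represent $z=w^{L}-w^{\infty}$ on $(0,L)$ through its boundary data and then estimate the boundary contribution with the Dirichlet kernel of $(0,L)$. Both $w^{L}$ and $w^{\infty}$ solve the heat equation on $(0,L)$ with the same initial datum $w_0$ and the same value $1$ at $x=0$. Hence $z$ solves the heat equation on $(0,L)$ with
\[
z(\cdot,0)=0,\qquad z(0,t)=0,\qquad z(L,t)=g(t):=e^{M}-w^{\infty}(L,t).
\]
Lemma~\ref{lem:boundary-tail} gives $|g(s)|\le C e^{-(L-R_0)^2/(4s)}$. By Duhamel's principle for boundary data,
\[
z(x,t)=\int_0^t \partial_\nu G_L(t-s,x,L)\,g(s)\,ds,
\]
where $\partial_\nu G_L(\sigma,x,L)=-\partial_\zeta G_L(\sigma,x,\zeta)|_{\zeta=L}$ is the exit-time density (the harmonic measure kernel) of the interval. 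It is nonnegative. Its method-of-images expansion has leading term the half-line kernel
\[
\frac{L-x}{\sqrt{4\pi}\,\sigma^{3/2}}\,e^{-(L-x)^2/(4\sigma)},
\]
and the remaining images sit at distances $(2k+1)L\pm x$, so for $x\le L/2$ each is smaller by a factor of order $e^{-kL^2/\sigma}$. A cleaner route avoids the kernel altogether: since $|g(s)|\le \bar g(t)$ for $s\le t$, where $\bar g(t):=Ce^{-(L-R_0)^2/(4t)}$ is increasing in $s$, the maximum principle gives $|z(x,t)|\le \bar g(t)\,P_x(\text{exit at }L\text{ before }t)$. This reduces everything to a bound on an exit probability.

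The core computation is the following. By the reflection principle,
\[
P_x(\text{hit }L\text{ before }t)\le 2P(B_{2t}>L-x)\le 2e^{-(L-x)^2/(4t)}.
\]
This alone gives the exponent $\bigl[(L-R_0)^2+(L-x)^2\bigr]/(4t)$, which falls short of $(2L-R_0-x)^2/(4t)$ because the two legs of the path are not optimized jointly. To recover the sharp exponent I will instead use the time-resolved Duhamel formula, bounding
\[
|z|\le \int_0^t \frac{C(L-x)}{(t-s)^{3/2}}\,e^{-\frac{(L-x)^2}{4(t-s)}}\,e^{-\frac{(L-R_0)^2}{4s}}\,ds,
\]
with the image corrections in $\partial_\nu G_L$ handled as a geometric tail since $t\le L^2$ and $x\le L/2$. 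The key elementary optimization is the Cauchy--Schwarz identity
\[
\frac{a^2}{s}+\frac{b^2}{t-s}\ge\frac{(a+b)^2}{t},\qquad a=L-R_0,\ b=L-x,
\]
which produces exactly $(2L-R_0-x)^2/(4t)$; presumably this is the paper's \eqref{eq:CS}.

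The main obstacle is the polynomial prefactors. These are $(L-x)(t-s)^{-3/2}$ and the time integral, which contribute powers of $L/\sqrt t$ that are unbounded as $t\to0$. They are absorbed by sacrificing a factor $\theta<1$ of the exponent. I will write $e^{-Q}=e^{-\theta Q}e^{-(1-\theta)Q}$ with $Q\ge a^2/(4s)+b^2/(4(t-s))$. I then use $\lambda^{p}e^{-(1-\theta)\lambda^2/8}\le C_{\theta,p}$ to kill the factor $b\,(t-s)^{-3/2}$, which costs a power of $(t-s)^{-1}b^2$, and similarly the length-$t$ integral against the $a^2/s$ part. This works because $a,b\ge L/2-R_0\ge L/4$ is comparable to $L$ and $t\le L^2$. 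The result is $|z|\le C_\theta e^{-\theta(2L-R_0-x)^2/(4t)}$, with $C_\theta$ depending only on $\|u_0\|_{L^1}$ and $R_0$. The image terms of $\partial_\nu G_L$ have the same structure with larger distances and are summed the same way.
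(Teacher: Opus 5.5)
Your proposal is correct and follows essentially the paper's route: the boundary Duhamel formula with the kernel $N=-\partial_yG_{(0,L)}|_{y=L}$, an image bound $|N(r,x)|\le C(L-x)r^{-3/2}e^{-(L-x)^2/(4r)}$ for $r\le L^2$ and $x\le\tfrac12L$, Lemma~\ref{lem:boundary-tail}, the Cauchy--Schwarz optimization \eqref{eq:CS}, and a $\theta$-splitting of the exponent. The only difference is cosmetic: the paper absorbs the prefactor through the exact integral $\int_0^\infty r^{-3/2}e^{-(1-\theta)(L-x)^2/(4r)}\,dr=c_\theta/(L-x)$, which cancels the factor $L-x$, whereas you bound the integrand pointwise and use $t\le L^2$ together with $a=L-R_0\ge L/2$ and $b=L-x\ge L/2$ (these bounds hold directly under $L\ge 2R_0$, while your chain $L/2-R_0\ge L/4$ would need $L\ge 4R_0$).
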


\begin{proof}
Both $w^{L}$ and $w^{\infty}$ solve the heat equation on $(0,L)$ with the same initial datum $w_{0}$
and the same value $1$ at $x=0$; they differ only in the datum at $x=L$. Hence $z$ solves
\begin{equation}\label{eq:zsys}
\begin{cases}
z_{s}=z_{xx}, & (x,s)\in(0,L)\times(0,\infty),\\[2pt]
z(0,s)=0,\quad z(L,s)=\beta(s):=e^{M}-w^{\infty}(L,s), & s>0,\\[2pt]
z(x,0)=0, & x\in(0,L),
\end{cases}
\end{equation}
and, the initial datum being zero, the classical boundary (Duhamel) representation gives
\begin{equation}\label{eq:duhamel}
z(x,t)=\int_{0}^{t}\beta(s)\,N(t-s,x)\,ds,
\qquad
N(r,x):=-\,\partial_{y}G_{(0,L)}(r,x,y)\Big|_{y=L},
\end{equation}
$G_{(0,L)}$ being the Dirichlet heat kernel of $(0,L)$.

\medskip
\emph{Step 1 (the boundary kernel)}.
By the method of images,
\[
G_{(0,L)}(r,x,y)=\frac{1}{\sqrt{4\pi r}}\sum_{k\in\mathbb{Z}}
\Big(e^{-\frac{(x-y+2kL)^{2}}{4r}}-e^{-\frac{(x+y+2kL)^{2}}{4r}}\Big).
\]
Differentiating termwise in $y$ and evaluating at $y=L$, the arguments of the two families become
$x-L+2kL=x+(2k-1)L$ and $x+L+2kL=x+(2k+1)L$; the second family is the first one shifted by one
index, so the two merge into a single sum over the odd multiples of $L$ translated by $x$. Writing
\[
d_{m}:=x+(2m-1)L,\qquad m\in\mathbb{Z},
\]
one obtains
\begin{equation}\label{eq:Nseries}
N(r,x)=\frac{2}{\sqrt{4\pi r}}\sum_{m\in\mathbb{Z}}\Big(-\frac{d_{m}}{2r}\Big)e^{-d_{m}^{2}/(4r)}.
\end{equation}
Only an upper bound on $|N|$ is needed below, so we estimate \eqref{eq:Nseries} term by term.
Assume $0\le x\le\tfrac12 L$; then $|d_{0}|=L-x\in[\tfrac12 L,L]$.

The term $m=1$. Here $d_{1}=x+L$, so $x$ adds to $L$ rather than subtracting from it, and no
gain over $|d_{0}|$ is available: at $x=0$ one has $|d_{1}|=|d_{0}|=L$. (This is forced by
$z(0,s)=0$: the image at $m=1$ is precisely the one cancelling $m=0$ at the origin.) It suffices to
compare the two directly. Since $(L+x)^{2}\ge(L-x)^{2}$ and $L+x\le\tfrac32 L\le 3(L-x)$ for
$x\le\tfrac12 L$,
\begin{equation}\label{eq:m1}
\frac{|d_{1}|}{2r}\,e^{-d_{1}^{2}/(4r)}
=\frac{L+x}{2r}\,e^{-(L+x)^{2}/(4r)}
\le \frac{3(L-x)}{2r}\,e^{-(L-x)^{2}/(4r)},
\end{equation}
which merely triples the $m=0$ term and is absorbed into the constant of \eqref{eq:Nbound} below.

The terms $m=-1$ and $|m|\ge 2$. For these, $|d_{m}|\ge(2|m|-1)L-x\ge(2|m|-\tfrac32)L$, and
since $(L-x)^{2}\le L^{2}$,
\[
d_{m}^{2}-(L-x)^{2}\;\ge\;\Big(2|m|-\tfrac32\Big)^{2}L^{2}-L^{2}\;\ge\;\frac{L^{2}}{2}\Big(|m|-\tfrac12\Big),
\]
as is checked directly for $m=-1$ and for $|m|\ge2$. Hence these terms are dominated by
\[
\frac{|d_{m}|}{2r}\,e^{-d_{m}^{2}/(4r)}
\le\frac{|d_{m}|}{2r}\,e^{-(L-x)^{2}/(4r)}\,e^{-L^{2}(|m|-1/2)/(8r)},
\]
and, using $|d_{m}|\le 2(|m|+1)L$,
\[
\sum_{m\ne 0,1}\frac{|d_{m}|}{2r}\,e^{-L^{2}(|m|-1/2)/(8r)}
\;\le\;\frac{2L}{r}\sum_{j\ge1}(j+1)\,e^{-\lambda(j-1/2)/8},
\qquad \lambda:=\frac{L^{2}}{r}\ge1,
\]
the restriction $r\le L^{2}$ being exactly the range in which \eqref{eq:Nbound} is applied in Step~2,
where $r=t-s\le t\le L^{2}$. The last series is therefore bounded by the convergent
$\sum_{j\ge1}(j+1)e^{-(j-1/2)/8}$, uniformly in $r$ and $L$.

Retaining the term $m=0$, whose size is $\frac{L-x}{2r}e^{-(L-x)^{2}/(4r)}$, adding \eqref{eq:m1},
and using $L/r\le C(L-x)/r$ for $x\le\tfrac12 L$ to absorb the prefactor of the remaining sum, the
triangle inequality gives
\begin{equation}\label{eq:Nbound}
|N(r,x)|\;\le\;\frac{C\,(L-x)}{r^{3/2}}\,\exp\!\Big(-\frac{(L-x)^{2}}{4r}\Big),
\qquad 0<r\le L^{2},\quad 0\le x\le\tfrac12 L,
\end{equation}
with $C$ absolute.

\medskip
\emph{Step 2 (Duhamel and the optimal splitting of time)}.
By Lemma~\ref{lem:boundary-tail}, $|\beta(s)|\le Ce^{-(L-R_{0})^{2}/(4s)}$. Inserting this and
\eqref{eq:Nbound} into \eqref{eq:duhamel},
\begin{equation}\label{eq:zint}
|z(x,t)|\le C(L-x)\int_{0}^{t}\frac{1}{(t-s)^{3/2}}
\exp\!\Big(-\frac{(L-R_{0})^{2}}{4s}-\frac{(L-x)^{2}}{4(t-s)}\Big)ds.
\end{equation}
The exponent is bounded below by the Cauchy--Schwarz inequality: for $a,b\ge0$ and $0<s<t$,
\begin{equation}\label{eq:CS}
\frac{a^{2}}{s}+\frac{b^{2}}{t-s}\;\ge\;\frac{(a+b)^{2}}{t},
\end{equation}
with equality at $s=ta/(a+b)$. Indeed \eqref{eq:CS} is the Cauchy--Schwarz inequality applied to the
vectors $(a/\sqrt{s},\,b/\sqrt{t-s})$ and $(\sqrt{s},\,\sqrt{t-s})$. With $a=L-R_{0}$ and $b=L-x$,
\begin{equation}\label{eq:CSapplied}
\frac{(L-R_{0})^{2}}{4s}+\frac{(L-x)^{2}}{4(t-s)}\;\ge\;\frac{(2L-R_{0}-x)^{2}}{4t},
\qquad 0<s<t.
\end{equation}
This is the assertion that the cheapest Gaussian path from the origin to $x$ which touches $x=L$
costs $(2L-R_{0}-x)^{2}/4t$, the times $s$ and $t-s$ being allocated optimally between the outward
and the return leg.

Fix $\theta\in(0,1)$. Applying \eqref{eq:CSapplied} to the fraction $\theta$ of the exponent and
discarding the fraction $1-\theta$ of its first term,
\[
\exp\!\Big(-\frac{(L-R_{0})^{2}}{4s}-\frac{(L-x)^{2}}{4(t-s)}\Big)
\le \exp\!\Big(-\theta\,\frac{(2L-R_{0}-x)^{2}}{4t}\Big)
\exp\!\Big(-(1-\theta)\,\frac{(L-x)^{2}}{4(t-s)}\Big),
\]
so that, substituting $r=t-s$ and then $r=(L-x)^{2}\sigma$,
\[
\int_{0}^{t}\frac{1}{(t-s)^{3/2}}\exp\!\Big(-(1-\theta)\frac{(L-x)^{2}}{4(t-s)}\Big)ds
\;\le\;\int_{0}^{\infty}\frac{e^{-(1-\theta)(L-x)^{2}/(4r)}}{r^{3/2}}\,dr
=\frac{c_{\theta}}{L-x},
\]
with $c_{\theta}=\int_{0}^{\infty}\sigma^{-3/2}e^{-(1-\theta)/(4\sigma)}\,d\sigma<\infty$ for
$\theta<1$. Combining with \eqref{eq:zint}, the factors $L-x$ cancel and \eqref{eq:zbound} follows.
\end{proof}

\begin{remark}\label{rem:optimality}
The exponent in \eqref{eq:zbound} is optimal: as $L\to\infty$ with $x=\xi\sqrt t$ and
$t=cL^{2}$ fixed, $(2L-R_{0}-x)^{2}/4t\to (2-\xi\sqrt{c})^2/4c$ which equals $1/c$ at $\xi=0$, matching the lower bound of Theorem
\ref{thm:sharp}(ii). It is the exponent of the image charge at $x=2L$, and
\eqref{eq:CS} is the statement that the cheapest Gaussian path from the origin to $x$ that
touches $x=L$ costs $(2L-x)^{2}/4t$, the times $s$ and $t-s$ being allocated optimally
between the outward and return legs.
\end{remark}

\begin{lemma}\label{lem:moderate-c}
Let $z=w^L-w^\infty$.  Fix  $R>0$, and $j\in\mathbb N_0$.
There is $C=C(R,j,\|u_0\|_{L^1},R_0)$ such that, whenever
$0< c<R^{-2}$, $t=cL^2$, and $L>R_0$,
\[ t^{j/2}\|\partial_x^jz(\cdot,t)\|_{L^\infty(0,L)}\le C.
\]
\end{lemma}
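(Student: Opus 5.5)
The plan is to bound $\partial_x^j w^{L}$ and $\partial_x^j w^{\infty}$ separately, each by $C\,t^{-j/2}$, and then use the triangle inequality. Cancellation inside $z$ is not needed here, since the lemma only asks for boundedness. The exponential smallness of $z$ is supplied separately by Lemma~\ref{lem:image}.

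\emph{The case $j=0$.} Here the claim follows from the maximum principle. By \eqref{eq:hatwlower} we have $e^{-\|u_0\|_{L^1}}\le w^{L}\le e^{\|u_0\|_{L^1}}$. The same bounds hold for $w^{\infty}$, by the argument of Theorem~\ref{TI} together with \eqref{eq:w0bounds}. Hence $|z|\le 2e^{\|u_0\|_{L^1}}$.

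\emph{The case $j\ge1$.} We work with the first derivatives $h^{L}:=w^{L}_x$ and $h^{\infty}:=w^{\infty}_x$.
\begin{itemize}
\item As in the proof of Theorem~\ref{TI}, the Dirichlet data of $w^{L}$ are constant in time, so $w^{L}_{xx}=w^{L}_t=0$ at $x=0$ and at $x=L$. Thus $h^{L}$ solves the heat equation on $(0,L)$ with homogeneous Neumann conditions. Its initial datum is $w_0u_0$, which is supported in $[0,R_0]$ and satisfies $\|w_0u_0\|_{L^1}\le e^{\|u_0\|_{L^1}}\|u_0\|_{L^1}$. We identify $h^{L}$ with $w^{L}_x$ through the uniqueness argument in the proof of Theorem~\ref{T1}.
\item Likewise, $h^{\infty}$ solves the Neumann problem on the half-line with the same datum.
\end{itemize}
Then $\partial_x^j z=\partial_x^{j-1}(h^{L}-h^{\infty})$. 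It therefore suffices to show that the $(j-1)$-th $x$-derivative of each Neumann kernel is bounded by $C\,t^{-j/2}$, uniformly in the source point. The $L^1$ norm of the datum then gives the claim.

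\emph{The half-line kernel.} This is the even reflection $(4\pi t)^{-1/2}\big(e^{-(x-y)^2/4t}+e^{-(x+y)^2/4t}\big)$. Writing $\partial_x^{m}e^{-s^2/4t}=(2\sqrt t)^{-m}(-1)^mH_m\big(s/(2\sqrt t)\big)e^{-s^2/4t}$, as in the proof of Lemma~\ref{lem:gaussder}, and using that $|H_m(\sigma)|e^{-\sigma^2}$ is bounded on $\mathbb R$, each term is $O(t^{-(m+1)/2})$. With $m=j-1$ this is $O(t^{-j/2})$.

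\emph{The interval kernel.} By the method of images, the Neumann kernel of $(0,L)$ is
\[
(4\pi t)^{-1/2}\sum_{k\in\mathbb Z}\Big(e^{-(x-y+2kL)^2/4t}+e^{-(x+y+2kL)^2/4t}\Big).
\]
Differentiating termwise, each summand is bounded by
\[
C_m\,t^{-(m+1)/2}\,\big(1+|\sigma_k|\big)^{m}e^{-\sigma_k^2},\qquad \sigma_k=\frac{x\pm y+2kL}{2\sqrt t}.
\]
The points $\sigma_k$ form two lattices of spacing $L/\sqrt t=c^{-1/2}>R$. The sum over $k$ of $(1+|\sigma_k|)^me^{-\sigma_k^2}$ is therefore bounded by a constant times $1+\sqrt t/L\le 1+R^{-1}$. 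This is a Riemann-sum comparison with $\int(1+|\sigma|)^me^{-\sigma^2}\,d\sigma$, plus one maximal term per lattice. Hence the kernel bound is $C(R,m)\,t^{-(m+1)/2}$, uniformly in $x,y\in(0,L)$. Taking $m=j-1$ and integrating against $w_0u_0$ gives
\[
\|\partial_x^{j}w^{L}(\cdot,t)\|_{L^\infty(0,L)}\le C\,t^{-j/2}.
\]

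\emph{Main obstacle.} The main step is this uniformity of the image sum. For $t\gg L^2$ the Neumann kernel does not decay like $t^{-1/2}$; it tends to $1/L$. The hypothesis $c<R^{-2}$, which gives $\sqrt t/L<R^{-1}$, is exactly what keeps the number of effective images bounded. This is why $C$ depends on $R$. The restriction $L>R_0$ is used only to guarantee that the support of $w_0u_0$ lies in $(0,L)$. Combining the two derivative bounds with the triangle inequality gives $t^{j/2}\|\partial_x^j z(\cdot,t)\|_{L^\infty(0,L)}\le C(R,j,\|u_0\|_{L^1},R_0)$.
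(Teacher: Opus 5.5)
Your proof is correct, but it organizes the kernel estimates differently from the paper.

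\textbf{The paper's route.} The paper works with $w$ itself. It writes $w^\infty-1$ against the Dirichlet half-line kernel, and $\hat w^L-W$ against the Dirichlet kernel of $(0,1)$. It uses only $L^\infty$ bounds on these data, supplied by the maximum principle. It therefore needs $L^1$-in-space bounds $\int|\partial^j G|\lesssim t^{-j/2}$ on $j$ derivatives of the kernel. For the interval it rescales to $(0,1)$ and splits the time range: images for $\tau\le1$, the spectral series for $\tau\ge1$. Converting back through $t^{j/2}L^{-j}=c^{j/2}$ gives the constant $\max\{1,R^{-j}\}$.

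\textbf{Your route.} You pass to $h=w_x$, which solves a Neumann problem with $L^1$ datum $w_0u_0$. This swaps the duality: $L^1$ on the data and pointwise bounds on only $j-1$ derivatives of the Neumann kernel. You then treat the interval directly in physical variables by counting images. The lattice spacing $L/\sqrt t=c^{-1/2}>R$ bounds the image sum by $C(1+R^{-1})$, with no spectral series and no splitting of time ranges. Your constant is uniform in $j$ and slightly better than the paper's $\max\{1,R^{-j}\}$ for $j\ge2$.

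\textbf{What each buys.} The price of your route is reliance on the identification of $w^L_x$ and $w^\infty_x$ with the Neumann evolutions of $w_0u_0$. This is available from the proofs of Theorems~\ref{T1} and~\ref{TI}. The paper's route avoids it and needs only the maximum-principle bound on the data.

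\textbf{One small imprecision.} For $m\ge1$, the function $(1+|\sigma|)^m e^{-\sigma^2}$ is not decreasing in $|\sigma|$. So ``one maximal term per lattice'' in your Riemann-sum comparison is not literally right. Apply the comparison to a radially decreasing majorant such as $C_m e^{-\sigma^2/2}$ instead; this changes only constants.
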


\begin{proof}
We bound $w^{\infty}$ and $w^{L}$ separately. Since
$w^{\infty}-1=\int_0^{\infty}\bigl(\Gamma(t,x-y)-\Gamma(t,x+y)\bigr)
(w_0(y)-1)\,dy$ with $\Gamma$ the Gaussian kernel and $|w_0-1|\le
2e^{\|u_0\|_{L^1}}$, the classical bound
$\int_{\mathbb R}|\partial_x^{m}\Gamma(t,s)|\,ds=C_mt^{-m/2}$ gives
$|\partial_x^{m}w^{\infty}(x,t)|\le C_mt^{-m/2}$ for $m\ge1$, all
$x\ge0$ and $t>0$.

For $w^{L}$, pass to domain variables
$\hat w^{L}(y,\tau)=w^{L}(Ly,L^{2}\tau)$ and let $W(y)=1+(e^{M}-1)y$.
Then $\hat w^{L}-W$ solves the homogeneous Dirichlet heat equation on
$(0,1)$ with initial datum bounded by $2e^{\|u_0\|_{L^1}}$ uniformly in
$L$, so the kernel bounds
$\int_0^1|\partial_y^{j}G_1(\tau,y,\zeta)|\,d\zeta\le C_j\min\{\tau,1\}^{-j/2}$
(images for $\tau\le1$, the spectral series for $\tau\ge1$) yield
$\|\partial_y^{j}\hat w^{L}(\cdot,\tau)\|_{L^{\infty}(0,1)}\le
C_j\min\{\tau,1\}^{-j/2}$, with $C_j$ independent of $L$. Since
$\partial_x^{j}w^{L}=L^{-j}\partial_y^{j}\hat w^{L}$ and
$t^{j/2}L^{-j}=c^{j/2}$, this gives
\begin{equation}\label{eq:wL-der}
t^{j/2}\|\partial_x^{j}w^{L}(\cdot,t)\|_{L^{\infty}(0,L)}
\le C_j\,c^{j/2}\min\{c,1\}^{-j/2}\le C_j\max\{1,R^{-j}\},
\qquad 0<c<R^{-2},
\end{equation}

The claim for $j\ge1$ follows by the triangle inequality; for $j=0$ it
is the maximum principle.
\end{proof}

\begin{teo}\label{thm:main-convergence}
Assume \eqref{eq:hyp}. Fix $R>0$, $k\in\mathbb N_{0}$ and $\varepsilon\in(0,1)$, and let
$t=cL^{2}$ with
\[
0<c<\frac{1}{R^{2}} .
\]
Then there exist $C_{k}=C_{k}(\|u_{0}\|_{L^{1}},R_{0},R,k,\varepsilon)>0$, independent of
$c$ and $L$, and $L_{0}=L_{0}(R_{0},R,\varepsilon)$, such that
\begin{equation}\label{ck}
\big\|v^{L}(\cdot,t)-v^{\infty}(\cdot,t)\big\|_{C^{k}([0,R])}
\le
C_{k}\,e^{-(1-\varepsilon)/c},
\qquad L\ge L_{0} .
\end{equation}
If $M\ne0$, the exponential constant $1$ is optimal in the following
uniform sense: no estimate of this form can hold with $1-\varepsilon$ replaced
by a fixed number larger than $1$.
\end{teo}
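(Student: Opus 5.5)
The plan is to work entirely at the Hopf--Cole level and transfer to $u=w_x/w$ through the quotient identity already used in Theorems~\ref{thm:Phi} and \ref{thm:halflineCk}. With $z=w^{L}-w^{\infty}$ we have
\[
u^{L}-u^{\infty}=\frac{z_{x}}{w^{L}}-\frac{w^{\infty}_{x}}{w^{L}w^{\infty}}\,z ,
\]
and both $w^{L},w^{\infty}$ are bounded above and below by $e^{\pm\|u_0\|_{L^1}}$, uniformly in $L$ and $t$. In similarity variables $\partial_\xi^j(v^L-v^\infty)=t^{(j+1)/2}\partial_x^j(u^L-u^\infty)$ at $x=\xi\sqrt t$. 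So it suffices to show, for $x\in[0,R\sqrt t]$ and $m\le k+1$,
\[
t^{m/2}|\partial_x^m z(x,t)|\le C\,e^{-(1-\varepsilon)/c}.
\]
We also need $t^{m/2}|\partial_x^m w^{L}|$, $t^{m/2}|\partial_x^m w^{\infty}|\le C$. The latter bounds are exactly Lemma~\ref{lem:moderate-c} and its proof, since $c<R^{-2}$. Leibniz and Fa\`a di Bruno then distribute one factor of $z$ (or of its derivatives) into every term, as in Theorem~\ref{thm:Phi}.

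For $m=0$, apply Lemma~\ref{lem:image} with $\theta$ close to $1$. Since $c<R^{-2}$ we have $x\le R\sqrt t=R\sqrt c\,L<L$. To stay in the range $x\le L/2$ of that lemma we first treat the case $R\sqrt c\le\delta$ for small $\delta$. There the exponent is $\theta(2L-R_0-x)^2/4t\ge\theta(2-\delta-R_0/L)^2/(4c)$, which exceeds $(1-\varepsilon)/c$ once $\theta,\delta$ are chosen in terms of $\varepsilon$ and $L\ge L_0(R_0,\varepsilon)$.

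In the complementary range $c\in[\delta^2/R^2,R^{-2})$, the target $e^{-(1-\varepsilon)/c}$ is bounded below by a positive constant depending only on $R,\varepsilon$. Lemma~\ref{lem:moderate-c} then gives the bound directly after adjusting $C_k$. This split makes the restriction $x\le L/2$ harmless.

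For derivatives I would not redo the kernel computation. Instead I would use interior parabolic estimates for $z$, which solves the heat equation on $(0,L)$ with $z(0,\cdot)=0$. Work on the parabolic box $[0,2R'\sqrt t]\times[t/2,t]$, which touches the boundary only at $x=0$, where the Dirichlet condition allows odd reflection. Heat-equation derivative estimates give
\[
t^{m/2}\sup_{[0,R\sqrt t]}|\partial_x^m z(\cdot,t)|\le C_m\sup_{\text{box}}|z|.
\]
Lemma~\ref{lem:image} bounds $\sup_{\text{box}}|z|$ with exponent $\theta(2L-R_0-2R'\sqrt t)^2/(4t)$ over $s\in[t/2,t]$; since $s\le t$, this is worst at $s=t$. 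In the small-$\delta$ regime this is again $\ge(1-\varepsilon)/c$.

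The main obstacle is bookkeeping the uniformity: all constants must be independent of $c,L$, which is why the case split on $R\sqrt c$ is needed. The box must stay inside $x\le L/2$, and since $t\le L^2$ Lemma~\ref{lem:image} applies.

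For optimality with $M\ne0$, suppose the bound held with an exponent $\gamma>1$. Fix a small $c$, let $L\to\infty$, and use Theorem~\ref{thm:Phi} together with Theorem~\ref{thm:halflineCk} (the latter gives $v^\infty(\cdot,cL^2)\to f_M$ at rate $O(L^{-1})$). This yields $|\Phi_c(0)-f_M(0)|\le Ce^{-\gamma/c}$, which contradicts the lower bound \eqref{eq:sharplower} of Theorem~\ref{thm:sharp}(ii) as $c\to0$.
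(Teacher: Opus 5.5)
Your proposal is correct and follows the paper's proof essentially step for step. It uses Lemma~\ref{lem:image} in a small-$c$ regime with Lemma~\ref{lem:moderate-c} absorbing the remaining range. It uses interior heat estimates with odd reflection at $x=0$ for the derivatives, the quotient identity with Leibniz/Fa\`a di Bruno for the transfer to $u$, and the $L\to\infty$ limit against Theorem~\ref{thm:sharp}(ii) for optimality.

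One small bookkeeping point should be added. The small-$c$ regime must also impose $c\le1$, as the paper's $c_2=\min\{1,\dots\}$ does: for small $R$, the condition $R\sqrt c\le\delta$ alone does not guarantee $t\le L^2$, which Lemma~\ref{lem:image} requires. The excluded range is harmless, since it is again absorbed by Lemma~\ref{lem:moderate-c}.
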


\begin{proof}
Let $z=w^{L}-w^{\infty}$.

\emph{Step 1 ($L^{\infty}$ bound).} Let $\varepsilon\in(0,1)$ and choose
$\theta=\theta(\varepsilon)\in(0,1)$ and $c_{2}=c_{2}(R,\varepsilon)>0$ as follows. Take
$L_{0}$ so large that $R_{0}/L\le\varepsilon/8$ for $L\ge L_{0}$, and restrict first to
$c\le c_{2}:=\min\{1,\varepsilon^{2}/[256(1+R)^{2}]\}$, so that $R\sqrt c\le\varepsilon/8$. Then, for
$0\le x\le R\sqrt t$ and $t=cL^{2}\le L^{2}$,
\[
\frac{(2L-R_{0}-x)^{2}}{4t}
\ \ge\ \frac{L^{2}\big(2-\varepsilon/8-\varepsilon/8\big)^{2}}{4t}
\ \ge\ \frac{(1-\varepsilon/4)}{c},
\]
using $(2-\varepsilon/4)^{2}/4\ge1-\varepsilon/4$. Choosing
$\theta:=(1-\varepsilon)/(1-\varepsilon/4)\in(0,1)$, Lemma \ref{lem:image} applies, since
$t\le L^{2}$ and $x\le R\sqrt t\le\tfrac12 L$ for $L\ge L_{0}$, and gives
\begin{equation}\label{eq:zsup}
\sup_{0\le x\le R\sqrt t}|z(x,t)|\le C_{\theta}\,e^{-(1-\varepsilon)/c},
\end{equation}
with $C_{\theta}$ independent of $c$ and $L$. For
$c\in[c_{2},1/R^{2})$, Lemma~\ref{lem:moderate-c} gives every derivative bound
needed below, uniformly up to both endpoints.  Since
$e^{-(1-\varepsilon)/c}$ is bounded below by
$e^{-(1-\varepsilon)/c_2}$ in this range, those bounds are absorbed into the
constant in \eqref{ck}.

\emph{Step 2 (derivative estimates).} Fix $0\le x\le R\sqrt t$ and set
\[
\rho:=\delta\sqrt t,
\qquad
\delta:=\min\Big\{\frac14,\ \frac{1-R\sqrt c}{2\sqrt c}\Big\}>0,
\]
which is positive precisely because $c<1/R^{2}$. Using $\sqrt t=\sqrt c\,L$,
\begin{equation}\label{eq:geom}
L-R\sqrt t-\rho=L\big(1-R\sqrt c-\delta\sqrt c\big)
\ge L\Big(1-R\sqrt c-\frac{1-R\sqrt c}{2}\Big)=\frac{1-R\sqrt c}{2}\,L>0,
\end{equation}
and $t-\rho^{2}\ge\tfrac{15}{16}t>0$. Let
$Q_{\rho}(x,t):=(t-\rho^{2},t)\times(x-\rho,x+\rho)$. By \eqref{eq:geom} the parabolic
cylinder $Q_{\rho}(x,t)\cap\big((0,L)\times(0,t)\big)$ lies above $\{s=0\}$ and its points
satisfy $x'\le R\sqrt t+\rho\le\tfrac12L$ in the small-$c$ regime.
We apply Lemma~\ref{lem:image} directly at every $(x',s)$ in this
cylinder, instead of invoking a similarity window scaled with a different time.
Decrease $c_2$, and increase $L_0$, so that
\[
 q:=\frac{R_0}{L}+(R+\tfrac14)\sqrt c\le\frac{\varepsilon}{4}
 \qquad(c\le c_2,\ L\ge L_0).
\]
Since $s\le t=cL^2$ and $x'/L\le(R+\tfrac14)\sqrt c$,
\[
 \frac{(2L-R_0-x')^2}{4s}
 \ge \frac{(2-q)^2}{4c}
 =\frac{1-q+q^2/4}{c}
 \ge\frac{1-\varepsilon/4}{c}.
\]
With the choice
$\theta=(1-\varepsilon)/(1-\varepsilon/4)$ already made in Step~1, this gives
\[
 \theta\frac{(2L-R_0-x')^2}{4s}\ge\frac{1-\varepsilon}{c}.
\]
All choices are independent of $c$ and $L$.  Hence
$\|z\|_{L^\infty(Q_\rho\cap((0,L)\times(0,t)))}
\le Ce^{-(1-\varepsilon)/c}$.
Since $z$ solves the heat equation exactly, the classical interior derivative estimates
for the heat kernel give
\begin{equation}\label{eq:zder}
|\partial_{x}^{j}z(x,t)|\le C_{j}\,\rho^{-j}\,
\|z\|_{L^{\infty}(Q_{\rho}\cap((0,L)\times(0,t)))}
\le C_{j}\,t^{-j/2}\,e^{-(1-\varepsilon)/c},
\qquad 0\le x\le R\sqrt t,
\end{equation}
where $\delta^{-j}$ has been absorbed into $C_{j}$. This is legitimate with $C_{j}$
independent of $c$: for $c\le c_{2}$ one has $R\sqrt c\le\varepsilon/8$, hence
$1-R\sqrt c\ge 1-\varepsilon/8\ge\tfrac12$ and therefore
$\delta=\tfrac14$, so that
$\delta^{-j}\le 4^{j}$ is bounded independently of $c$ and $L$; for
$c\in[c_{2},1/R^{2})$ the derivative estimate follows from
Lemma~\ref{lem:moderate-c}. For
$x$ within distance $\rho$ of $x=0$ the same bound follows by applying \eqref{eq:zder} to
the odd extension $\tilde z(x,s):=-z(-x,s)$, $x<0$, which solves the heat equation across
$x=0$ and has the same $L^{\infty}$ size, precisely because $z(0,s)=0$ for all $s$.

\emph{Step 3 (bounds on $w^{L}$, $w^{\infty}$).} By \eqref{eq:w0bounds}
the initial and boundary data of $w^{L}$ and $w^{\infty}$ lie in
$[e^{-\|u_{0}\|_{L^{1}}},e^{\|u_{0}\|_{L^{1}}}]$, so the maximum
principle gives
\begin{equation}\label{eq:wbounds2}
0<m_{0}:=e^{-\|u_{0}\|_{L^{1}}}\le w^{L},\,w^{\infty}
\le e^{\|u_{0}\|_{L^{1}}} .
\end{equation}
The derivative bounds established in the proof of
Lemma~\ref{lem:moderate-c} --- the Gaussian-kernel bound for
$w^{\infty}$ and \eqref{eq:wL-der} for $w^{L}$, the latter uniform in
$0<c<R^{-2}$ --- give
\begin{equation}\label{eq:wder}
|\partial_{x}^{m}w^{\infty}(x,t)|+|\partial_{x}^{m}w^{L}(x,t)|
\le C_{m}t^{-m/2},
\qquad 0\le x\le L,\ m\ge1,
\end{equation}
with $C_m$ independent of $c$ and $L$. Consequently, by Fa\`a di Bruno
together with \eqref{eq:wbounds2} and \eqref{eq:wder},
\begin{equation}\label{eq:invwder}
\Big|\partial_{x}^{b}\Big(\frac{1}{w^{L}}\Big)\Big|\le C_{b}t^{-b/2},
\qquad
\Big|\partial_{x}^{b}\Big(\frac{1}{w^{\infty}}\Big)\Big|\le C_{b}t^{-b/2},
\qquad b\ge0 .
\end{equation}

\emph{Step 4 (transfer to $u$).} Since $u=w_{x}/w$ and
$\frac{1}{w^{L}}-\frac{1}{w^{\infty}}=\frac{w^{\infty}-w^{L}}{w^{L}w^{\infty}}
=-\frac{z}{w^{L}w^{\infty}}$,
\begin{equation}\label{eq:key}
u^{L}-u^{\infty}
=\frac{z_{x}}{w^{L}}-\frac{w^{\infty}_{x}}{w^{L}w^{\infty}}\,z .
\end{equation}
Differentiating \eqref{eq:key} $k$ times and expanding by Leibniz, one obtains finitely
many terms, each carrying exactly one factor $\partial_{x}^{a}z$ with $0\le a\le k+1$, the
remaining $k+1-a$ derivatives being distributed among factors $w^{\infty}_{x}$, $1/w^{L}$
and $1/w^{\infty}$. By \eqref{eq:zder}, \eqref{eq:wder} and \eqref{eq:invwder} each such
term is bounded by
\[
C\,t^{-a/2}e^{-(1-\varepsilon)/c}\cdot t^{-(k+1-a)/2}
=C\,t^{-(k+1)/2}\,e^{-(1-\varepsilon)/c},
\]
the homogeneities adding to $a+(k+1-a)=k+1$. Summing,
\begin{equation}\label{eq:udiff}
|\partial_{x}^{k}(u^{L}-u^{\infty})(x,t)|
\le C_{k}\,t^{-(k+1)/2}\,e^{-(1-\varepsilon)/c},
\qquad 0\le x\le R\sqrt t .
\end{equation}
Since $\partial_{\xi}^{k}v=t^{(k+1)/2}\partial_{x}^{k}u$ by \eqref{eq:rescaled}, this is
\eqref{ck}.

It remains to justify the sharpness assertion, since
Theorem~\ref{thm:sharp}(ii) is stated for the limiting profiles.  Fix $c>0$ and
take $L\to\infty$.  Theorem~\ref{thm:Phi} gives
$v^L(0,cL^2)\to\Phi_c(0)$, while
Theorem~\ref{thm:halflinerate} gives
$v^\infty(0,cL^2)\to f_M(0)$.  Thus
\[
 \lim_{L\to\infty}|v^L(0,cL^2)-v^\infty(0,cL^2)|
 =|\Phi_c(0)-f_M(0)|.
\]
If a uniform estimate held with exponent $\gamma>1$, passage to this limit
would give $|\Phi_c(0)-f_M(0)|\le Ce^{-\gamma/c}$, contradicting
Theorem~\ref{thm:sharp}(ii) as $c\downarrow0$.
\end{proof}

\subsection{Two-scale asymptotics and comparison of regimes}\label{ss:regimes}

By Theorem \ref{thm:halflinerate}, for data satisfying \eqref{eq:hyp},
\begin{equation}\label{eq:hlrate}
\|v^{\infty}(\cdot,t)-f_{M}\|_{L^{\infty}(0,R)}\le C\,t^{-1/2},
\qquad t\ge1,
\end{equation}
with $C=C(\|u_{0}\|_{L^{1}},R_{0},R)$.

\begin{cor}\label{cp}
Assume \eqref{eq:hyp} and fix $R>0$, $p\in[1,\infty]$ and $\varepsilon\in(0,1)$. There is
$C=C(\|u_{0}\|_{L^{1}},R_{0},R,p,\varepsilon)$ such that, for all $L\ge L_{0}$ and all
$t\ge1$ with $c=t/L^{2}<1/R^{2}$,
\begin{equation}\label{ep}
\big\|v^{L}(\cdot,t)-f_{M}\big\|_{L^{p}(0,R)}
\le
C\,e^{-(1-\varepsilon)L^{2}/t}+C\,t^{-1/2} .
\end{equation}
In particular $v^{L}(\cdot,t)\to f_{M}$ in $L^{p}(0,R)$ along any sequence with
$t\to\infty$ and $t/L^{2}\to0$, that is throughout the window $1\ll t\ll L^{2}$; the two
terms balance at $t\asymp L^{2}/\ln L$, where the error is $O\big((\ln L)^{1/2}/L\big)$.
\end{cor}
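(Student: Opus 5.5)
The estimate \eqref{ep} follows from the triangle inequality
\[
\|v^{L}(\cdot,t)-f_{M}\|_{L^{p}(0,R)}\le\|v^{L}(\cdot,t)-v^{\infty}(\cdot,t)\|_{L^{p}(0,R)}+\|v^{\infty}(\cdot,t)-f_{M}\|_{L^{p}(0,R)},
\]
combined with $\|g\|_{L^{p}(0,R)}\le R^{1/p}\|g\|_{L^{\infty}(0,R)}$. This reduces each term to a sup-norm bound on $[0,R]$.

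\emph{First term.} This is Theorem~\ref{thm:main-convergence} with $k=0$. Since $c=t/L^{2}<1/R^{2}$ and $L\ge L_{0}$, that theorem gives
\[
\|v^{L}(\cdot,t)-v^{\infty}(\cdot,t)\|_{C^{0}([0,R])}\le C_{0}\,e^{-(1-\varepsilon)/c}=C_{0}\,e^{-(1-\varepsilon)L^{2}/t}.
\]
Here $C_{0}$ and $L_{0}$ are independent of $c$ and $L$. The corollary's $L_{0}$ is taken to be the one supplied by the theorem.

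\emph{Second term.} This is \eqref{eq:hlrate}, i.e.\ Theorem~\ref{thm:halflinerate}, valid for $t\ge1$ with a constant depending only on $\|u_0\|_{L^1}$, $R_0$ and $R$. Multiplying both constants by $R^{1/p}$ gives \eqref{ep}.

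\emph{Convergence.} Along any sequence with $t\to\infty$ and $t/L^{2}\to0$, eventually $c<1/R^{2}$ holds. Since $L^{2}/t\to\infty$, both terms tend to zero. One should also check that $L\ge L_0$ eventually: this holds because $L^{2}\ge t\to\infty$.

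\emph{Balance point.} Set $t=\alpha L^{2}/\ln L$. The first term becomes $e^{-(1-\varepsilon)\ln L/\alpha}=L^{-(1-\varepsilon)/\alpha}$, and the second is $t^{-1/2}=\alpha^{-1/2}(\ln L)^{1/2}/L$. Choosing $\alpha\le 1-\varepsilon$ makes the first term at most $L^{-1}$, which is dominated by the second. Hence the total error is $O((\ln L)^{1/2}/L)$. The heuristic balance $e^{-L^{2}/t}\asymp t^{-1/2}$ gives $L^{2}/t\approx\tfrac12\ln t\approx\ln L$, consistent with $t\asymp L^{2}/\ln L$.

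The only point needing care is the uniformity of constants with respect to $c$ in the first term. Theorem~\ref{thm:main-convergence} already provides this, so no further obstacle is expected.
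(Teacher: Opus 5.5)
Your proof is correct and follows the paper's argument exactly. You use the triangle inequality through $v^{\infty}$, bound the interval/half-line term by Theorem~\ref{thm:main-convergence} with $k=0$ and the relaxation term by \eqref{eq:hlrate}, then pass from sup norms to $L^{p}(0,R)$ with the factor $R^{1/p}$. Your explicit check of the balance point, taking $t=\alpha L^{2}/\ln L$ with $\alpha\le 1-\varepsilon$, is a slightly more careful rendering of the paper's one-line remark.
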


\begin{proof}
By the triangle inequality,
\[
\|v^{L}-f_{M}\|_{L^{p}(0,R)}
\le\|v^{L}-v^{\infty}\|_{L^{p}(0,R)}+\|v^{\infty}-f_{M}\|_{L^{p}(0,R)} .
\]
Theorem \ref{thm:main-convergence} with $k=0$ bounds the first term by
$R^{1/p}C_{0}e^{-(1-\varepsilon)/c}$, and \eqref{eq:hlrate} bounds the second by
$R^{1/p}Ct^{-1/2}$. Substituting $c=t/L^{2}$ gives \eqref{ep}, and the balance follows from
$e^{-(1-\varepsilon)L^{2}/t}=t^{-1/2}$.
\end{proof}

Estimate \eqref{ep} is a fixed-$(L,t)$ inequality whose two terms have distinct origins:
the first is the exponentially small cost of replacing the half-line by the interval at
the diffusive scale, the second the polynomial relaxation of the half-line dynamics toward
the universal profile. Together they exhibit the window $1\ll t\ll L^{2}$ in which $v^{L}$
is close to $f_{M}$.

\begin{remark}\label{rem:regimes}
Fix $R>0$ and write $c=t/L^{2}$.
\begin{enumerate}
\item \emph{Self-similar regime, $1\ll t\ll L^{2}$.} By Theorem \ref{thm:main-convergence},
$\|v^{L}(\cdot,t)-v^{\infty}(\cdot,t)\|_{C^{k}([0,R])}\le C_{k}e^{-(1-\varepsilon)L^{2}/t}\to0$,
while $v^{\infty}(\cdot,t)\to f_{M}$ in $C^{k}_{\mathrm{loc}}$ by Theorem
\ref{thm:halflineCk}; hence $v^{L}(\cdot,t)\to f_{M}$. Corollary \ref{cp} quantifies the
window, and Theorem \ref{thm:sharp} shows the rate $e^{-L^{2}/t}$ is sharp.
\item \emph{Transition regime, $t\sim L^{2}$.} For each fixed $c>0$, Theorem \ref{thm:Phi}
gives $v^{L}(\cdot,cL^{2})\to\Phi_{c}$ in $C^{k}_{\mathrm{loc}}([0,c^{-1/2}))$, with
$\Phi_{c}$ explicit. The family tends to $f_M$ in similarity coordinates as $c\to0$
and to $\mathcal U_M$ only after the domain-scale rescaling as $c\to\infty$
(Corollary \ref{cor:c-to-zero} and Proposition \ref{prop:cinfty}), and its existence identifies
$t\sim L^{2}$ as the critical scale.
\item \emph{Stationary regime, $t\gg L^{2}$.} By Theorem \ref{expt1},
$\|u^{L}(\cdot,t)-U_{M}^{L}\|_{L^{\infty}(0,L)}\le C(L)\,e^{-\pi^{2}t/L^{2}}$ for $t\ge t_{0}$,
consistently with Proposition \ref{prop:cinfty}.
\end{enumerate}
The three regimes are encoded by the single family $\Phi_c$, with the qualification
that its $c\to\infty$ limit must be taken after domain-scale rescaling.
\end{remark}

\begin{remark}\label{rem:notmetastable}
The transient described here lasts $t\sim L^{2}$, the diffusive time of a domain of
diameter $L$: it is the natural time scale of the problem rather than an anomalously long
one, and the solution follows the half-line dynamics for exactly as long as the boundary
is out of diffusive reach. This is the principle of not feeling the boundary
\cite{Kac1951}, and the accurate description is Barenblatt's intermediate asymptotics
\cite{barenblatt1996}. It is distinct from the metastability of \cite{KT01,BW09},
where the small parameter is the viscosity and the transient exceeds the natural scale,
and from the exponentially slow, boundary-driven shock motion for Burgers on an interval
studied in \cite{ReynaWard1995,Kreiss1986,MasciaStrani2013}.
\end{remark}

\begin{remark}
The compact support assumption \eqref{eq:hyp} is made for simplicity. The results extend to
data with exponential decay, $|u_{0}(x)|\le Ce^{-\alpha x}$: the Gaussian tail estimate of
Lemma \ref{lem:boundary-tail} is replaced by the corresponding heat-kernel estimate
against an exponentially decaying datum, the layer in Lemma \ref{lem:hatw} has width
$O(\alpha^{-1}\log L/L)$, and the exponent $\gamma=1$ is unchanged provided
$\alpha^{-1}=o(L)$. For data with polynomial decay the exponent is dictated by the tail
rather than by the image charge.
\end{remark}

\section{\texorpdfstring{Robust interval conclusions for space-dependent diffusion}{Robust interval conclusions for space-dependent diffusion}}\label{sec:variable}

The explicit analysis of the previous sections rests on the constant-coefficient
heat kernel. In many models the diffusion is heterogeneous, and it is natural to
ask which of the phenomena described above --- the two attractors, the relaxation
rate, the diffusive transition --- survive when the diffusivity varies in space.
We consider
\begin{equation}\label{eq:varflux}
u_t=\partial_x\big(a(x)\,(u_x+u^2)\big),
\qquad (x,t)\in(0,L)\times(0,\infty),
\end{equation}
with the conservative boundary conditions
\begin{equation}\label{eq:varbc}
a(x)\,(u_x+u^2)=0 \quad\text{at } x=0,L,
\end{equation}
which, since $a>0$, read again $u_x+u^2=0$ at the endpoints and make the flux
vanish there, so that the mass $M=\int_0^L u\,dx$ is conserved. Throughout this
section $a$ is smooth and uniformly elliptic,
\begin{equation}\label{eq:aelliptic}
0<a_0\le a(x)\le a_1<\infty .
\end{equation}
This is the only assumption used for the interval results below, which therefore
hold for any diffusivity satisfying \eqref{eq:aelliptic}, including one varying on
the scale of the domain. The half-line discussion at the end of the section
requires in addition that $a$ settle to a constant at infinity, in the spirit of
Duro and Zuazua \cite{DuroZuazua1999},
\begin{equation}\label{eq:ainfty}
|a(x)-a_\infty|\le C(1+x)^{-\delta},
\qquad
|a_x(x)|\le C(1+x)^{-\delta-1},
\qquad a_\infty>0,\ \delta>0;
\end{equation}
it is introduced there and not before.

The flux in \eqref{eq:varflux} is the one compatible with the Hopf--Cole
transformation: setting $v=\int_0^x u$ and $w=e^v$, the boundary conditions
become $w(0,t)=1$, $w(L,t)=e^M$ as in \eqref{ebw}, and a direct computation gives
\begin{equation}\label{eq:varheat}
w_t=a(x)\,w_{xx},
\qquad (x,t)\in(0,L)\times(0,\infty),
\end{equation}
the heat equation with variable diffusivity, in non-divergence form, with the
same constant Dirichlet data as before. Since \eqref{eq:varheat} is uniformly
parabolic with smooth coefficient, for continuous positive $w_0$ it has a unique
classical solution, positive by the maximum principle, and $u=w_x/w$ is the
unique solution of \eqref{eq:varflux}--\eqref{eq:varbc} in the class of
Section~\ref{FI}, with $\int_0^L u\,dx=M$ for all $t>0$; the proof of
Theorem~\ref{T1} applies verbatim, the coefficient $a$ entering only through the
linear theory.


A stationary solution of \eqref{eq:varflux} has constant flux,
$a(x)\,(U_x+U^2)\equiv\kappa$. Evaluating \eqref{eq:varbc} at $x=0$ gives
$\kappa=0$, so $a(x)(U_x+U^2)\equiv0$ throughout and, since $a>0$,
\[
U_x+U^2=0 \quad\text{on all of }(0,L),
\]
independently of $a$. The unique stationary state of mass $M$ is therefore again
\eqref{UM}. This propagation from the boundary to the interior is special to the
stationary problem, where the flux is constant in $x$; the time-dependent
solution does feel $a$ in the interior. Thus the geometry of the diffusion
affects the approach to equilibrium, but not the equilibrium itself.

\subsection{Relaxation rate: the principal eigenvalue}

The rate of approach to $U_M^L$ is set by the spectral gap of the operator
$a(x)\partial_{xx}$ with Dirichlet conditions. It is symmetric in the weighted space
$L^2\big((0,L),a^{-1}dx\big)$, the weight cancelling the coefficient so that two
integrations by parts move $\partial_{xx}$ from $\phi$ to $\psi$, the boundary
terms vanishing by the Dirichlet condition. Its principal eigenvalue
$\lambda_1(a)$, the least $\lambda$ for which $a\phi''+\lambda\phi=0$,
$\phi(0)=\phi(L)=0$ admits a nontrivial solution, has the variational
characterization
\begin{equation}\label{eq:rayleigh}
\lambda_1(a)
=\inf_{\substack{\phi\in H^1_0(0,L)\\ \phi\ne0}}
\frac{\displaystyle\int_0^L (\phi_x)^2\,dx}
     {\displaystyle\int_0^L \phi^2\,a^{-1}\,dx}\,.
\end{equation}
Since the numerator does not involve $a$ and the denominator is monotone in
$a^{-1}$, \eqref{eq:aelliptic} yields
\begin{equation}\label{eq:ev-bounds}
\frac{\pi^2}{L^2}\,a_0\ \le\ \lambda_1(a)\ \le\ \frac{\pi^2}{L^2}\,a_1,
\end{equation}
with equality when $a$ is constant, recovering $\lambda_1=\pi^2/L^2$ of
Theorem~\ref{expt1}.

\begin{teo}\label{thm:var-relax}
Let $u_0\in L^1(0,L)$ have mass $M$ and let $u$ solve
\eqref{eq:varflux}--\eqref{eq:varbc}. For every $p\in[1,\infty]$ there is
$C=C(\|u_0\|_{L^1},a,L,p)>0$ such that
\begin{equation}\label{eq:var-relax}
\|u(\cdot,t)-U_M^L\|_{L^p(0,L)}
\le C\,\bigl(1+t^{-3/4}\bigr)\,e^{-\lambda_1(a)\,t},
\qquad t>0 .
\end{equation}
\end{teo}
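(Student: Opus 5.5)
The plan follows the proof of Theorem~\ref{expt1} verbatim, replacing the Dirichlet Laplacian by the operator $\mathcal A:=a(x)\partial_{xx}$ on $L^2\big((0,L),a^{-1}dx\big)$.

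With $w=e^{v}$ as above, $w$ solves \eqref{eq:varheat} with $w(0,t)=1$ and $w(L,t)=e^M$. The stationary solution is again the affine $W(x)=1+(e^M-1)x/L$, since $aW_{xx}=0$, and $U_M^L=W_x/W$. Set $z:=w-W$. Then $z_t=a z_{xx}$ with homogeneous Dirichlet data and datum $z(\cdot,0)=w_0-W\in L^\infty$. The maximum principle for the non-divergence equation gives the same two-sided bounds \eqref{eq:wbounds} on $w$, because the data lie in the same range. Exactly as before,
\[
u-U_M^L=\frac{z_x}{w}-\frac{W_x}{wW}\,z ,
\]
so it suffices to show
\[
\|\partial_x^j z(\cdot,t)\|_{L^\infty(0,L)}\le C\bigl(1+t^{-(2j+1)/4}\bigr)e^{-\lambda_1(a)t},\qquad j=0,1 .
\]
The conclusion then follows from $\|\cdot\|_{L^p}\le L^{1/p}\|\cdot\|_{L^\infty}$.

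For the spectral decay, $\mathcal A$ is self-adjoint and nonpositive in the weighted space $H_a:=L^2(a^{-1}dx)$, with form domain $H^1_0$ and form $\int z_x^2\,dx$. It has compact resolvent and first eigenvalue $\lambda_1(a)$ given by \eqref{eq:rayleigh}. Hence $\|z(t)\|_{H_a}\le e^{-\lambda_1(a)t}\|z(0)\|_{H_a}$. By \eqref{eq:aelliptic}, the $H_a$ and $L^2$ norms are equivalent.

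For the passage to $L^\infty$ and to derivatives, I split at $t=1$, as in \eqref{eq:zsmoothing}. For $t\ge1$, I write $z(t)=e^{(t-1)\mathcal A}z(1)$ and expand in the orthonormal eigenbasis $\{\phi_n\}$ of $\mathcal A$ in $H_a$. Each $\phi_n$ solves $\phi_n''=-\lambda_n a^{-1}\phi_n$, so $\|\phi_n\|_{C^1}\le C(1+\lambda_n)$. Weyl asymptotics give $\lambda_n\sim cn^2$. Since the coefficients carry the factor $e^{-\lambda_n(t-1)}$, the series converges in $C^1$ for $t\ge1$ and is bounded by $Ce^{-\lambda_1(a)t}\|z(0)\|_{L^2}$. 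The same $C^1$ bound can alternatively be obtained from the energy identities $\frac{d}{dt}\int z_x^2=-2\int a z_{xx}^2$ together with the Sobolev embedding.

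For $0<t\le1$, I need the smoothing estimates
\[
\|z(t)\|_{L^\infty}\le Ct^{-1/4}\|z_0\|_{L^2},\qquad \|z_x(t)\|_{L^\infty}\le Ct^{-3/4}\|z_0\|_{L^2}.
\]
I expect this short-time step to be the main obstacle, because the explicit image kernel of the constant-coefficient case is no longer available. I would first try energy estimates in $H_a$. From $\frac{d}{dt}\|z\|_{H_a}^2=-2\|z_x\|^2$ one gets $t\|z_x(t)\|^2\le C\|z_0\|^2$. Differentiating $\|z_x\|^2$ gives $t^2\|z_{xx}(t)\|^2\le C\|z_0\|^2$, using $z_t=az_{xx}$ and $a\ge a_0$. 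The one-dimensional Gagliardo--Nirenberg inequalities
\[
\|z\|_\infty\le C\|z\|^{1/2}\|z_x\|^{1/2},\qquad \|z_x\|_\infty\le C\|z_x\|^{1/2}\|z_{xx}\|^{1/2}
\]
(the latter is valid here, the mean of $z_x$ being zero) then yield exactly $t^{-1/4}$ and $t^{-3/4}$. If the energy computation produces boundary terms, I would fall back on Gaussian bounds for the fundamental solution of the uniformly parabolic operator $\partial_t-a\partial_{xx}$ with smooth coefficient.

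Combining the two time regimes, the bound on $\|u(\cdot,t)-U_M^L\|_{L^p}$ takes the form \eqref{eq:var-relax}. The constants depend on $L$, on $a$ (through $a_0$, $a_1$ and finitely many derivatives), and on $\|u_0\|_{L^1}$ through \eqref{eq:wbounds} and $\|z_0\|_{L^2}$.
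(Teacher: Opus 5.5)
Your proposal is correct and follows essentially the paper's route: $z=w-W$, the spectral gap $\lambda_1(a)$ in $L^2(a^{-1}dx)$, the split at $t=1$ as in \eqref{eq:zsmoothing}, and the unchanged transfer to $u=w_x/w$. Where the paper only invokes ``parabolic estimates'' for the short-time smoothing, your weighted energy identities make them explicit, together with the one-dimensional Gagliardo--Nirenberg inequalities, and they use only \eqref{eq:aelliptic}; note that the time-pointwise bound on $t^2\|z_{xx}(t)\|^2$ also needs $\int a z_{xx}^2=\|z_t\|_{L^2(a^{-1}dx)}^2$ to be nonincreasing, which holds because $z_t$ solves the same Dirichlet problem. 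For $t\ge1$, the factor $e^{-\lambda_n(t-1)}$ alone does not justify $C^1$ convergence at $t=1$, so expand instead with the coefficients of $z_0$ and the factor $e^{-\lambda_n t}$; then Cauchy--Schwarz and $\lambda_n\ge a_0\pi^2n^2/L^2$ give $\|z(t)\|_{C^1}\le Ce^{-\lambda_1(a)t}\|z_0\|_{L^2}$.
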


\begin{proof}
The proof of Theorem~\ref{expt1} carries over once $\pi^2/L^2$ is replaced by
$\lambda_1(a)$. With $z:=w-W$, $W(x)=1+(e^M-1)x/L$, expanding $z(\cdot,0)$ in
the eigenbasis of $a\partial_{xx}$ gives decay at the rate $e^{-\lambda_1(a)t}$
in $L^2(a^{-1}dx)$, a norm equivalent to $L^2$ by \eqref{eq:aelliptic}. Since
\eqref{eq:varheat} is uniformly parabolic, the parabolic estimates give
\eqref{eq:zsmoothing} with $\lambda_1(a)$ in place of $\lambda_1$ and with $C$
depending also on $a_0,a_1,\|a\|_{C^1}$, the two time regimes being separated as
there. The transfer to $u=w_x/w$ is unchanged.
\end{proof}

The bounds \eqref{eq:ev-bounds} show that the diffusivity enters the relaxation
rate only through $\lambda_1(a)$, squeezed between the rates of the constant
diffusivities $a_0$ and $a_1$; a slower diffusion anywhere in the interval lowers
the gap, the effect being global, mediated by the principal eigenfunction, rather
than localized where $a$ is small.

\subsection{\texorpdfstring{Limits of the present argument on the half-line}{Limits of the present argument on the half-line}}

On the half-line the relevant regime is self-similar, and there the diffusivity
must be compared with a constant one; we assume \eqref{eq:ainfty}. For the constant
coefficient $a_\infty$, the self-similar profile of mass $M$ is obtained from $f_M$
by pure scaling: substituting $u=t^{-1/2}F(x/\sqrt t)$ in
$u_t=a_\infty(u_{xx}+(u^2)_x)$ gives
$-\frac12F-\frac12\xi F'=a_\infty(F''+(F^2)')$, solved by
\begin{equation}\label{eq:fMainfty}
f_M^{(a_\infty)}(\xi)
=\frac{1}{\sqrt{a_\infty}}\,f_M\!\Big(\frac{\xi}{\sqrt{a_\infty}}\Big),
\qquad \int_0^\infty f_M^{(a_\infty)}=M .
\end{equation}

For the variable coefficient the picture is best seen through $h:=w_x$, which by
\eqref{eq:varheat} solves the divergence-form equation
\begin{equation}\label{eq:hdiv}
h_t=(a(x)\,h_x)_x \qquad\text{on }(0,\infty),
\end{equation}
with the no-flux condition inherited at $x=0$ and conserved mass
$\int_0^\infty h(\cdot,0)=e^M-1$, exactly as in the constant-coefficient analysis
of Theorem~\ref{thm:halflinerate}. Under \eqref{eq:ainfty} the coefficient of
\eqref{eq:hdiv} settles to $a_\infty$, and \eqref{eq:hdiv} is then a
divergence-form diffusion with asymptotically constant coefficient, the setting of
Duro and Zuazua \cite{DuroZuazua1999} for the analogous whole-space problem.
This analogy suggests convergence toward $f_M^{(a_\infty)}$, but neither a
half-line convergence theorem nor a rate is needed for the results of this
paper.  We therefore do not formulate an unproved rate as part of the main
claims.

The boundary places \eqref{eq:hdiv} outside the whole-space statements of
\cite{DuroZuazua1999}: a proof would combine their scaling analysis with Gaussian
bounds for the Neumann heat kernel of the divergence-form operator
$\partial_x(a\,\partial_x)$ \cite{Aronson1968}, the point requiring care being the
uniformity of these bounds near the origin, where $a$ departs from $a_\infty$. For
slowly decaying coefficients ($\delta$ small) the tail of $a$ may itself limit the
rate. We leave this separate problem open.

The confinement mechanism does not survive in explicit form. The change of
variable $z(x)=\int_0^x ds/\sqrt{a(s)}$ turns \eqref{eq:varheat} into a heat
equation with a first-order drift $-\tfrac12(a_x/\sqrt a)\,W_z$ that is integrable
in $z$ under \eqref{eq:ainfty}, so that in these coordinates the interval $(0,L)$
has effective length $d(0,L)=\int_0^L ds/\sqrt{a(s)}$ and the Gaussian cost of
reaching the boundary and returning is measured in the metric $ds/\sqrt a$. But the
family $\Phi_c$, the sharp exponent and the matching lower bound of
Theorem~\ref{thm:sharp} all rest on the closed-form image representation of
Lemma~\ref{lem:poisson}, which is unavailable for variable $a$. The determination
of the sharp confinement constant, presumably a functional of the metric
$ds/\sqrt a$, is open.

Finally, the reduction above tames the coefficient only because \eqref{eq:ainfty}
forces $a$ to settle at infinity. The genuinely different case is a diffusivity
varying on the scale of the domain, $a(x)=\alpha(x/L)$ for a fixed profile
$\alpha$: the rescaled coefficient does not converge as $L\to\infty$, the
similarity structure is lost, and no self-similar profile is available. The
interval results of this section  (well-posedness, mass conservation, the
stationary state \eqref{UM}, and the relaxation rate \eqref{eq:var-relax})
already cover this regime, since they use only \eqref{eq:aelliptic}. What is open
is the transition: whether a family analogous to $\Phi_c$ exists, and whether the
confinement exponent is then a genuine functional of $\alpha$.

\section{Numerical illustration}\label{NS}

We illustrate the three regimes numerically, always with mass $M=1$ and initial
datum $u_0(x)=2(1-x)\mathbf 1_{[0,1]}(x)$, from which
$w_0(x)=\exp(\int_0^x u_0)$. The observation window is stated in each caption.

The simulations use the Hopf--Cole transformation to avoid discretizing the
nonlinear boundary conditions: we solve the linear heat equation for $w$ by an
explicit finite difference scheme and recover $u=w_x/w$. Since the data
$w(0,t)=1$, $w(L,t)=e^M$ are imposed exactly, the mass
$M=\ln w(L,t)-\ln w(0,t)$ is exact at the continuous level, and at the discrete
level it is conserved up to the $O(\Delta x^2+\Delta t)$ error of the
reconstruction of $u$. In higher dimensions, the absence of such a
transformation would require treating the nonlinear boundary condition
directly, which typically introduces mass loss.

For reproducibility: the same mesh $x_j=j\Delta x$ is used for every $L$, with
$\Delta x=0.02$ and $\mu=\Delta t/\Delta x^2=0.4$, within the stability
constraint $\mu\le1/2$; interior values are updated by
$w_j^{n+1}=w_j^n+\mu(w_{j+1}^n-2w_j^n+w_{j-1}^n)$ with $w_0^n=1$, $w_N^n=e^M$;
$w_x$ is approximated by second-order centered differences at interior nodes and
second-order one-sided differences at the endpoints; discrete supremum norms are
maxima over nodes in the stated window and discrete $L^p$ norms use composite
trapezoidal quadrature. Both $U_M^L$ and $f_M$ are known explicitly and serve as
references. Figure~\ref{fig4f2} uses the closed formulas and no half-line
truncation.

As a grid-refinement check, Table~\ref{tab:refinement} compares the
finite-difference solution at $L=20$, $t=0.5$ with the sine-series
representation of $w-W$ (500 modes), the supremum error being computed for
$u=w_x/w$ on the full interval and $\Delta t$ being in each run the largest
subdivision of $t$ not exceeding $0.4\Delta x^2$. The observed orders are close
to two, as expected. Raising the reference truncation to 750 and 1000 modes
changes the reference values by less than $10^{-15}$ at all nodes used.
\begin{table}[htbp]
\centering
\begin{tabular}{c c c c}
$\Delta x$ & $\Delta t$ & $\|u_{\Delta x}-u_{\rm ref}\|_{\infty}$ & observed order\\
\hline
$0.04$ & $6.3939\times10^{-4}$ & $4.4952\times10^{-4}$ & --\\
$0.02$ & $1.6000\times10^{-4}$ & $1.1666\times10^{-4}$ & $1.95$\\
$0.01$ & $4.0000\times10^{-5}$ & $2.9697\times10^{-5}$ & $1.97$
\end{tabular}
\caption{Grid refinement for the finite-difference reconstruction of $u$ at
$L=20$ and $t=0.5$.}
\label{tab:refinement}
\end{table}
\par\medskip

We first illustrate the sharp rate of Subsection~\ref{ss:sharp}.
\begin{figure}[htbp]
\centering
\def\svgwidth{1\linewidth}
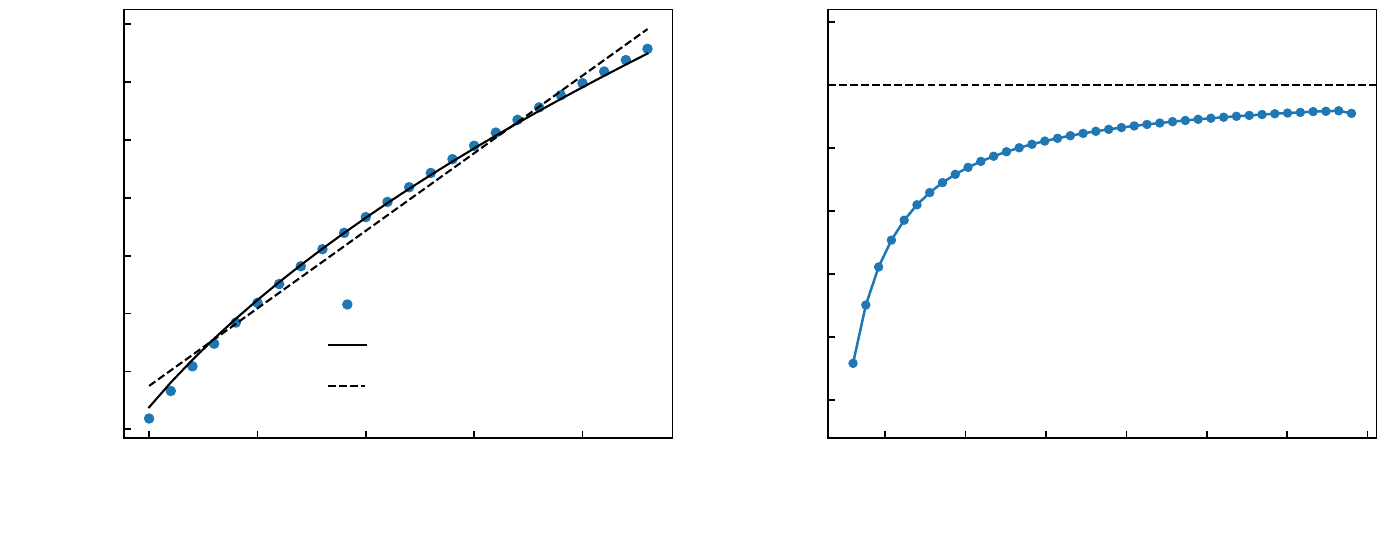
\caption{The sharp rate $\gamma=1$ at $M=1$, $R=1$, computed from the closed forms
of $\Phi_c$ and $f_M$.}
\label{fig4f2}
\end{figure}

Since $\Phi_c$ and $f_M$ are known in closed form, Figure~\ref{fig4f2} is
computed to machine precision, with no discretization; as $\Phi_c$ is the
$L\to\infty$ limit of $v^L(\cdot,cL^2)$ (Theorem~\ref{thm:Phi}), it displays the
transition in the limit that isolates the exponent. The right panel is decisive:
$-c\ln|\Phi_c(0)-f_M(0)|$, which Theorem~\ref{thm:sharp}(ii) asserts converges to
$1$, reaches $0.98$ by $1/c=34$. The left panel shows why a straight-line fit of
$\ln E$ against $1/c$ misleads: it returns a slope near $0.87$ that drifts with
the window, because the leading image contribution and the upper bound carry the
factor $e^{-1/c+R/\sqrt c-R^2/4}$, whose subexponential part $e^{R/\sqrt c}$
flattens the apparent slope. Removing it leaves $0.995$, in agreement with
$\gamma=1$ (Remark~\ref{rem:fit}).

The next simulation illustrates Corollary~\ref{cp}, where $c=t/L^2$ in
\eqref{ep}. Letting $c\to0$ is not by itself enough: the estimate is informative
only inside the window $1\ll t\ll L^2$, so $L$ and $t$ must grow together, with
$t=cL^2\to\infty$ and $c\to0$. Figure~\ref{fig2s} compares finite values of $L$
and $t$ and is a qualitative illustration of that separation of scales rather
than a test of the limit itself, since the smallest displayed times do not
satisfy $1\ll t$. It nevertheless shows the predicted ordering: profiles with
smaller $t/L^2$ stay closer to the half-line self-similar profile, and the
shorter intervals feel the remote boundary first.

\begin{figure}[htbp]
\centering
\def\svgwidth{1\linewidth}
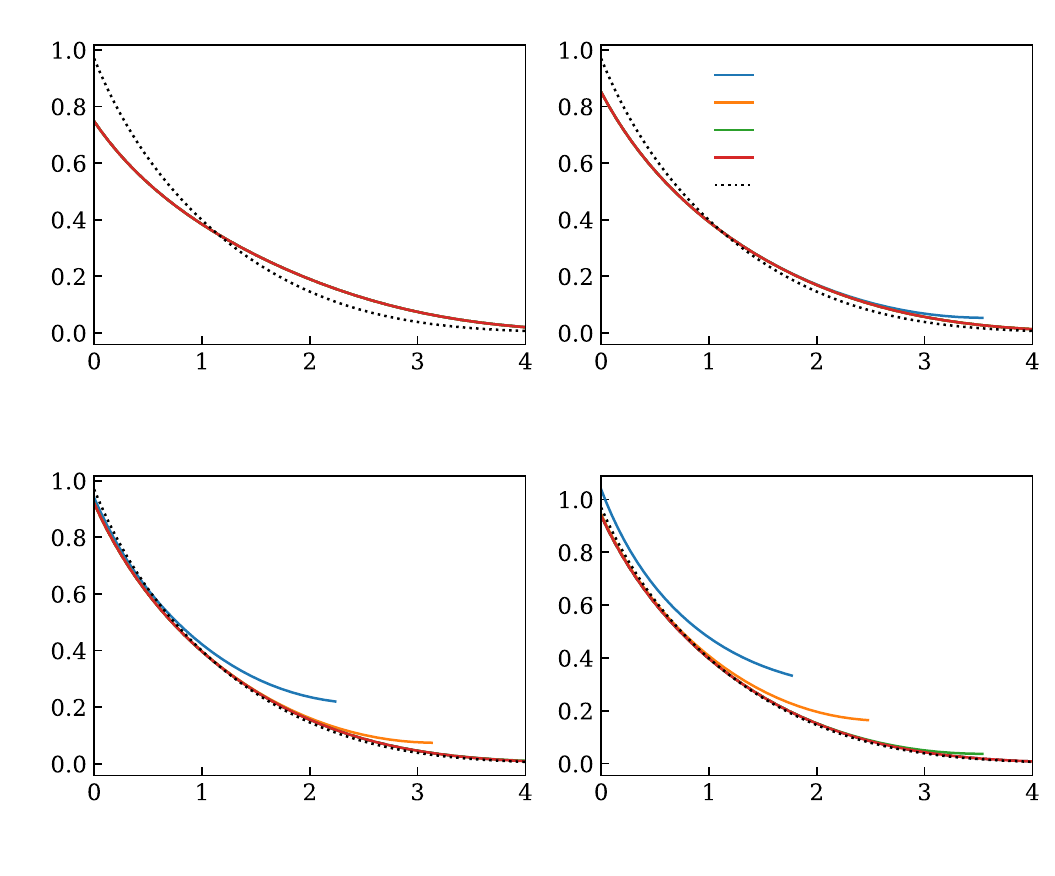
\caption{Profiles of $v^L(\xi,t)=\sqrt{t}\,u^L(\xi\sqrt{t},t)$ in similarity variables $\xi=x/\sqrt{t}$ for
$L\in\{5,7,10,20\}$ at times $t\in\{1,2,5,8\}$.}
\label{fig2s}
\end{figure}

At $t=1$ the four profiles are clustered, still under the influence of the
initial datum; at $t=2$ they have moved closer to $f_M$, and $v^5$ already
detaches, being the first to feel the boundary (its similarity interval is then
$0<\xi<5/\sqrt2\approx3.5$). At $t=5$ the deviation of $v^5$ is pronounced and
$v^7$ starts to follow, while $v^{10}$ and $v^{20}$ remain very close to $f_M$;
at $t=8$ so does $v^{10}$, whereas $v^{20}$ is still clustered with $f_M$.

Fixing $L=20$, we now let $t$ increase, expecting $u^L$ to approach the
self-similar profile first and the interval equilibrium afterwards. To make the
second convergence visible we return to the physical variable $x$, in which the
self-similar profile reads
$\tilde f_M(x,t)=t^{-1/2}f_M(x/\sqrt t)$.

\begin{figure}[htbp]
\centering
\def\svgwidth{1\linewidth}
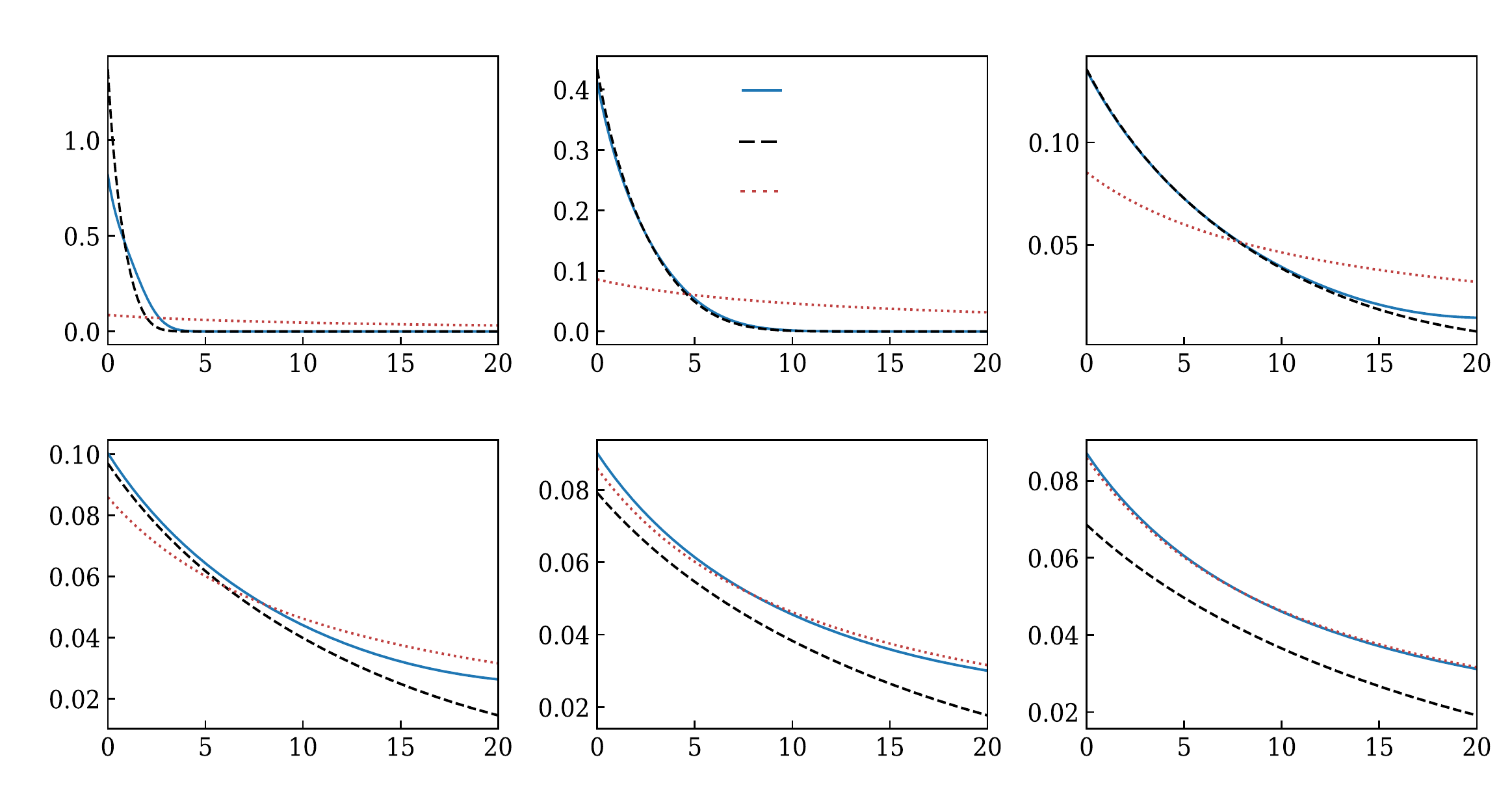
\caption{Evolution of $u^{20}$ ($u^L$, for $L=20$) from self-similar dynamics ($\tilde{f}_M$) to the stationary regime ($U_M^L$).}
\label{fig3b}
\end{figure}

At $t=0.5$ the profile is still dominated by the initial datum; by $t=5$ it is
very close to $\tilde f_M$, and it departs visibly from that diffusive regime
around $t=50$. The panel $t=100$ shows the transition phase between the two
regimes; by $t=150$ the solution is already near $U_M^L$, and at $t=200$ the two
curves are indistinguishable.

\section{\texorpdfstring{Conclusions and outlook}{Conclusions and outlook}}\label{multi}

We have resolved the transition between half-line and bounded-domain
asymptotics for the viscous Burgers equation with conservative boundary
conditions. At the critical scale $t=cL^2$, the rescaled solution converges to
an explicit family $\Phi_c$ connecting the nonlinear self-similar profile to
the nonconstant interval equilibrium. Before confinement dominates, the
remote-boundary correction is exponentially small, with sharp order
$e^{-L^2/t}$ and optimal constant $1$. The result therefore provides the four
elements of a complete crossover description: the relevant scale, the profile
governing the transition, its two limiting states, and the sharp rate at which
the boundary becomes visible.

The interval/half-line pair is the one-dimensional instance of a broader
truncation question: whether dynamics on bounded domains $\Omega_L$ shadows
that on an unbounded limit domain up to time $t\sim L^2$.  Cones are a natural
next setting because they preserve diffusive scaling.  The same one-dimensional
argument already extends to $(-L,L)$, with the whole line as limiting domain
\cite{evz1,ez}.

For a multidimensional viscous conservation law
\[
 u_t=\nabla\!\cdot(\nabla u+F(u)),\qquad
 (\nabla u+F(u))\cdot n=0,
\]
mass is still conserved, but the two tools responsible for the exact result
above disappear: there is generally no Hopf--Cole transformation, and heat
propagation in a cone is governed by the angular spectrum rather than a
one-dimensional image expansion.  Establishing a nonlinear crossover, and
identifying the geometric constant replacing $1$, therefore require methods
beyond those of this paper.

\section*{Acknowledgments}

M. Sonego has been partially supported by the Conselho Nacional de Desenvolvimento Científico e Tecnológico (CNPq), Grant/Award Number: 311893/2022-8; Fundação de Amparo à Pesquisa do Estado de Minas Gerais (FAPEMIG), Grant/Award Number: RED-00133-21 and APQ-02060-25.

E. Zuazua was funded by the Alexander von Humboldt Professorship program, the
ERC Advanced Grant CoDeFeL, Grant PID2023-146872OB-I00-DyCMaMod of MICIU
(Spain), COST Actions CA24136 (InterCoML) and CA24122 (mSPACE), supported by
COST (European Cooperation in Science and Technology), project AFOSR 24IOE027,
and SURE-AI Centre grant 357482 of the Research Council of Norway.

\section*{Data and code availability}

No external data were used in this study.  The numerical results are
generated from the initial datum, closed formulas, and finite-difference scheme
specified in Section~\ref{NS}.  The script
\texttt{numerical\_refinement\_v12.py}, supplied as a supplementary file,
reproduces Table~\ref{tab:refinement}.  The figure-generation code and the
resulting numerical arrays are available from the corresponding author upon
reasonable request and will be deposited in a public repository upon
acceptance.

\section*{Declaration of competing interest}

The authors declare that they have no known competing financial interests
or personal relationships that could have appeared to influence the work
reported in this paper.

\section*{Declaration of generative AI and AI-assisted technologies in the writing process}
During the preparation of this work the authors used ChatGPT (GPT-5, OpenAI) and Claude (Opus 5, Anthropic) to assist with drafting, restructuring, and proofreading the text, as well as generating and testing the illustrative scripts. All references were checked against the original sources. After using these tools, the authors reviewed and edited the content as needed and take full responsibility for the content of the publication.


\begin{thebibliography}{99}

\bibitem{Aronson1968}
D. G. Aronson,
\textit{Non-negative solutions of linear parabolic equations},
Ann. Scuola Norm. Sup. Pisa, Ser.~III, 22 (1968), no.~4, 607--694.

\bibitem{barenblatt1996}
G. I. Barenblatt,
\textit{Scaling, Self-similarity, and Intermediate Asymptotics},
Cambridge University Press, Cambridge, 1996.

\bibitem{BarenblattZeldovich1972}
G. I. Barenblatt and Ya. B. Zel'dovich,
\textit{Self-similar solutions as intermediate asymptotics},
Ann. Rev. Fluid Mech., 4:285--312, 1972.

\bibitem{BW09}
M. Beck and C. E. Wayne,
\textit{Using global invariant manifolds to understand metastability in the Burgers equation with small viscosity},
SIAM J. Appl. Dyn. Syst., 8(3):1043--1065, 2009.

\bibitem{BertiniPonsiglione2012}
L. Bertini and M. Ponsiglione,
\textit{A variational approach to the stationary solutions of the Burgers equation},
SIAM J. Math. Anal., 44(2):682--698, 2012.

\bibitem{Karch2000}
P. Biler and G. Karch,
\textit{A Neumann problem for a convection-diffusion equation on the half-line},
Ann. Polon. Math., 74:79--95, 2000.

\bibitem{Ciesielski1966}
Z. Ciesielski,
\textit{Heat conduction and the principle of not feeling the boundary},
Bull. Acad. Polon. Sci. S\'er. Sci. Math. Astronom. Phys., 14:435--440, 1966.

\bibitem{cole}
J. D. Cole,
\newblock On a quasi-linear parabolic equation occurring in aerodynamics,
\newblock {\em Quart. Appl. Math.}, 9 (1951), 225--236.




\bibitem{Davies1989}
E. B. Davies,
\textit{Heat Kernels and Spectral Theory},
Cambridge Tracts in Mathematics, vol.~92,
Cambridge University Press, Cambridge, 1989.

\bibitem{DuroZuazua1999}
G. Duro and E. Zuazua,
\textit{Large time behavior for convection-diffusion equations in $\mathbb{R}^n$ with asymptotically constant diffusion},
Commun. Partial Differential Equations, 24(7--8):1283--1340, 1999.

\bibitem{evz1}
M. Escobedo, J. L. Vázquez, and E. Zuazua,
\textit{Asymptotic behaviour and source-type solutions for a diffusion-convection equation},
Arch. Rational Mech. Anal., 124(1):43--65, 1993.

\bibitem{evz2}
M. Escobedo, J. L. Vázquez, and E. Zuazua,
\textit{Entropy solutions for diffusion-convection equations with partial diffusivity},
Trans. Amer. Math. Soc., 343(2):829--842, 1994.

\bibitem{ez}
M. Escobedo and E. Zuazua,
\textit{Long time behavior for convection-diffusion equations},
SIAM J. Math. Anal., 28(3):570--594, 1997.

\bibitem{ev}
L. C. Evans,
\textit{Partial Differential Equations},
2nd ed., Graduate Studies in Mathematics, vol.~19,
American Mathematical Society, Providence, RI, 2010.

\bibitem{Friedman1959}
A. Friedman,
\textit{Convergence of solutions of parabolic equations to a steady state},
J. Math. Mech., 8:57--76, 1959.

\bibitem{Friedman1961}
A. Friedman,
\textit{Asymptotic behavior of solutions of parabolic equations of any order},
Acta Math., 106:1--43, 1961.

\bibitem{Gushchin1984}
A. K. Gushchin,
\textit{On the uniform stabilization of solutions of the second mixed problem for a parabolic equation},
Math. USSR-Sb., 47(2):439--498, 1984.

\bibitem{hopf}
E. Hopf,
\newblock The partial differential equation $u_t + uu_x = \mu u_{xx}$,
\newblock {\em Comm. Pure Appl. Math.}, 3 (1950), 201--230.

\bibitem{Ilin1985}
A. M. Il'in,
\textit{A sufficient condition for the stabilization of the solution of a parabolic
equation}, Math. Notes, 37 (1985), no.~6, 466--469.

\bibitem{Kac1951}
M. Kac,
\textit{On some connections between probability theory and differential and integral equations},
in: Proceedings of the Second Berkeley Symposium on Mathematical Statistics and Probability,
University of California Press, Berkeley--Los Angeles, 1951, pp. 189--215.


\bibitem{KT01}
Y. J. Kim and A. E. Tzavaras,
\textit{Diffusive N-waves and metastability in the Burgers equation},
SIAM J. Math. Anal., 33(3):607--633, 2001.

\bibitem{Kreiss1986}
G. Kreiss and H.-O. Kreiss,
\textit{Convergence to steady state of solutions of Burgers' equation},
Appl. Numer. Math., 2:161--179, 1986.

\bibitem{Lieberman1996}
G. M. Lieberman,
\textit{Second Order Parabolic Differential Equations},
World Scientific, Singapore, 1996.

\bibitem{MasciaStrani2013}
C. Mascia and M. Strani,
\textit{Metastability for nonlinear parabolic equations with application to scalar viscous conservation laws},
SIAM J. Math. Anal., 45(5):3084--3113, 2013.

\bibitem{Mukminov1980}
F. Kh. Mukminov,
\textit{Stabilization of solutions of the first mixed problem for a parabolic equation of second order},
Math. USSR-Sb., 39(4):449--467, 1981.

\bibitem{ReynaWard1995}
L. G. Reyna and M. J. Ward,
\textit{On the exponentially slow motion of a viscous shock},
Comm. Pure Appl. Math., 48(2):79--120, 1995.

\bibitem{Ushakov1980}
V. I. Ushakov,
\textit{Stabilization of solutions of the third mixed problem for a second-order parabolic equation in a non-cylindrical domain},
Math. USSR-Sb., 39(1):87--105, 1981.

\bibitem{vandenBerg1989}
M. van den Berg,
\textit{Heat equation and the principle of not feeling the boundary},
Proc. Roy. Soc. Edinburgh Sect. A, 112:257--262, 1989.

\bibitem{Watanabe2016}
S. Watanabe, S. Matsumoto, T. Higurashi, and N. Ono,
\textit{Burgers equation with no-flux boundary conditions and its application for complete fluid separation},
Physica D: Nonlinear Phenomena, 331:1--12, 2016.

\end{thebibliography}
\end{document}